\documentclass[11pt]{article}

\usepackage[margin=2.4cm]{geometry}
\usepackage{amssymb}
\usepackage{bm}
\usepackage{graphicx}
\usepackage{graphics}
\usepackage{psfrag}
\usepackage{amsmath}
\usepackage{amscd}
\usepackage{amsfonts}
\usepackage{float}
\usepackage{latexsym}
\usepackage{lscape}
\usepackage{color}
\usepackage{multirow}
\usepackage{lscape}
\usepackage{arydshln}
\usepackage{epstopdf}
\usepackage{extarrows}

\usepackage{verbatim}

\usepackage{graphicx}
\graphicspath{{./images}}
\usepackage{amsmath,amsthm,latexsym,amsfonts,amssymb,mathrsfs}

\usepackage{cancel,soul,ulem}
\usepackage{float}
\usepackage{verbatim}
\usepackage{booktabs}

\usepackage{caption}
\usepackage[labelformat=empty]{subfig}

\newcommand{\ds}{\displaystyle}

\newtheorem{theorem}{Theorem}
%

\newtheorem{lemma}{Lemma}[section]%
\newtheorem{remark}{Remark}[section]%
\newtheorem{assu}{Assumption}[section]

\numberwithin{equation}{section}

\title{Long-time error estimate and decay of finite element method to a generalized viscoelastic flow}

\author{
Yingwen Guo\thanks{School of Mathematics, Hohai University, Nanjing 210098, China. (Email: guoyingwen6@sina.com)}
\and Yinnian He\thanks{School of Mathematics and Statistics, Xi'an Jiaotong University, Xi'an 710049, China. (Email: heyn@mail.xjtu.edu.cn)}
\and Wenlin Qiu\thanks{School of Mathematics, Shandong University, Jinan 250100, China. (Email: wlqiu@sdu.edu.cn)}
\and Xiangcheng Zheng\thanks{Corresponding author. School of Mathematics, State Key Laboratory of Cryptography and Digital Economy Security, Shandong University, Jinan, 250100, China. (Email: xzheng@sdu.edu.cn)}
    }


\begin{document}

\maketitle

\begin{abstract}
This work analyzes the finite element approximation to a viscoelastic flow model, which generalizes the Navier-Stokes equation and Oldroyd's model by introducing the tempered power-law memory kernel. We prove  regularity and long-time exponential decay of the solutions, as well as a long-time convolution-type Gr\"onwall inequality to support numerical analysis. A Volterra-Stokes projection is developed and analyzed to facilitate the parabolic-type duality argument, leading to the long-time error estimates and exponential decay of velocity and pressure. A benchmark problem of planar four-to-one contraction flow is simulated to substantiate the generality of the proposed model in comparison with the Navier-Stokes equation and Oldroyd's model.
\end{abstract}

\section{Introduction}

This work considers a generalized viscoelastic flow model  \cite{zg2022}
\begin{eqnarray}
\left\{
\begin{array}{@{}ll}
\vspace{4pt}
\mathbf{u}_t-\mu\Delta \mathbf{u}+(\mathbf{u}\cdot\nabla)\mathbf{u}-
\rho\mathcal Q_{\beta,\delta}*\Delta \mathbf{u}+\nabla
p=\mathbf{f},~(\mathbf{x},t)\in\Omega\times (0,\infty),\\
\vspace{4pt}
\mbox{div}\;\mathbf{u}=0,~(\mathbf{x},t)\in \Omega\times (0,\infty),\\
\mathbf{u}(\mathbf{x},0)=\mathbf{0},~\mathbf{x}\in \Omega;~~ ~\mathbf{u}(\mathbf{x},t)|_{\partial\Omega}=0,~~ t\in
[0,\infty).
\end{array}
\right. \label{eq3.1}
\end{eqnarray}
Here $\mathbf{x}\in\Omega$ for a two-dimensional convex polygonal domain $\Omega$ with boundary $\partial \Omega$, $\mathbf{u}=\mathbf{u}(\mathbf{x},t)=(u_1(\mathbf{x},t),
u_2(\mathbf{x},t))$ and $p=p(\mathbf{x},t)$ are the velocity and pressure of the fluid, respectively, and $\mathbf{f}=(f_1(\mathbf{x},t),f_2(\mathbf{x},t))$ is the external force. For simplicity, we omit $\mathbf x$ in the notations of the functions if no confusion occurs, e.g. we write $\mathbf{u}=\mathbf{u}(t)$. In model (\ref{eq3.1}) $\mu$ refers to the solvent viscosity, $\mathcal Q_{\beta,\delta}(t):=t^{-\beta}e^{-\delta t}$ is the tempered power-law kernel where $0\leq\beta<1$ and $\delta=1/\lambda_1$ with $\lambda_1$ being the relaxation time, and $\rho=(\mu/\lambda_1)(\lambda_1/\lambda_2-1)\geq0$ where $\lambda_2$ stands for the retardation time satisfying the restriction
$0<\lambda_2\leq\lambda_1$ \cite{He2003,sobolevskii}. The notation $*$ represents the convolution
$$ \mathcal Q_{\beta,\delta}*\Delta \mathbf{u}(t):=\int_0^t\mathcal Q_{\beta,\delta}(t-s)\Delta \mathbf{u}(s)ds.$$

When $\rho=0$, model (\ref{eq3.1}) degenerates to the standard Navier-Stokes equation, which has been widely studied, see e.g. \cite{CaiSun,Hil,Li,LiHe,Lub,She,Sul,te1}. For $\rho>0$ and $\beta=0$, i.e. the memory kernel is the exponential function $\mathcal Q_{0,\delta}(t)=e^{-\delta t}$, model (\ref{eq3.1}) becomes the equation of motion of a viscoelastic incompressible fluid arising from Oldroyd's model, and there exist sophisticated investigations on its numerical methods, see e.g. \cite{Bir,hg2022,Pani2005,Pani2006,Wang2011}. For $\rho>0$ and $0<\beta< 1$, an additional power function factor is included in the kernel to characterize the viscoelastic behavior of the fluids \cite{Die,Ko,Kre,Lin,Xu2}, which results in the so-called tempered power-law kernel \cite{Den,DenLi}. However, the power function component introduces singularities to the solutions and deteriorates nice properties of the exponential kernel, e.g. the variable-splitting property $e^{-\delta t}=e^{-\delta s}e^{-\delta (t-s)}$ for $0\leq s\leq t$ and the high-order continuous differentiability, that play key roles in analyzing the long-time behavior of solutions and error estimates, see e.g. \cite{He2008,Pani2006,Wang2012}. The \cite{zg2022} proves some exponential decay results of the solutions to (\ref{eq3.1}) and its convergence to the steady state.  To our best knowledge, numerical methods for model (\ref{eq3.1}) remain untreated, which motivates the current study.

 This work analyzes the finite element approximation to model (\ref{eq3.1}). We first prove the regularity and  long-time exponential decay of the solutions in stronger norms than those in \cite{zg2022}, and a new long-time Gr\"onwall inequality related to the kernel $\mathcal Q_{\beta,\delta}(t)$ (cf. Lemma \ref{lemgenegron}) is derived to support numerical analysis. Furthermore, a  Volterra-Stokes projection is developed and analyzed (cf. Lemma \ref{l5.4}) to facilitate the parabolic-type duality argument, leading to the long-time $L^2$ and $H^1$ error estimates of the velocity  and the long-time $L^2$ error estimates of the pressure and their exponent decay.

The rest of the work is organized as follows: In Section \ref{sec2} we introduce notations and preliminary results to be used subsequently. In Section \ref{sec3} we prove regularity results of the solutions to the model (\ref{eq3.1}). Finite element  scheme for model (\ref{eq3.1}) and auxiliary estimates are introduced in Section \ref{sec4}, based on which we analyze the velocity error and the pressure error in Sections \ref{sec5} and \ref{sec6}, respectively. Numerical simulations are performed to substantiate the error estimates, the exponential convergence and the flow behavior of the model (\ref{eq3.1}) in comparison with the Navier-Stokes equation and Oldroyd's model in Section \ref{sec7}.

\section{Preliminaries}\label{sec2}
Let
$
X=H^1_0(\Omega)^2$, $Y=L^2(\Omega)^2$ and
$M=L^2_0(\Omega)=\{q\in L^2(\Omega);$ $  \int_{\Omega}qdx=0\}$
where $L^2(\Omega)^d$~($d=1,2$) is equipped
with the usual $L^2$-scalar product $(\cdot,\cdot)$ and $L^2$-norm
 $\|\cdot\|_{0}$, and $H^1_0(\Omega)$ and $X$ are equipped with the scalar product $(\nabla \mathbf{u},\nabla \mathbf{v})$ and equivalent norm
$\|\mathbf{u}\|_{H^1_0}=\|\nabla \mathbf{u}\|_{0}=|\mathbf{u}|_1.
$
Here $\|\mathbf{u}\|_i$ and $|\mathbf{u}|_i$ denote the usual norm and semi-norm of the Sobolev space $H^i(\Omega)^d$ for $i=0,1,2$. Define the closed subsets $V$ and $H$ of $X$ and $Y$, respectively, as  $
V=\{\mathbf{v}\in X; {\rm div} \;\mathbf{v}=0\}$ and $H=\{\mathbf{v}\in Y; \,{\rm div} \;\mathbf{v}=0,\, \mathbf{v}\cdot n|_{\partial\Omega}=0\}$  \cite{adams,te1}.
Let $P$ be the $L^2$-orthogonal projection of $Y$ onto $H$ and $A$ be the Stokes operator given by $A:=-P\Delta$, which satisfies \cite{adams,heywood1,larsson}
\begin{align}
\gamma_0\|\mathbf{v}\|_{0}^2\leq |\mathbf{v}|_{1}^2,~\mathbf{v}\in
X,~~\|\mathbf{v}\|_{2}^2\leq c_1\|A\mathbf{v}\|_{0}^2,~\gamma_0|\mathbf{v}|_{1}^2\leq \|A\mathbf{v}\|_{0}^2,~\mathbf{v}\in D(A),\label{eq2.4}
\end{align}
where $D(A):=H^2(\Omega)^2\cap
V$, $\gamma_0$ and $c_1$ are positive constants depending on $\Omega$. Furthermore, we could extend $A$ to its fractional powers $A^s$ for $s\geq 0$ \cite{Tri} and in particular, $A^{\frac{1}{2}}$ satisfies $\|A^{\frac{1}{2}}\mathbf{v}\|_0=|\mathbf{v}|_1$ for $\mathbf{v}\in V$.

With these definitions, the bilinear forms
$a(\cdot,\cdot)$ and $d(\cdot,\cdot)$ on $X\times X$ and $X\times
M$ are accordingly defined by $a(\mathbf{u},\mathbf{v})=(\nabla \mathbf{u},\nabla \mathbf{v}),~~d(\mathbf{v},q)=-(\mathbf{v},\nabla q)=(q,{\rm div} \;\mathbf{v}),~~ \mathbf{u},\mathbf{v}\in X, ~~q\in M$. In addition, trilinear forms on $X\times X\times X$ are defined by
$a_1(\mathbf{u},\mathbf{v},\mathbf{w})=<(\mathbf{u}\cdot\nabla)\mathbf{v},\mathbf{w}>_{X',X}$,
$b(\mathbf{u},\mathbf{v},\mathbf{w}):=a_1(\mathbf{u},\mathbf{v},\mathbf{w})+\frac{1}{2}((\mathrm{div}\mathbf{u})\mathbf{v},\mathbf{w})=\frac{1}{2} [a_1(\mathbf{u},\mathbf{v},\mathbf{w})-a_1(\mathbf{u},\mathbf{w},\mathbf{v})]$ for all $\mathbf{u},\mathbf{v},\mathbf{w}\in X$. Then $b(\mathbf{u},\mathbf{v},\mathbf{w})$ is continuous and satisfies \cite{heywood1,te1}
\begin{align}
&b(\mathbf{u},\mathbf{v},\mathbf{w})=-b(\mathbf{u},\mathbf{w},\mathbf{v}),\ \ \ \forall \ \mathbf{u},\ \mathbf{v},\ \mathbf{w}\in X,\label{eq2.01}\\
&|b(\mathbf{u},\mathbf{v},\mathbf{w})|\leq c_0\|\mathbf{u}\|_0^{\frac{1}{2}}|\mathbf{u}|_1^{\frac{1}{2}}|\mathbf{v}|_1^{\frac{1}{2}}\|A\mathbf{v}\|_0^{\frac{1}{2}}\|\mathbf{w}\|_0,\ \ \ \forall \ \mathbf{u}\in X,\ \mathbf{v}\in D(A),\ \mathbf{w}\in Y,\label{eq2.03}\\
&|b(\mathbf{v},\mathbf{u},\mathbf{w})|\leq c_0\|\mathbf{v}\|_0^{\frac{1}{2}}\|A\mathbf{v}\|_0^{\frac{1}{2}}|\mathbf{u}|_1\|\mathbf{w}\|_0,\ \ \ \forall \ \mathbf{u}\in X,\ \mathbf{v}\in D(A),\ \mathbf{w}\in Y,\label{eq2.04}
\end{align}
where $c_0$ is a positive constant depending on $\Omega$.

Let $
J(t;\mathbf{v},\mathbf{w}):=\big(
\rho \mathcal Q_{\beta,\delta}*\nabla \mathbf{v}(t),\nabla\mathbf{w}(t)\big)$ for $\mathbf{v}(\cdot,t),\mathbf{w}(\cdot,t)\in X
$
such that the variational formulation of (\ref{eq3.1}) could be formulated as: find $(\mathbf{u},p)\in X\times M$ such that
\begin{eqnarray}
&(\mathbf{u}_t,\mathbf{\phi})+\mu a(\mathbf{u},\mathbf{\phi})+b(\mathbf{u},\mathbf{u},\mathbf{\phi})-d(\mathbf{\phi},p)+d(\mathbf{u},q)+J(t;\mathbf{u},\mathbf{\phi})=(\mathbf{f},\mathbf{\phi}),\label{eq3.7f}
\end{eqnarray}
for all $(\mathbf{\phi},q)\in X\times M$. Following \cite{He15,heywood1}, the pressure in (\ref{eq3.7f}) could be eliminated by imposing both the solution $\mathbf{u}$ and the test function in the space $V$ and thus we tend to find $\mathbf{u}\in V$ such that
\begin{eqnarray}
(\mathbf{u}_t,\mathbf{\phi})+\mu a(\mathbf{u},\mathbf{\phi})+b(\mathbf{u},\mathbf{u},\mathbf{\phi})+J(t;\mathbf{u},\mathbf{\phi})=(\mathbf{f},\mathbf{\phi}),~~\forall \ \mathbf{\phi}\in V.\label{eq3.7of}
\end{eqnarray}
Conversely, (\ref{eq3.7of}) implies that there exists a function $p$ in (\ref{eq3.7f}) with $\mbox{div}\;\mathbf{u}=0$ if $\mathbf{u}$ is sufficiently regular, see e.g. \cite{He15,heywood1}.

\begin{lemma}\cite{zg2022,McLean1993} \label{L2.1}  For $0\leq \hat\beta<1$ and $\hat \delta\geq 0$, $\int_0^t(\mathcal Q_{\hat\beta,\hat\delta}*\phi(s))\phi(s)ds\geq 0$ for $\phi\in L^2(0,t)$ and
\begin{equation}\label{gambnd}
\int_0^\infty \mathcal Q_{1-z,\upsilon}(s)ds=\frac{\Gamma(z)}{\upsilon^z},~~\Gamma(z):=\int_0^\infty s^{z-1}e^{-s}ds,~~z,\upsilon>0.
\end{equation}
\end{lemma}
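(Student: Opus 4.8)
The statement has two parts: the nonnegativity of the convolution quadratic form, and the evaluation of the integral of the tempered power-law kernel. The plan is to handle them separately, as they rely on quite different ideas.

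For the second identity \eqref{gambnd}, I would simply substitute $u=\upsilon s$ in the integral $\int_0^\infty s^{z-1}e^{-\upsilon s}\,ds$, which is exactly $\int_0^\infty \mathcal Q_{1-z,\upsilon}(s)\,ds$ since $\mathcal Q_{1-z,\upsilon}(s)=s^{-(1-z)}e^{-\upsilon s}=s^{z-1}e^{-\upsilon s}$. The substitution gives $ds = du/\upsilon$ and $s^{z-1}=(u/\upsilon)^{z-1}$, so the integral becomes $\upsilon^{-z}\int_0^\infty u^{z-1}e^{-u}\,du = \Gamma(z)/\upsilon^z$. Convergence at $0$ requires $z>0$ (so that $1-z<1$, consistent with $0\le\hat\beta<1$) and convergence at $\infty$ is automatic from the exponential factor when $\upsilon>0$. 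This is routine.

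The first part — positivity of $\int_0^t(\mathcal Q_{\hat\beta,\hat\delta}*\phi)(s)\,\phi(s)\,ds\ge 0$ — is the substantive claim, and is the step I expect to be the main obstacle. The standard route is to show that the kernel $\mathcal Q_{\hat\beta,\hat\delta}$ is \emph{positive definite} in the sense of convolution quadratic forms, which by Bochner-type criteria is equivalent to the Fourier/Laplace transform of $\mathcal Q_{\hat\beta,\hat\delta}$ having nonnegative real part on the imaginary axis. Concretely, one writes $\widehat{\mathcal Q_{\hat\beta,\hat\delta}}(\omega)=\int_0^\infty t^{-\hat\beta}e^{-\hat\delta t}e^{-i\omega t}\,dt = \Gamma(1-\hat\beta)(\hat\delta+i\omega)^{-(1-\hat\beta)}$, using the identity just proved (analytically continued in the exponent), and checks that $\mathrm{Re}\,(\hat\delta+i\omega)^{-(1-\hat\beta)}\ge 0$: indeed $\hat\delta+i\omega$ lies in the right half-plane, so its argument lies in $(-\pi/2,\pi/2)$, and raising to the power $-(1-\hat\beta)$ with $1-\hat\beta\in(0,1]$ scales the argument by a factor in $[-1,0)$, keeping it in $(-\pi/2,\pi/2)$, whence the real part is positive. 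For $\hat\delta=0$ this reduces to the classical positivity of the Abel/fractional-integration kernel $t^{-\hat\beta}$. Since this is cited from \cite{zg2022,McLean1993}, the cleanest exposition is to invoke the positive-definiteness of the Riemann–Liouville kernel established in \cite{McLean1993} and observe that tempering by $e^{-\hat\delta t}$ preserves positive definiteness (the pointwise product of a positive-definite function with the positive-definite function $e^{-\hat\delta t}$, or equivalently a shift of the Laplace variable), together with a standard density argument to pass from smooth $\phi$ to $\phi\in L^2(0,t)$ after extending $\phi$ by zero outside $[0,t]$. The delicate point worth stating carefully is that the inequality holds for every finite $t$, which follows because extending $\phi$ by zero to $[0,\infty)$ does not change $\int_0^t(\mathcal Q_{\hat\beta,\hat\delta}*\phi)\phi$, so the half-line positive-definiteness applies directly.
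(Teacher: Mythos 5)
Your proposal is correct. The paper itself gives no proof of this lemma --- it is stated purely as a citation to \cite{zg2022,McLean1993} --- and your argument reconstructs exactly the standard reasoning behind those references: the substitution $u=\upsilon s$ for the Gamma identity, and the positive-type property of the tempered kernel via the nonnegativity of the real part of its Fourier--Laplace transform (equivalently, complete monotonicity of $t^{-\hat\beta}e^{-\hat\delta t}$), together with the observation that restriction to a finite interval $[0,t]$ is harmless. Nothing is missing.
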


\begin{assu}\label{L2.0} \cite{heywood1,kos,te1} There exists a unique
solution $(\mathbf{v},q)\in (X,M)$
 of the steady Stokes problem
\begin{align}
-\mu\Delta \mathbf{v}+\nabla q=\mathbf{g}, \quad \mathrm{div}\;\mathbf{v}=0\quad\mbox{ in } \Omega,~~~
\mathbf{v}|_{\partial\Omega}=0,\label{eqS2.0}
\end{align}
for prescribed $\mathbf{g}\in Y$ such that
$\|\mathbf{v}\|_{2}+\|q\|_{1}\leq c^\ast\|\mathbf{g}\|_{0}$,
where $c^\ast>0$ depends on $\Omega$.
\end{assu}

We finally introduce the Gr\"{o}nwall inequalities to support the analysis.

\begin{lemma} \cite{CanEwi}\label{L4.2G}  If $g,h,y,G$ are nonnegative locally integrable functions on the time interval $[0,\infty)$ such that $y(t)+G(t)\leq C+\int_{0}^th(s)ds+\int_{0}^tg(s)y(s)ds$  for all $t\geq 0$ and for some $C\geq 0$, then
$y(t)+G(t)\leq \big(C+\int_{0}^th(s)ds\big)\exp\big(\int_{0}^tg(s)ds\big)$ for $t\geq 0$.
\end{lemma}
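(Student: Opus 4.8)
The plan is to convert the integral inequality into a differential one that is handled by an explicit integrating factor. First I would introduce the auxiliary function
\[
Z(t):=C+\int_0^t h(s)\,ds+\int_0^t g(s)y(s)\,ds,
\]
which is well defined — the hypothesis already presupposes $gy\in L^1_{\mathrm{loc}}[0,\infty)$ so that the integral is finite — and locally absolutely continuous, with $Z(0)=C$ and $Z'(t)=h(t)+g(t)y(t)$ for a.e.\ $t\geq0$. Since $G\geq0$, the hypothesis yields $y(t)\leq y(t)+G(t)\leq Z(t)$ for every $t$, and combining this with $g\geq0$ gives
\[
Z'(t)=h(t)+g(t)y(t)\leq h(t)+g(t)Z(t)\quad\text{for a.e. }t\geq0.
\]

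Next I would remove the linear term via the integrating factor $E(t):=\exp\!\bigl(-\int_0^t g(s)\,ds\bigr)$, noting $0<E(t)\leq1$ because $g\geq0$, and set $W(t):=Z(t)E(t)$. Being a product of two locally absolutely continuous functions, $W$ is locally absolutely continuous, and since $E'=-gE$ a.e.\ one obtains
\[
W'(t)=\bigl(Z'(t)-g(t)Z(t)\bigr)E(t)\leq h(t)E(t)\leq h(t)\quad\text{for a.e. }t .
\]
Integrating over $[0,t]$ and using $W(0)=Z(0)=C$ gives $W(t)\leq C+\int_0^t h(s)\,ds$, i.e.
\[
Z(t)\leq\Bigl(C+\int_0^t h(s)\,ds\Bigr)\exp\!\Bigl(\int_0^t g(s)\,ds\Bigr),
\]
and the assertion follows from $y(t)+G(t)\leq Z(t)$.

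The only genuinely delicate point is the measure-theoretic bookkeeping: because $g,h,y,G$ are merely locally integrable rather than continuous, one must rely on the absolute continuity of $Z$ (and of the product $ZE$) to differentiate a.e.\ and to apply the fundamental theorem of calculus, and one should record at the outset that the term $\int_0^t g(s)y(s)\,ds$ in the hypothesis is finite, so that $Z$ is meaningful. Everything else is elementary. An alternative route that avoids differentiation entirely is Picard iteration: substituting the bound for $y(t)+G(t)$ back into $\int_0^t g(s)y(s)\,ds$ repeatedly produces the partial sums $\bigl(C+\int_0^t h\bigr)\sum_{k\ge0}\frac1{k!}\bigl(\int_0^t g\bigr)^k$ with a remainder that is readily controlled, which is essentially the classical argument and converges to the stated exponential bound.
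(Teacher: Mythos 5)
Your argument is correct: the integrating-factor computation with $Z(t)$ and $W=ZE$ is the standard proof of this Gr\"onwall inequality, and each step (the a.e.\ differentiation of the locally absolutely continuous functions, the use of $G\ge 0$ and $g\ge 0$ to get $Z'\le h+gZ$, and the final division by $E$) is valid; in fact you obtain the slightly sharper bound $C\exp(\int_0^t g)+\int_0^t h(s)\exp(\int_s^t g)\,ds$ before discarding $E(s)\le 1$. The paper gives no proof of this lemma — it is quoted directly from the cited reference — and your derivation coincides with the classical argument, so there is nothing further to compare.
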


\begin{lemma}\label{lemgenegron}
If $y$ and $h$ are nonnegative locally integrable functions on $[0,\infty)$ such that for any $0\leq \hat\beta<1$ and $\hat \delta> 0$,
\begin{equation}\label{gronnew1}
y(t)\leq h(t)+C\mathcal Q_{\hat\beta,\hat\delta}*y(t),\ \ t\geq 0,
\end{equation}
for some constant $C>0$, then for $t\geq 0$
\begin{equation}\label{gronnew2}
y(t)\leq h(t)+\sum_{n=1}^\infty \frac{(C\Gamma(1-\hat\beta))^n}{\Gamma(n(1-\hat\beta))}\mathcal Q_{1-n(1-\hat\beta),\hat\delta}*h(t)
\end{equation}
if the right-hand side terms exist. In particular, if $h(t)\leq C_0e^{-2\hat\alpha t}$ for some $0\leq \hat\alpha< \frac{\hat\delta}{2}$, then
\begin{align}\label{zmh}
y(t)&\leq C_0e^{-2\hat\alpha t}\Big(1+\frac{C\Gamma(1-\hat\beta)}{(\hat\delta-2\hat\alpha)^{1-\hat\beta}-C\Gamma(1-\hat\beta)}\Big)\text{ if }C<\frac{(\hat\delta-2\hat\alpha)^{1-\hat\beta}}{\Gamma(1-\hat\beta)}.
\end{align}

\end{lemma}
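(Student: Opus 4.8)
The plan is to treat \eqref{gronnew1} as a linear Volterra inequality and to solve it by Picard iteration, identifying the iterated convolution kernels in closed form. Substituting \eqref{gronnew1} into itself and iterating $n$ times yields, for every $t\ge 0$,
\[
y(t)\le h(t)+\sum_{k=1}^{n}C^k\big(\mathcal Q_{\hat\beta,\hat\delta}^{(*k)}*h\big)(t)+C^{n+1}\big(\mathcal Q_{\hat\beta,\hat\delta}^{(*n+1)}*y\big)(t),
\]
where $\mathcal Q_{\hat\beta,\hat\delta}^{(*k)}$ denotes the $k$-fold convolution of $\mathcal Q_{\hat\beta,\hat\delta}$ with itself. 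Since all the functions involved are nonnegative, associativity of convolution (Tonelli) makes the iteration legitimate, provided one checks that each term stays locally integrable; this holds because the exponent $k(1-\hat\beta)-1>-1$ for all $k\ge 1$.

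The first substantive step is to compute $\mathcal Q_{\hat\beta,\hat\delta}^{(*k)}$. Writing $\mathcal Q_{\hat\beta,\hat\delta}(t)=\Gamma(1-\hat\beta)\,g_{1-\hat\beta}(t)\,e^{-\hat\delta t}$ with $g_\sigma(t):=t^{\sigma-1}/\Gamma(\sigma)$, and using that the tempering factor factors out of convolutions, $\big(g_\sigma e^{-\hat\delta\cdot}\big)*\big(g_\tau e^{-\hat\delta\cdot}\big)=e^{-\hat\delta\cdot}\,(g_\sigma*g_\tau)$, together with the Euler beta identity $g_\sigma*g_\tau=g_{\sigma+\tau}$, an easy induction gives
\[
\mathcal Q_{\hat\beta,\hat\delta}^{(*k)}(t)=\big(\Gamma(1-\hat\beta)\big)^k g_{k(1-\hat\beta)}(t)\,e^{-\hat\delta t}=\frac{\big(\Gamma(1-\hat\beta)\big)^k}{\Gamma(k(1-\hat\beta))}\,\mathcal Q_{1-k(1-\hat\beta),\hat\delta}(t),
\]
which is precisely the coefficient appearing in \eqref{gronnew2}. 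It remains to show that the remainder $R_n(t):=C^{n+1}\big(\mathcal Q_{\hat\beta,\hat\delta}^{(*n+1)}*y\big)(t)$ tends to $0$ as $n\to\infty$, locally uniformly in $t$. Fix $T>0$; for $n$ large enough that $(n+1)(1-\hat\beta)\ge 1$ we have $\mathcal Q_{\hat\beta,\hat\delta}^{(*n+1)}(\tau)\le \big(\Gamma(1-\hat\beta)\big)^{n+1}T^{(n+1)(1-\hat\beta)-1}/\Gamma((n+1)(1-\hat\beta))$ on $[0,T]$, whence $R_n(t)\le \big(C\Gamma(1-\hat\beta)T^{1-\hat\beta}\big)^{n+1}T^{-1}\Gamma((n+1)(1-\hat\beta))^{-1}\int_0^T y(s)\,ds$ for $t\le T$. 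Since $\sum_{m\ge 1} a^m/\Gamma(m\gamma)<\infty$ for any $a>0$ and $\gamma\in(0,1]$ (a Mittag--Leffler-type series: $\Gamma$ grows super-exponentially), the general term tends to $0$, so $R_n\to0$ uniformly on $[0,T]$. Passing to the limit in the iterated inequality — the partial sums are nondecreasing in $n$, and their limit is finite by the hypothesis that the right-hand side of \eqref{gronnew2} exists — proves \eqref{gronnew2}.

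For the exponential-decay refinement, insert $h(s)\le C_0e^{-2\hat\alpha s}$ into each convolution term and substitute $r=t-s$:
\[
\big(\mathcal Q_{1-n(1-\hat\beta),\hat\delta}*h\big)(t)\le C_0\,e^{-2\hat\alpha t}\int_0^t r^{n(1-\hat\beta)-1}e^{-(\hat\delta-2\hat\alpha)r}\,dr\le C_0\,e^{-2\hat\alpha t}\,\frac{\Gamma(n(1-\hat\beta))}{(\hat\delta-2\hat\alpha)^{n(1-\hat\beta)}},
\]
where the last step uses $\hat\delta-2\hat\alpha>0$ and identity \eqref{gambnd} of Lemma \ref{L2.1} with $z=n(1-\hat\beta)$, $\upsilon=\hat\delta-2\hat\alpha$. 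Plugging this into \eqref{gronnew2}, the $\Gamma$-factors cancel and the sum becomes geometric with ratio $q:=C\Gamma(1-\hat\beta)(\hat\delta-2\hat\alpha)^{-(1-\hat\beta)}$, which converges to $q/(1-q)$ exactly when $q<1$, i.e.\ $C<(\hat\delta-2\hat\alpha)^{1-\hat\beta}/\Gamma(1-\hat\beta)$; together with $h(t)\le C_0e^{-2\hat\alpha t}$ this yields \eqref{zmh}. The main obstacle is the identification of the iterated kernel together with the control of the remainder term — in particular, verifying the summability of the $\Gamma$-weighted series so that the Picard iteration actually converges and the limiting inequality is meaningful; once that is in place, the remaining manipulations are routine computations with the beta and gamma integrals.
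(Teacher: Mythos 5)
Your proof is correct and follows essentially the same route as the paper: the paper delegates \eqref{gronnew2} to the iterated-kernel argument of \cite[Theorem 1]{Ye}, which is exactly your Picard iteration with the beta identity $g_\sigma*g_\tau=g_{\sigma+\tau}$ and the Mittag--Leffler-type control of the remainder, and then derives \eqref{zmh} by the same substitution $h(s)\le C_0e^{-2\hat\alpha s}$, the Gamma integral \eqref{gambnd} with $\upsilon=\hat\delta-2\hat\alpha$, and summation of the resulting geometric series. The only difference is that you write out in full the part the paper omits by citation.
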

\begin{proof}
The equation (\ref{gronnew2}) could be proved following almost the same procedure as \cite[Theorem 1]{Ye}, and is thus omitted.
If $h(t)\leq C_0e^{-2\hat\alpha t}$, we invoke this in (\ref{gronnew2}) and apply (\ref{gambnd}) to get
\begin{align*}
y(t)&\leq C_0e^{-2\hat\alpha t}+C_0\sum_{n=1}^\infty \frac{(C\Gamma(1-\hat\beta))^n}{\Gamma(n(1-\hat\beta))}\mathcal Q_{1-n(1-\hat\beta),\hat\delta}*e^{-2\hat\alpha t}\\
&=C_0e^{-2\hat\alpha t}\Big[1+\sum_{n=1}^\infty \frac{(C\Gamma(1-\hat\beta))^n}{\Gamma(n(1-\hat\beta))}\mathcal Q_{1-n(1-\hat\beta),\hat\delta-2\hat\alpha}*1\Big]\\
&\leq \frac{C_0}{e^{2\hat\alpha t}} \Big[1+\sum_{n=1}^\infty\bigg(\frac{C\Gamma(1-\hat\beta)}{(\hat\delta-2\hat\alpha)^{1-\hat\beta}}\bigg)^n \Big]= C_0e^{-2\hat\alpha t} \big[1+\frac{C\Gamma(1-\hat\beta)}{(\hat\delta-2\hat\alpha)^{1-\hat\beta}-C\Gamma(1-\hat\beta)}\big],
\end{align*}
which completes the proof.
\end{proof}
\begin{remark}
It is worth mentioning that the constraint (\ref{zmh}) in Lemma \ref{lemgenegron} seems critical for this Gr\"{o}nwall inequality and is difficult to be eliminated. It was shown in \cite[Lemma 1.1]{All} that the inequality (\ref{gronnew1}) leads to the  estimate $y(t)\leq h(t)+\mathcal R(t)*h(t) $ where $\mathcal R(t)$ refers to the resolvent kernel of $C\mathcal Q_{\hat\beta,\hat\delta}(t)$. To ensure $\mathcal R\in L^1(0,\infty)$, we may need the condition $\int_0^\infty C\mathcal Q_{\hat\beta,\hat\delta}(t)dt<1$, cf. \cite[Proposition 2.11]{CanSfo}, which leads to the same constraint as that in (\ref{zmh}) with $\hat\alpha=0$.
\end{remark}

In the rest of the work, $\kappa$ refers to a positive constant that does not depend on discretization parameters and time $t$ but may assume different values in different places.
\section{High-order solution regularity}\label{sec3}
In the rest of the work, we assume $0\leq\alpha< \frac{1}{2}\min\{\delta,\frac{\mu\gamma_0}{2}\}$ where $\gamma_0$ is introduced in (\ref{eq2.4}).

\subsection{Basic  results}
In \cite{zg2022} the well-posedness and some basic regularity results of model (\ref{eq3.1}) have been analyzed. Based on the Gr\"{o}nwall inequality proved in Lemma \ref{lemgenegron}, we present these regularity results in a more precise manner.
\begin{lemma}\cite{zg2022} \label{TRR1} Suppose
$
 \frac{4\rho^2}{\mu^2}\frac{\Gamma(1-\beta)}{\delta^{1-\beta}}<\frac{(\delta-2\alpha)^{1-\beta}}{\Gamma(1-\beta)}
$, $\mathbf{f}\in L^\infty(0,\infty;\mathbf{H})\cap L^2(0,\infty;Y)$, $\mathbf{f}_t\in L^2(0,\infty;H^{-1}(\Omega)^2)$ and $\int_{0}^\infty e^{2\alpha s}\|\mathbf{f}(s)\|_0^2ds+\int_{0}^\infty e^{2\alpha s}\|\mathbf{f}_s(s)\|_{-1}^2ds<\infty$, $\|\mathbf{f}(t)\|_0 \leq C^* e^{-\alpha t}$ for $t\geq 0$ where $C^*\geq 0$ is a constant and $\|\mathbf{f}_t(t)\|_{-1}:=\sup\limits_{0\neq \mathbf{v}\in X}\frac{(\mathbf{f}_t(t),\mathbf{v})}{|\mathbf{v}|_1}$.
 If $(\mathbf{u},p)$ satisfies (\ref{eq3.1}), then the following exponential decay holds
\begin{align*}
\|A\mathbf{u}\|_0^2+\|\mathbf{u}_t\|_0^2+\mu e^{-2\alpha t}\int_{0}^te^{2\alpha s}\|A\mathbf{u}\|_0^2ds+\mu e^{-2\alpha t}\int_{0}^te^{2\alpha s}|\mathbf{u}_s|_1^2ds\leq \kappa e^{-2\alpha t},\ t\geq0.
\end{align*}
\end{lemma}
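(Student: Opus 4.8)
The plan is to bootstrap, one norm at a time, from the low-order weighted energy estimates (essentially those of \cite{zg2022}) up to the claimed bounds on $\|A\mathbf{u}\|_0$ and $\|\mathbf{u}_t\|_0$, using the new convolution-type Gr\"onwall inequality of Lemma \ref{lemgenegron} to handle the top-order term, for which the classical exponential-kernel techniques are unavailable. The recurring device is the \emph{weight-transfer identity} $e^{2\alpha t}(\mathcal Q_{\beta,\delta}*\mathbf{v})(t)=(\mathcal Q_{\beta,\delta-2\alpha}*(e^{2\alpha\cdot}\mathbf{v}))(t)$, which moves the exponential weight into the kernel at the sole cost of shifting the tempering parameter $\delta\mapsto\delta-2\alpha$ (still positive, since $\alpha<\delta/2$); combined with Lemma \ref{L2.1} it lets the memory term be discarded from the weighted energy identities, while the Young convolution inequality $(\mathcal Q_{\beta,\delta}*g)^2\le\|\mathcal Q_{\beta,\delta}\|_{L^1(0,\infty)}(\mathcal Q_{\beta,\delta}*g^2)$ with $\|\mathcal Q_{\beta,\delta}\|_{L^1(0,\infty)}=\Gamma(1-\beta)/\delta^{1-\beta}$ (cf. (\ref{gambnd})) recasts it in the form treated by Lemma \ref{lemgenegron}. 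The smallness hypothesis is precisely what renders the resulting constant admissible in (\ref{zmh}).

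First I would record the low-order estimates. Testing (\ref{eq3.7of}) with $\mathbf{\phi}=\mathbf{u}$ (so $b(\mathbf{u},\mathbf{u},\mathbf{u})=0$), multiplying by $e^{2\alpha t}$, integrating in time, discarding $\int_0^t e^{2\alpha s}J(s;\mathbf{u},\mathbf{u})\,ds\ge0$ by the weight transfer and Lemma \ref{L2.1}, absorbing the residual $\alpha$-term via $\gamma_0\|\mathbf{u}\|_0^2\le|\mathbf{u}|_1^2$ and $\alpha<\mu\gamma_0/4$, and using $\int_0^\infty e^{2\alpha s}\|\mathbf{f}\|_0^2\,ds<\infty$, gives $\|\mathbf{u}(t)\|_0^2\le\kappa e^{-2\alpha t}$ and $\int_0^\infty e^{2\alpha s}|\mathbf{u}(s)|_1^2\,ds\le\kappa$. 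Testing next with $\mathbf{\phi}=A\mathbf{u}$ (so $\mu a(\mathbf{u},A\mathbf{u})=\mu\|A\mathbf{u}\|_0^2$, $J(t;\mathbf{u},A\mathbf{u})=\rho(\mathcal Q_{\beta,\delta}*A\mathbf{u},A\mathbf{u})$), bounding the trilinear term by (\ref{eq2.03}) as $|b(\mathbf{u},\mathbf{u},A\mathbf{u})|\le c_0\|\mathbf{u}\|_0^{1/2}|\mathbf{u}|_1\|A\mathbf{u}\|_0^{3/2}\le\tfrac{\mu}{4}\|A\mathbf{u}\|_0^2+C(\|\mathbf{u}\|_0^2|\mathbf{u}|_1^2)|\mathbf{u}|_1^2$, with $C\|\mathbf{u}\|_0^2|\mathbf{u}|_1^2\in L^1(0,\infty)$ by the first estimate, then weighting, dropping the memory term, and invoking Lemma \ref{L4.2G}, yields $|\mathbf{u}(t)|_1^2\le\kappa e^{-2\alpha t}$ and $\int_0^\infty e^{2\alpha s}\|A\mathbf{u}(s)\|_0^2\,ds\le\kappa$.

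Next I would differentiate the momentum equation in time. Since $\mathbf{u}(\cdot,0)=\mathbf{0}$ gives $\nabla\mathbf{u}(\cdot,0)=\mathbf{0}$, hence $\partial_t(\mathcal Q_{\beta,\delta}*\nabla\mathbf{u})=\mathcal Q_{\beta,\delta}*\nabla\mathbf{u}_t$ with no boundary term, $\mathbf{w}:=\mathbf{u}_t$ satisfies, for time-independent $\mathbf{\phi}\in V$, $(\mathbf{w}_t,\mathbf{\phi})+\mu a(\mathbf{w},\mathbf{\phi})+b(\mathbf{w},\mathbf{u},\mathbf{\phi})+b(\mathbf{u},\mathbf{w},\mathbf{\phi})+J(t;\mathbf{w},\mathbf{\phi})=(\mathbf{f}_t,\mathbf{\phi})$, and evaluating and projecting the equation at $t=0$ gives $\|\mathbf{w}(0)\|_0=\|P\mathbf{f}(0)\|_0\le\|\mathbf{f}(0)\|_0\le C^*$. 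Testing with $\mathbf{w}$ annihilates $b(\mathbf{u},\mathbf{w},\mathbf{w})$; since $\mathbf{w}$ is divergence-free, the new term obeys $|b(\mathbf{w},\mathbf{u},\mathbf{w})|\le c\|\mathbf{w}\|_0|\mathbf{w}|_1|\mathbf{u}|_1\le\tfrac{\mu}{4}|\mathbf{w}|_1^2+C|\mathbf{u}|_1^2\|\mathbf{w}\|_0^2$ by Ladyzhenskaya's inequality in two dimensions (using only $\mathbf{u}\in V$), with $C|\mathbf{u}|_1^2\in L^1(0,\infty)$; the memory term is dropped after the weight transfer, and $(\mathbf{f}_t,\mathbf{w})\le\tfrac{\mu}{4}|\mathbf{w}|_1^2+C\|\mathbf{f}_t\|_{-1}^2$ with $\int_0^\infty e^{2\alpha s}\|\mathbf{f}_s\|_{-1}^2\,ds<\infty$, so Lemma \ref{L4.2G} yields $\|\mathbf{u}_t(t)\|_0^2\le\kappa e^{-2\alpha t}$ and $\int_0^\infty e^{2\alpha s}|\mathbf{u}_s(s)|_1^2\,ds\le\kappa$. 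Finally, applying $P$ to the momentum equation gives $\mu A\mathbf{u}(t)=P\mathbf{f}(t)-\mathbf{u}_t(t)-P[(\mathbf{u}\cdot\nabla)\mathbf{u}](t)-\rho(\mathcal Q_{\beta,\delta}*A\mathbf{u})(t)$; using $\|P[(\mathbf{u}\cdot\nabla)\mathbf{u}]\|_0\le c\|\mathbf{u}\|_0^{1/2}|\mathbf{u}|_1\|A\mathbf{u}\|_0^{1/2}\le\varepsilon\|A\mathbf{u}\|_0+C_\varepsilon\|\mathbf{u}\|_0|\mathbf{u}|_1^2$ and $\|\mathcal Q_{\beta,\delta}*A\mathbf{u}\|_0\le\mathcal Q_{\beta,\delta}*\|A\mathbf{u}\|_0$ gives $\|A\mathbf{u}(t)\|_0\le\tilde h(t)+\tfrac{\rho}{\mu-\varepsilon}(\mathcal Q_{\beta,\delta}*\|A\mathbf{u}\|_0)(t)$ with $\tilde h(t)\le\kappa e^{-\alpha t}$ by the previous steps; squaring and applying the Young convolution inequality above turns this into $\|A\mathbf{u}(t)\|_0^2\le h(t)+C_*(\mathcal Q_{\beta,\delta}*\|A\mathbf{u}\|_0^2)(t)$ with $h(t)\le\kappa e^{-2\alpha t}$ and $C_*$ controlled by $\tfrac{4\rho^2}{\mu^2}\tfrac{\Gamma(1-\beta)}{\delta^{1-\beta}}$, which the hypothesis keeps below $\tfrac{(\delta-2\alpha)^{1-\beta}}{\Gamma(1-\beta)}$; hence (\ref{zmh}) of Lemma \ref{lemgenegron} (with $\hat\beta=\beta$, $\hat\delta=\delta$, $\hat\alpha=\alpha$) gives $\|A\mathbf{u}(t)\|_0^2\le\kappa e^{-2\alpha t}$, and adding the four bounds completes the proof.

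The main obstacle is structural rather than computational: because the tempered power-law kernel lacks both the variable-splitting property and the smoothness of the pure exponential, the memory term can be neither split nor differentiated through, so the classical Oldroyd-type long-time decay arguments do not carry over; the remedy is the weight-transfer identity together with Lemma \ref{lemgenegron}, and the order of the bootstrap is essential --- the trilinear term generated by differentiating in time, $b(\mathbf{u}_t,\mathbf{u},\mathbf{u}_t)$, must be estimated using only the already-established $H^1$ decay of $\mathbf{u}$ (the pointwise $D(A)$ bound not yet being available), and the estimate for $\|A\mathbf{u}\|_0$, the single step requiring the smallness hypothesis, must come last, fed by the decay of $\mathbf{u}_t$. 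A secondary technical point is the rigorous justification of the time-differentiation and of $\mathbf{u}_t(0)=P\mathbf{f}(0)$, resting on the compatibility $\mathbf{u}(\cdot,0)=\mathbf{0}$ and a standard difference-quotient argument given the regularity already furnished by \cite{zg2022}.
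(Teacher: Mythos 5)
The paper does not actually prove Lemma \ref{TRR1}: it is imported verbatim from \cite{zg2022}, with the text only remarking that the decay rates are ``presented in a more precise manner'' via the new Gr\"onwall inequality of Lemma \ref{lemgenegron}. So there is no in-paper proof to compare against line by line; the closest internal analogue is the proof of Theorem \ref{TRR}, which extends these bounds to $\|p\|_1$, $|\mathbf{u}_t|_1$, $\|\mathbf{u}_{tt}\|_0$ by exactly the devices you use --- time-differentiation of the equation with the boundary term $\rho\mathcal Q_{\beta,\delta-2\alpha}(t)a(\mathbf{u}(0),\cdot)$ killed by $\mathbf{u}(0)=\mathbf{0}$, discarding the weighted memory term by Lemma \ref{L2.1}, the Young-type bound $(\mathcal Q_{\beta,\delta}*g)^2\le\frac{\Gamma(1-\beta)}{\delta^{1-\beta}}\mathcal Q_{\beta,\delta}*g^2$ from Lemma \ref{l3.1}, and the classical Gr\"onwall Lemma \ref{L4.2G}. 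Your bootstrap ($L^2$ and $H^1$ decay of $\mathbf{u}$, then $L^2$ decay of $\mathbf{u}_t$, then $\|A\mathbf{u}\|_0$ recovered from the equation and closed by Lemma \ref{lemgenegron} under the smallness hypothesis, whose factor $\frac{4\rho^2}{\mu^2}$ your constant indeed respects) is coherent and is consistent with how the paper's own Lemma \ref{l5.4} closes its top-order estimate via (\ref{zmh}). Two small technical remarks: to invoke the positivity of Lemma \ref{L2.1} after weighting you need the \emph{symmetric} split $e^{2\alpha s}=e^{\alpha s}\cdot e^{\alpha s}$, which places $\mathcal Q_{\beta,\delta-\alpha}$ against the single function $e^{\alpha\cdot}\nabla\mathbf{u}$ on both slots --- the one-sided transfer $\delta\mapsto\delta-2\alpha$ you state does not directly produce the form $\int_0^t(\mathcal Q*\phi)\phi$; and the identification $\mathbf{u}_t(0)=P\mathbf{f}(0)$ should be read as a statement about the trace $\lim_{t\to0^+}\mathbf{u}_t(t)$, justified by the compatibility $\mathbf{u}(\cdot,0)=\mathbf{0}$ as the paper itself does at the start of the proof of Theorem \ref{TRR}. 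Neither affects the substance.
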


\begin{lemma} \label{l3.1} For $r=0,1$, the following estimates
\begin{align*}
&\big|J(t;\mathbf{\phi},A^r\mathbf{\psi})\big|\leq\frac{\mu}{\varepsilon}\|A^{\frac{r+p}{2}}\mathbf{\psi}\|_0^2+\frac{\varepsilon\rho^2}{4\mu}(\mathcal Q_{\beta,\delta}*\|A^{\frac{r+q}{2}}\mathbf{\phi}\|_0)^2,\\
&\bigg|\int_0^{t}e^{2\alpha s}J(s;\mathbf{\phi},A^r\mathbf{\psi})ds\bigg|^2\leq\frac{\rho^2\Gamma^2(1-\beta)}{[\delta(\delta-2\alpha)]^{1-\beta}}\|e^{\alpha s}A^{\frac{r+q}{2}} \mathbf{\phi}\|^2_{L^2(0,t;L^2)}\|e^{\alpha s}A^{\frac{r+p}{2}}\mathbf{\psi}\|^2_{L^2(0,t;L^2)},
\end{align*}
and $(\mathcal Q_{\beta,\delta}*\|A^{\frac{r+q}{2}}\mathbf{\phi}\|_0)^2\leq\frac{\Gamma(1-\beta)}{\delta^{1-\beta}}\mathcal Q_{\beta,\delta}*\|A^{\frac{r+q}{2}}\mathbf{\phi}\|_0^2$ hold for integers $p$ and $q$ satisfying $p+q=2$ and $1\leq q\leq 2- r$, $\varepsilon>0$, $0\leq\alpha<\delta/2$ and for $\mathbf{\phi}$, $ \mathbf{\psi}$ such that the right-hand side terms exist.
\end{lemma}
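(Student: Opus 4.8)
The plan is to first rewrite $J(t;\mathbf{\phi},A^r\mathbf{\psi})$ as a symmetric pairing of fractional powers of the Stokes operator, after which all three estimates reduce to elementary convolution inequalities. Since time convolution commutes with the spatial operators, $\mathcal Q_{\beta,\delta}*\nabla\mathbf{\phi}=\nabla(\mathcal Q_{\beta,\delta}*\mathbf{\phi})$ and $A^s(\mathcal Q_{\beta,\delta}*\mathbf{\phi})=\mathcal Q_{\beta,\delta}*(A^s\mathbf{\phi})$; moreover $(\mathcal Q_{\beta,\delta}*\mathbf{\phi})(t)\in V$ whenever $\mathbf{\phi}(\cdot,s)\in V$ a.e., being a weighted average of elements of the closed, divergence-free subspace $V$. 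Using the identity $(\nabla\mathbf{a},\nabla\mathbf{b})=(A^{1/2}\mathbf{a},A^{1/2}\mathbf{b})$ for $\mathbf{a},\mathbf{b}\in V$ (which follows from $\|A^{1/2}\mathbf v\|_0=|\mathbf v|_1$ by polarization) together with the self-adjointness of $A^s$, one obtains, for $\mathbf{\phi},\mathbf{\psi}$ regular enough,
\[
J(t;\mathbf{\phi},A^r\mathbf{\psi})=\rho\big(A^{\frac{r+q}{2}}(\mathcal Q_{\beta,\delta}*\mathbf{\phi})(t),A^{\frac{r+p}{2}}\mathbf{\psi}(t)\big)=\rho\big((\mathcal Q_{\beta,\delta}*A^{\frac{r+q}{2}}\mathbf{\phi})(t),A^{\frac{r+p}{2}}\mathbf{\psi}(t)\big),
\]
where the redistribution of powers is legitimate because the total power $\tfrac12+\tfrac12+r=r+1$ splits as $\tfrac{r+q}{2}+\tfrac{r+p}{2}$ thanks to $p+q=2$, and $1\le q\le 2-r$ keeps both exponents non-negative for $r=0,1$.

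With this in hand, the third estimate is just the Cauchy--Schwarz inequality applied to the convolution: writing $\mathcal Q_{\beta,\delta}=\mathcal Q_{\beta,\delta}^{1/2}\cdot\mathcal Q_{\beta,\delta}^{1/2}$,
\[
\big(\mathcal Q_{\beta,\delta}*\|A^{\frac{r+q}{2}}\mathbf{\phi}\|_0\big)(t)^2\le\Big(\int_0^t\mathcal Q_{\beta,\delta}(\sigma)\,d\sigma\Big)\big(\mathcal Q_{\beta,\delta}*\|A^{\frac{r+q}{2}}\mathbf{\phi}\|_0^2\big)(t),
\]
and $\int_0^t\mathcal Q_{\beta,\delta}(\sigma)d\sigma\le\int_0^\infty\mathcal Q_{\beta,\delta}(\sigma)d\sigma=\Gamma(1-\beta)/\delta^{1-\beta}$ by (\ref{gambnd}). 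For the first estimate I would apply the Cauchy--Schwarz inequality in space to the reduced form of $J$, then Minkowski's integral inequality $\|(\mathcal Q_{\beta,\delta}*A^{\frac{r+q}{2}}\mathbf{\phi})(t)\|_0\le(\mathcal Q_{\beta,\delta}*\|A^{\frac{r+q}{2}}\mathbf{\phi}\|_0)(t)$, and close with Young's inequality $xy\le\frac{\mu}{\varepsilon}x^2+\frac{\varepsilon}{4\mu}y^2$, taking $x=\|A^{\frac{r+p}{2}}\mathbf{\psi}\|_0$ and $y=\rho(\mathcal Q_{\beta,\delta}*\|A^{\frac{r+q}{2}}\mathbf{\phi}\|_0)$.

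For the second estimate, set $g=\|A^{\frac{r+q}{2}}\mathbf{\phi}\|_0$ and $h=\|A^{\frac{r+p}{2}}\mathbf{\psi}\|_0$. Combining the bound $|J(s;\mathbf{\phi},A^r\mathbf{\psi})|\le\rho(\mathcal Q_{\beta,\delta}*g)(s)\,h(s)$ from above with the pointwise inequality of the third estimate gives
\[
e^{2\alpha s}|J(s;\mathbf{\phi},A^r\mathbf{\psi})|\le\rho\Big(\tfrac{\Gamma(1-\beta)}{\delta^{1-\beta}}\Big)^{1/2}\big(e^{2\alpha s}(\mathcal Q_{\beta,\delta}*g^2)(s)\big)^{1/2}\big(e^{2\alpha s}h(s)^2\big)^{1/2}.
\]
Integrating over $(0,t)$ and applying the Cauchy--Schwarz inequality in $s$, the factor carrying $\mathbf{\psi}$ is exactly $\|e^{\alpha s}A^{\frac{r+p}{2}}\mathbf{\psi}\|_{L^2(0,t;L^2)}$, while for the other factor one interchanges the order of integration and uses
\[
\int_\tau^t e^{2\alpha s}\mathcal Q_{\beta,\delta}(s-\tau)\,ds=e^{2\alpha\tau}\int_0^{t-\tau}\mathcal Q_{\beta,\delta-2\alpha}(\sigma)\,d\sigma\le e^{2\alpha\tau}\,\frac{\Gamma(1-\beta)}{(\delta-2\alpha)^{1-\beta}},
\]
which is finite since $0\le\alpha<\delta/2$, to get $\int_0^t e^{2\alpha s}(\mathcal Q_{\beta,\delta}*g^2)(s)\,ds\le\frac{\Gamma(1-\beta)}{(\delta-2\alpha)^{1-\beta}}\|e^{\alpha s}A^{\frac{r+q}{2}}\mathbf{\phi}\|_{L^2(0,t;L^2)}^2$. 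Collecting constants and squaring produces exactly the factor $\rho^2\Gamma^2(1-\beta)/[\delta(\delta-2\alpha)]^{1-\beta}$.

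The computations are routine; the delicate points are the bookkeeping of the fractional-power exponents in the reduction of $J$ --- ensuring they sum to $r+1$, split as $\tfrac{r+q}{2}+\tfrac{r+p}{2}$, and remain non-negative for the permitted values of $q$, which is exactly what $1\le q\le2-r$ guarantees --- and, in the second estimate, carrying out the two Cauchy--Schwarz steps in the right order: the convolution one first (to extract $\delta^{1-\beta}$) and then the one in time with the weight split as $e^{2\alpha s}=e^{2\alpha(s-\tau)}e^{2\alpha\tau}$ (to extract $(\delta-2\alpha)^{1-\beta}$). Doing them in the opposite order, or bounding $\|\mathcal Q_{\beta,\delta}*g\|_{L^2}$ crudely via Young's convolution inequality, only yields the weaker constant $(\delta-\alpha)^{2(1-\beta)}$ in place of $[\delta(\delta-2\alpha)]^{1-\beta}$, so this ordering is what makes the stated sharp form come out.
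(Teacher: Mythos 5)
Your proposal is correct and follows essentially the same route as the paper: the paper reduces $J$ via $(A\mathbf{\phi},\mathbf{\psi})=(\nabla\mathbf{\phi},\nabla\mathbf{\psi})$ and Minkowski's integral inequality, cites \cite[Lemma 4]{zg2022} for the first and third bounds, and proves the second by exactly your two-step Cauchy--Schwarz computation — first extracting $\Gamma(1-\beta)/\delta^{1-\beta}$ from the convolution, then interchanging the order of integration with the weight $e^{2\alpha s}=e^{2\alpha\tau}e^{2\alpha(s-\tau)}$ to produce $(\delta-2\alpha)^{-(1-\beta)}$ (cf. the quantity $I(A^{\frac{r+q}{2}}\mathbf{\phi})$ in (\ref{eqJ.2})). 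Your bookkeeping of the fractional exponents and the final constant $\rho^2\Gamma^2(1-\beta)/[\delta(\delta-2\alpha)]^{1-\beta}$ both check out.
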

\begin{proof} The first and third inequalities could be found in \cite[Lemma 4]{zg2022}, and we remain to prove the second one. We use H\"{o}lder inequality, Minkowski integral inequality and $(A\mathbf{\phi},\mathbf{\psi})=(\nabla\mathbf{\phi},\nabla\mathbf{\psi})$ to get
\begin{equation}
\bigg|\int_0^{t^\ast}e^{2\alpha t}J(t;\mathbf{\phi},A^r\mathbf{\psi})dt\bigg|^2\leq\rho^2I(A^{\frac{r+q}{2}} \mathbf{\phi})\|e^{\alpha t}A^{\frac{r+p}{2}}\mathbf{\psi}\|_{L^2(0,t^*;L^2)}^2,\label{eqJ.0}
\end{equation}
where $I(A^{\frac{r+q}{2}} \mathbf{\phi}):=\int_0^{t^\ast}e^{2\alpha t}\big(\mathcal Q_{\beta,\delta}*\|A^{\frac{r+q}{2}} \mathbf{\phi}\|_0\big)^2dt$. Then use (\ref{gambnd}), we get
\begin{align}
&I(A^{\frac{r+q}{2}} \mathbf{\phi})\leq\int_0^{t^\ast}e^{2\alpha t}\int_0^t\mathcal Q_{\beta,\delta}(t-s)ds\big(\mathcal Q_{\beta,\delta}*\|A^{\frac{r+q}{2}} \mathbf{\phi}\|_0^2\big)dt\nonumber\\
&\quad\leq\frac{\Gamma(1-\beta)}{\delta^{1-\beta}}\int_0^{t^\ast}\|A^{\frac{r+q}{2}} \mathbf{\phi}(s)\|_0^2\int_s^{t^\ast}e^{2\alpha t}\mathcal Q_{\beta,\delta}(t-s)dtds\nonumber\\
&\quad=\frac{\Gamma(1-\beta)}{\delta^{1-\beta}}\int_0^{t^\ast}e^{2\alpha s}\|A^{\frac{r+q}{2}} \mathbf{\phi}(s)\|_0^2\int_s^{t^\ast}\mathcal Q_{\beta,\delta-2\alpha}(t-s)dtds\nonumber\\
&\quad\leq\frac{\Gamma^2(1-\beta)}{[\delta(\delta-2\alpha)]^{1-\beta}}\|e^{\alpha t}A^{\frac{r+q}{2}} \mathbf{\phi}\|^2_{L^2(0,t^\ast;L^2)}.\label{eqJ.2}
\end{align}
We invoke this estimate in (\ref{eqJ.0}) to complete the proof.
\end{proof}
\subsection{Improved regularities}
We prove improved regularities of the solutions.
\begin{theorem} \label{TRR} Under conditions of Lemma \ref{TRR1}, $|-\nabla \Delta^{-1}(\nabla\cdot \mathbf{f}(0))+\mathbf{f}(0)|_1^2< \infty$,
$\int_{0}^\infty e^{2\alpha s}\|\mathbf{f}_s(s)\|_{0}^2ds< \infty,$ the following estimates hold for any $t\geq 0$,
\begin{align}
&\|p\|_1^2+e^{-2\alpha t}\int_{0}^te^{2\alpha s}\|p\|_1^2ds+|\mathbf{u}_t|_1^2+\mu e^{-2\alpha t}\int_{0}^te^{2\alpha s}\|A\mathbf{u}_s\|_0^2ds\leq\kappa e^{-2\alpha t},\label{thme1}\\
&e^{-2\alpha t}\int_0^t e^{2\alpha s}\|\mathbf{u}_{ss}\|_0^2 ds+e^{-2\alpha t}\int_0^te^{2\alpha s}\|p_s\|_1^2ds\leq\kappa e^{-2\alpha t}. \label{eq3.10}
\end{align}
\end{theorem}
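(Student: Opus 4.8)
The plan is to differentiate the velocity equation (\ref{eq3.7of}) in time and exploit the regularity already provided by Lemma \ref{TRR1} to bootstrap to the stronger norms in (\ref{thme1}) and (\ref{eq3.10}), with the pressure estimates recovered \emph{a posteriori} from the stationary Stokes problem via Assumption \ref{L2.0}. For the pressure bound $\|p\|_1$ in (\ref{thme1}), I would freeze time and regard (\ref{eq3.1}) as a steady Stokes system with right-hand side $\mathbf g = \mathbf f - \mathbf u_t - (\mathbf u\cdot\nabla)\mathbf u + \rho\mathcal Q_{\beta,\delta}*\Delta\mathbf u$, so that $\|p\|_1 \le c^\ast\|\mathbf g\|_0$; the terms $\|\mathbf f\|_0$ and $\|\mathbf u_t\|_0$ decay like $e^{-\alpha t}$ by hypothesis and Lemma \ref{TRR1}, the nonlinear term is controlled by (\ref{eq2.03})--(\ref{eq2.04}) together with $\|A\mathbf u\|_0\le\kappa e^{-\alpha t}$, and the memory term is bounded using the third inequality of Lemma \ref{l3.1} (with $r=1$) or directly by $\mathcal Q_{\beta,\delta}*\|A\mathbf u\|_0 \le \kappa\, \mathcal Q_{\beta,\delta}*e^{-\alpha s} \le \kappa e^{-\alpha t}$ since $\alpha<\delta/2$. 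The weighted time-integral of $\|p\|_1^2$ follows by the same pointwise bounds after multiplying by $e^{2\alpha s}$ and integrating, using the already-established $\int_0^t e^{2\alpha s}\|A\mathbf u\|_0^2\,ds\le\kappa e^{2\alpha t}$ part of Lemma \ref{TRR1} and the convolution Young-type estimate in Lemma \ref{l3.1}.

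The core of the proof is the bound on $|\mathbf u_t|_1$ and $\int_0^t e^{2\alpha s}\|A\mathbf u_s\|_0^2\,ds$. I would differentiate (\ref{eq3.7of}) in $t$, obtaining
$(\mathbf u_{tt},\phi)+\mu a(\mathbf u_t,\phi)+b(\mathbf u_t,\mathbf u,\phi)+b(\mathbf u,\mathbf u_t,\phi)+ \frac{d}{dt}J(t;\mathbf u,\phi)=(\mathbf f_t,\phi)$,
where the derivative of the memory term produces a singular boundary contribution $\rho\,t^{-\beta}e^{-\delta t}(\nabla\mathbf u(0),\nabla\phi)=0$ since $\mathbf u(0)=\mathbf 0$, leaving $\frac{d}{dt}(\mathcal Q_{\beta,\delta}*\nabla\mathbf u) = \mathcal Q_{\beta,\delta}*\nabla\mathbf u_t$ — this is exactly why the zero initial condition is used. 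I would then test with $\phi = A\mathbf u_t$ (admissible after the usual Galerkin/regularity justification), multiply by $e^{2\alpha s}$, and integrate in time. The diffusion term gives $\mu e^{2\alpha s}\|A\mathbf u_s\|_0^2$ (using $a(\mathbf v, A\mathbf v)=\|A\mathbf v\|_0^2$ for $\mathbf v\in D(A)$), the time-derivative term produces $\frac{1}{2}\frac{d}{dt}(e^{2\alpha s}|\mathbf u_s|_1^2) - \alpha e^{2\alpha s}|\mathbf u_s|_1^2$, the trilinear terms are absorbed via (\ref{eq2.03})--(\ref{eq2.04}) and Young's inequality using $\|A\mathbf u\|_0,\,|\mathbf u_t|_1\le\kappa e^{-\alpha s}$ from Lemma \ref{TRR1}, the memory term is handled by the first inequality of Lemma \ref{l3.1} (with $r=1$, $\varepsilon$ small) to extract an absorbable $\frac{\mu}{\varepsilon}\|A\mathbf u_s\|_0^2$ plus a convolution-square term, and the force term $(\mathbf f_s, A\mathbf u_s)$ is split as $\le \frac{\mu}{4}e^{2\alpha s}\|A\mathbf u_s\|_0^2 + \kappa e^{2\alpha s}\|\mathbf f_s\|_0^2$, where the new hypothesis $\int_0^\infty e^{2\alpha s}\|\mathbf f_s\|_0^2\,ds<\infty$ enters. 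Choosing $\alpha < \frac{\mu\gamma_0}{4}$ (guaranteed by the standing assumption $\alpha<\frac14\min\{\delta,\frac{\mu\gamma_0}{2}\}$) makes $\mu\|A\mathbf u_s\|_0^2 - 2\alpha|\mathbf u_s|_1^2 \ge (\mu - 4\alpha/\gamma_0)\|A\mathbf u_s\|_0^2\cdot\tfrac12 + \ldots$ coercive after using $\gamma_0|\mathbf v|_1^2\le\|A\mathbf v\|_0^2$. Then the generalized Grönwall inequality of Lemma \ref{lemgenegron}, applied to $y(t) = e^{2\alpha t}|\mathbf u_t|_1^2 + \mu\int_0^t e^{2\alpha s}\|A\mathbf u_s\|_0^2\,ds$ with $h$ a bounded function (coming from the exponentially integrable force data and the initial value $|\mathbf u_t(0)|_1$, itself controlled by the hypothesis on $\mathbf f(0)$ via the Stokes estimate applied to $\mathbf u_t(0) = P\mathbf f(0)$), and the smallness condition $\frac{4\rho^2}{\mu^2}\frac{\Gamma(1-\beta)}{\delta^{1-\beta}}<\frac{(\delta-2\alpha)^{1-\beta}}{\Gamma(1-\beta)}$ of Lemma \ref{TRR1} ensuring the constant threshold in (\ref{zmh}) is met, yields $y(t)\le\kappa$, i.e. (\ref{thme1}).

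Estimate (\ref{eq3.10}) is obtained by the same differentiated equation but now tested with $\phi=\mathbf u_{tt}$: this gives $\|\mathbf u_{ss}\|_0^2$ directly, controls $\mu a(\mathbf u_s,\mathbf u_{ss}) = \frac{\mu}{2}\frac{d}{dt}|\mathbf u_s|_1^2$, bounds the trilinear and memory terms by Young's inequality with the already-known $|\mathbf u_s|_1\le\kappa e^{-\alpha s}$ and the first inequality of Lemma \ref{l3.1}, and absorbs $(\mathbf f_s,\mathbf u_{ss})$ using the same weighted-force hypothesis; integrating against $e^{2\alpha s}$ and invoking Lemma \ref{lemgenegron} again produces the first half of (\ref{eq3.10}). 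The weighted time-integral of $\|p_s\|_1^2$ then follows from the time-differentiated Stokes system: $-\mu\Delta\mathbf u_t + \nabla p_t = \mathbf f_t - \mathbf u_{tt} - \frac{d}{dt}[(\mathbf u\cdot\nabla)\mathbf u] + \rho\mathcal Q_{\beta,\delta}*\Delta\mathbf u_t$, so $\|p_t\|_1\le c^\ast\big(\|\mathbf f_t\|_0 + \|\mathbf u_{tt}\|_0 + \|b\text{-terms}\| + \|\mathcal Q_{\beta,\delta}*A\mathbf u_t\|_0\big)$; squaring, multiplying by $e^{2\alpha s}$ and integrating uses the first half of (\ref{eq3.10}) for $\|\mathbf u_{ss}\|_0^2$, (\ref{thme1}) for the memory convolution of $A\mathbf u_s$ (again via Lemma \ref{l3.1}), and the force hypothesis. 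The main obstacle I anticipate is the rigorous justification of testing the \emph{twice}-differentiated-in-time structure with $A\mathbf u_t$ and $\mathbf u_{tt}$ — which formally requires $\mathbf u\in L^2(0,t;D(A^{3/2}))$-type regularity — so the argument should really be run at the Galerkin level with uniform-in-dimension estimates and then passed to the limit; a secondary delicate point is ensuring every convolution term keeps the \emph{same} kernel $\mathcal Q_{\beta,\delta}$ so that the single generalized Grönwall inequality of Lemma \ref{lemgenegron} closes the whole system, rather than generating kernels that fail the integrability/smallness threshold.
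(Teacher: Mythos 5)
Your overall strategy matches the paper's: differentiate the weak formulation in time, test with $A\mathbf{u}_t$ for (\ref{thme1}), recover the pressure bounds from the stationary Stokes estimate of Assumption \ref{L2.0}, and use $\mathbf u(0)=\mathbf 0$ to kill the singular boundary term $\rho t^{-\beta}e^{-\delta t}a(\mathbf u(0),\cdot)$ produced by differentiating the convolution; you also correctly locate where the two new hypotheses enter ($|\mathbf u_t(0)|_1<\infty$ from the condition on $\mathbf f(0)$, and $\int_0^\infty e^{2\alpha s}\|\mathbf f_s\|_0^2\,ds$ for the forcing). The differences are in the handling of two terms. For the memory term in the $A\mathbf u_t$ energy identity, the paper does not use Young's inequality plus smallness at all: it observes that $\int_0^t e^{2\alpha s}J(s;\mathbf u_s,A\mathbf u_s)\,ds\ge 0$ by the positivity in Lemma \ref{L2.1} (after absorbing the weight into a shifted kernel $\mathcal Q_{\beta,\delta-2\alpha}$) and simply drops it, so no Gr\"onwall-type closure is needed there; your route of extracting an absorbable $\|A\mathbf u_s\|_0^2$ plus a convolution square does close under the smallness condition of Lemma \ref{TRR1} via (\ref{eqJ.2}), but note that after time-integration the leftover is a plain weighted integral of $\|A\mathbf u_s\|_0^2$, so what you actually need is direct absorption into the left-hand side, not an application of Lemma \ref{lemgenegron} (whose hypothesis $y\le h+C\mathcal Q*y$ the integrated inequality no longer has). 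For (\ref{eq3.10}), the paper avoids an energy argument altogether: it takes the $L^2$ inner product of the time-differentiated strong equation with $\mathbf u_{tt}+\nabla p_t$ and uses the orthogonality $(\mathbf u_{tt},\nabla p_t)=0$ to obtain a pointwise bound on $\|\mathbf u_{tt}\|_0^2+|p_t|_1^2$ simultaneously; your test with $\mathbf u_{tt}$ alone followed by the differentiated Stokes system for $p_t$ is an equivalent, slightly longer route. One caution: you invoke ``$|\mathbf u_t|_1\le\kappa e^{-\alpha s}$ from Lemma \ref{TRR1}'' when bounding the trilinear terms, but Lemma \ref{TRR1} only provides $\|\mathbf u_t\|_0\le\kappa e^{-\alpha t}$ pointwise and $\int_0^t e^{2\alpha s}|\mathbf u_s|_1^2\,ds\le\kappa$ in integrated form --- the pointwise decay of $|\mathbf u_t|_1$ is precisely what (\ref{thme1}) asserts, so using it there would be circular. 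The correct (and the paper's) move is to keep $|\mathbf u_s|_1^2$ under the time integral with coefficient $\sup_t\|A\mathbf u\|_0^2$ and appeal to the integrated bound of Lemma \ref{TRR1}.
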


\begin{proof}
Following the same procedure as \cite[Equations (3.1)--(3.2)]{huang}, we have \begin{align}
\nabla p=-\nabla\Delta^{-1}\nabla\cdot(\mathbf{u}\cdot\nabla\mathbf{u})+\nabla\Delta^{-1}(\nabla\cdot \mathbf{f}).\label{eq3.4}
\end{align}
We invoke this in (\ref{eq3.1}), pass the limit $t\rightarrow 0^+$ in the resulting equation and apply $\mathbf{u}(\mathbf{x},0)=\mathbf{0}$ and the assumption to get
$|\mathbf{u}_t(0)|_1^2< \infty$.
We then utilize (\ref{eq3.1}), (\ref{eq2.03}) and Assumption \ref{L2.0} to arrive at
\begin{align}\label{mh0}
&\|p\|_1^2+\|\mathbf{u}\|_2^2\leq \kappa\big(\|\mathbf{u}_t\|_0^2+\|A\mathbf{u}\|_0^2|\mathbf{u}|_1^2+ \|\mathcal Q_{\beta,\delta}* \Delta\mathbf{u}\|_0^2+\|\mathbf{f}\|_0^2\big).
\end{align}
Apply Lemma \ref{TRR1} and (\ref{eq2.4}) to obtain
\begin{align*}
&\|\mathcal Q_{\beta,\delta}* \Delta\mathbf{u}\|_0^2\leq \big(\mathcal Q_{\beta,\delta}* \|\Delta\mathbf{u}\|_0\big)^2\leq\int_0^t\mathcal Q_{\beta,\delta}(t-s)ds\int_0^t\mathcal Q_{\beta,\delta}(t-s) \|\Delta\mathbf{u}(s)\|_0^2ds\\
 &\qquad\leq \frac{c_1\Gamma(1-\beta)}{\delta^{1-\beta}}\mathcal Q_{\beta,\delta}* \|A\mathbf{u}\|_0^2\leq\kappa e^{-2\alpha t}\mathcal Q_{\beta,\delta-2\alpha}* 1\leq \kappa e^{-2\alpha t}
\end{align*}
which, together with (\ref{mh0}) and Lemma \ref{TRR1}, leads to $\|p\|_1^2\leq \kappa e^{-2\alpha t}$.

We multiply (\ref{mh0}) by $e^{2\alpha t}$, integrate it from 0 to $t$ and then multiply $e^{-2\alpha t}$ to get
\begin{align*}
&e^{-2\alpha t}\int_{0}^te^{2\alpha s}\|p\|_1^2ds+e^{-2\alpha t}\int_{0}^te^{2\alpha s}\|\mathbf{u}\|_2^2ds\nonumber\\
&\quad\leq \kappa
e^{-2\alpha t}\int_{0}^te^{2\alpha s}\|\mathbf{u}_t\|_0^2ds+\kappa \sup_{t\geq 0}|\mathbf{u}|_1^2e^{-2\alpha t}\int_{0}^te^{2\alpha s}\|A\mathbf{u}\|_0^2ds\nonumber\\
&\quad\quad+\kappa e^{-2\alpha t}\int_{0}^te^{2\alpha s}(\mathcal Q_{\beta,\delta}*\| A\mathbf{u}\|_0)^2ds+\kappa e^{-2\alpha t}\int_{0}^te^{2\alpha s}\|\mathbf{f}\|_0^2ds.
\end{align*}
Then, we apply (\ref{eqJ.2}) (cf. $I(A\mathbf{u})$) and Lemma \ref{TRR1} to obtain
\begin{align*}
&e^{-2\alpha t}\int_{0}^te^{2\alpha s}\|p\|_1^2ds+e^{-2\alpha t}\int_{0}^te^{2\alpha s}\|\mathbf{u}\|_2^2ds\leq \kappa
e^{-2\alpha t}.
\end{align*}

Next, we differentiate (\ref{eq3.7f}) in time, and take $(\mathbf{\phi},q)=e^{2\alpha t}(A\mathbf{u}_t\in H,0)$ to get
\begin{align}
&e^{2\alpha t}(\mathbf{u}_{tt},A\mathbf{u}_t)+\mu e^{2\alpha t}a(\mathbf{u}_t,A\mathbf{u}_t)+e^{2\alpha t}b(\mathbf{u}_t,\mathbf{u},A\mathbf{u}_t)+e^{2\alpha t}b(\mathbf{u},\mathbf{u}_t,A\mathbf{u}_t)\nonumber\\
&\qquad+e^{2\alpha t} J(t;\mathbf{u}_t,A\mathbf{u}_t)=-\rho \mathcal Q_{\beta,\delta-2\alpha}(t)a(\mathbf{u}(0),A\mathbf{u}_t)+e^{2\alpha t}(\mathbf{f}_t(t),A\mathbf{u}_t).\label{eq3.22f}
\end{align}
We note $-\rho \mathcal Q_{\beta,\delta-2\alpha}(t)a(\mathbf{u}(0),A\mathbf{u}_t)=0$ (cf. $\mathbf{u}(0)=\mathbf{0}$) and apply (\ref{eq2.03})--(\ref{eq2.04}) to obtain
\begin{align}
&e^{2\alpha t}(|b(\mathbf{u}_t,\mathbf{u},A\mathbf{u}_t)|+|b(\mathbf{u},\mathbf{u}_t,A\mathbf{u}_t)|)
\leq  e^{2\alpha t}[ \frac{\mu}{8}\|A\mathbf{u}_t\|_0^2+\frac{4c_0^2\gamma_0^{\frac{1}{2}} }{\mu}\|A\mathbf{u}\|_0^2|\mathbf{u}_t|_1^2 ],\nonumber\\
&e^{2\alpha t}(\mathbf{f}_t(t),A\mathbf{u}_t)\leq\frac{\mu}{8}e^{2\alpha t}\|A\mathbf{u}_t\|_0^2+\frac{2}{\mu} e^{2\alpha t}\|\mathbf{f}_t(t)\|_{0}^2.\nonumber
\end{align}
We invoke these inequalities in (\ref{eq3.22f}), multiply the resulting equation by 2 and apply  $2\alpha|\mathbf{u}_t|_1^2\leq \frac{\mu\gamma_0}{2}|\mathbf{u}_t|_1^2\leq \frac{\mu}{2}\|A\mathbf{u}_t\|_0^2$ (cf. (\ref{eq2.4})) on the left-hand side to get
\begin{align*}
&e^{2\alpha t}\frac{d}{dt}|\mathbf{u}_t|_1^2+2\alpha e^{2\alpha t}|\mathbf{u}_t|_1^2+\mu e^{2\alpha t}\|A\mathbf{u}_t\|_0^2+2e^{2\alpha t} J(t;\mathbf{u}_t,A\mathbf{u}_t)\nonumber\\
&\quad\quad\leq \frac{8c_0^2\gamma_0^{\frac{1}{2}}}{\mu} e^{2\alpha t}\|A\mathbf{u}\|_0^2|\mathbf{u}_t|_1^2+\frac{4}{\mu}  e^{2\alpha t}\|\mathbf{f}_t(t)\|_{0}^2.
\end{align*}
 We integrate this equation from $0$ to $t$, and apply the positivity of $J$ from Lemma \ref{L2.1}, Lemma \ref{TRR1} and $\frac{d}{dt}(e^{2\alpha t}|\mathbf{u}_t|_1^2)=e^{2\alpha t}\frac{d}{dt}|\mathbf{u}_t|_1^2+2\alpha e^{2\alpha t}|\mathbf{u}_t|_1^2$ to obtain
\begin{align*}
e^{2\alpha t}|\mathbf{u}_t|_1^2+\mu\int_{0}^te^{2\alpha s}\|A\mathbf{u}_s\|_0^2ds&\leq|\mathbf{u}_t(0)|_1^2+\frac{8c_0^2\gamma_0^{\frac{1}{2}}}{\mu} \sup_{t\geq0}\|A\mathbf{u}\|_0^2\int_{0}^te^{2\alpha s}|\mathbf{u}_s|_1^2ds\\
&+\frac{4}{\mu}\int_{0}^te^{2\alpha s}\|\mathbf{f}_s(s)\|_{0}^2ds\leq\kappa.
\end{align*}
We multiply the above equation by $e^{-2\alpha t}$ to complete the proof of (\ref{thme1}).

We differentiate (\ref{eq3.1}) with respect to time, take inner product with $\mathbf{u}_{tt}+\nabla p_t$ to the resulting equation, and apply $2(\mathbf{u}_{tt},\nabla p_t)=0$, $\mathbf{u}(\mathbf{x},0)=\mathbf{0}$, (\ref{eq2.03})-(\ref{eq2.04}) and Lemma \ref{l3.1} to get
\begin{align*}
&\|\mathbf{u}_{tt}\|_0^2+|p_t|_1^2\leq
\kappa\big(\|A\mathbf{u}_t\|_0^2+\|A\mathbf{u}\|_0^2|\mathbf{u}_t|_1^2+(\mathcal Q_{\beta,\delta}*\|A\mathbf{u}_t\|_0)^2+\|\mathbf{f}_t(t)\|_0^2\big).
\end{align*}
We multiply this by $e^{2\alpha t}$, integrate the resulting equation from $0$ to $t$ to get
\begin{align}
&\int_0^te^{2\alpha s}\|\mathbf{u}_{ss}\|_0^2ds\leq \kappa \int_0^te^{2\alpha s}\|A\mathbf{u}_s\|_0^2ds+\kappa \sup_{t\geq 0}\|A\mathbf{u}\|_0^2\int_0^te^{2\alpha s}|\mathbf{u}_s|_1^2ds\nonumber\\
&\quad\quad+\kappa \int_0^te^{2\alpha s} (\mathcal Q_{\beta,\delta}*\|A\mathbf{u}_s\|_0)^2ds+\kappa\int_0^te^{2\alpha s}\|\mathbf{f}_t(s)\|_0^2ds.\label{eq3.0}
\end{align}
Then we multiply (\ref{eq3.0}) by $e^{-2\alpha t}$, use (\ref{eqJ.2}) (cf. $I(A\mathbf{u}_s)$), (\ref{thme1}) and Lemma \ref{TRR1} to yield
\begin{align}
e^{-2\alpha t}\int_0^te^{2\alpha s}\|\mathbf{u}_{ss}\|_0^2ds&\leq \kappa e^{-2\alpha t} \left[1+\int_0^{t}e^{2\alpha s}\|A\mathbf{u}_s\|_0^2ds \right] \leq \kappa e^{-2\alpha t}.\label{mh1}
\end{align}

Finally, we differentiate (\ref{eq3.1}) with respect to $t$ and apply Assumption \ref{L2.0} and $\mathbf{u}(0)=\mathbf{0}$ to obtain
\begin{align*}
\|p_t\|_1^2+\|\mathbf{u}_t\|_2^2&\leq \kappa\big(\|\mathbf{u}_{tt}\|_0^2+\|A\mathbf{u}\|_0^2|\mathbf{u}_t|_1^2+\|\mathcal Q_{\beta,\delta}*\Delta\mathbf{u}_t\|_0^2+\|\mathbf{f}_t(t)\|_0^2\big).
\end{align*}
We multiply this equation by $e^{2\alpha t}$, integrate the resulting equation from $0$ to $t$, then use (\ref{eq2.4}) and multiply it by $e^{-2\alpha t}$ to get
\begin{align*}
&e^{-2\alpha t}\int_0^te^{2\alpha s}\|p_s\|_1^2ds\leq \kappa e^{-2\alpha t}\int_0^te^{2\alpha s}\|\mathbf{u}_{ss}\|_0^2ds+\kappa (\sup_{t\geq0}\|A\mathbf{u}\|_0^2)e^{-2\alpha t}\int_0^te^{2\alpha s}|\mathbf{u}_s|_1^2ds\nonumber\\
&+\kappa e^{-2\alpha t}\int_0^te^{2\alpha s}(\mathcal Q_{\beta,\delta}*\|A\mathbf{u}_s\|_0)^2ds+\kappa e^{-2\alpha t}\int_0^te^{2\alpha s}\|\mathbf{f}_s(s)\|_0^2ds
\end{align*}
which, together with Lemma \ref{TRR1}, (\ref{mh1}), (\ref{eqJ.2}) (cf. $I(A\mathbf{u}_s)$) and (\ref{thme1}), leads to the estimate of $p_t$ in (\ref{eq3.10}).
\end{proof}

\section{Numerical scheme and auxiliary estimates}\label{sec4}
\subsection{Finite element approximation}

Let $J_h=J_h(\Omega)$ be a quasi-uniform partition of $\Omega$ with the maximal element diameter $h$ and define the finite element space $(X_h,M_h)\subset(X,M)$ on $J_h$ such that the following properties hold \cite{cia,He2007}: For every $\mathbf{v}\in D(A)$, there exists an approximation $j_h\mathbf{v}\in X_h$ such that
\begin{eqnarray}
\|\mathbf{v}-j_h\mathbf{v}\|_{0}+h|\mathbf{v}-j_h\mathbf{v}|_{1}\leq ch^{i}\|\mathbf{v}\|_{i}, ~~for ~1\leq i\leq 2,\label{eq4.0}
\end{eqnarray}
and for each
$q\in H^{i-1}(\Omega)\cap M$ with $i=1,2,3$, there exists $\rho_hq\in M_h$ such that
\begin{eqnarray}
 \|q-\rho_hq\|_{0}\leq ch^{i-1}\|q\|_{i-1}\label{eq4.1}
\end{eqnarray}
together with the inverse inequality
\begin{eqnarray}
|\mathbf{v}_h|_{1}\leq ch^{-1}\|\mathbf{v}_h\|_{0},~~ \mathbf{v}_h\in X_h
\label{eq4.10}
\end{eqnarray}
and the inf-sup
inequality: for each $q_h\in M_h$, there exists $0\neq \mathbf{v}_h\in X_h$ such that
$
d(\mathbf{v}_h,q_h)\geq c\|q_h\|_{0}|\mathbf{v}_h|_{1}$
for some positive constant $c$ depending on
$\Omega$.

Let $V_h:=\big\{\mathbf{v}_h\in X_h; \ d(\mathbf{v}_h,q_h)=0, \ \forall q_h\in
M_h\big\}$ and define the $L^2$-orthogonal projection $P_h:Y\rightarrow V_h$ by $(P_h\mathbf{v}-\mathbf{v},\mathbf{v}_h)=0$ for $\mathbf{v}\in Y$ and $\mathbf{v}_h\in V_h$. Then we could define a discrete analogue of the Stokes operator by $A_h=-P_h\Delta_h
$  where $-\Delta_h$ is defined as
$(-\Delta_h\mathbf{u}_h,\mathbf{v}_h)=(\nabla \mathbf{u}_h,\nabla \mathbf{v}_h)$
for all $\mathbf{u}_h,\mathbf{v}_h\in X_h$. The restriction of $A_h$ to $V_h$ is invertible, and the inverse $A_h^{-1}$ is positive definite and self-adjoint \cite{He2007,heywood1}. Thus, we define the following ``discrete'' Sobolev norm of order $r\in\mathbb R$ on $V_h$ by $
\|\mathbf{v}_h\|_{r}:=\|A_h^{r/2}\mathbf{v}_h\|_{L^2}$ for $ \mathbf{v}_h\in V_h$, which satisfies
$$
\|\mathbf{v}_h\|_0=\|\mathbf{v}_h\|_{L^2}, ~~\|\mathbf{v}_h\|_1=\|\nabla
\mathbf{v}_h\|_{0},~~\|\mathbf{v}_h\|_2=\|A_h\mathbf{v}_h\|_{0},~~ \mathbf{v}_h\in V_h.
$$
Moreover, these and (\ref{eq2.4}) imply
$\gamma_0\|\mathbf{v}_h\|_0\leq \|\nabla \mathbf{v}_h\|_0$ and $\gamma_0\|\nabla \mathbf{v}_h\|_0\leq \|A_h\mathbf{v}_h\|_0$ for $ \mathbf{v}_h\in V_h$. 
Then the following properties hold from (\ref{eq4.0}) and (\ref{eq4.1}) \cite{heywood1,heywood3}
\begin{align}
 |P_h\mathbf{v}|_{1}\leq c|\mathbf{v}|_{1},\ \ \ \|\mathbf{v}-P_h\mathbf{v}\|_{0}&\leq ch|\mathbf{v}-P_h\mathbf{v}|_{1},\ \ \ \forall \mathbf{v}\in X,\label{eq4P1}\\
 \|\mathbf{v}-P_h\mathbf{v}\|_{0}+h|\mathbf{v}-P_h\mathbf{v}|_{1}+h^2\|\Delta_hP_h\mathbf{v}\|_0&\leq ch^2\|A\mathbf{v}\|_{0},\ \ \forall \mathbf{v}\in D(A).\label{eq4P2}
\end{align}

With these preliminaries, the semi-discrete in space finite element approximation of \eqref{eq3.1} reads: find $(u_h,p_h)\in (X_h,M_h)$ such
that $\mathbf{u}_h(0):=P_h\mathbf{u}(0)$ and
\begin{align}
&(\mathbf{u}_{ht},\mathbf{\phi}_h)+\mu a(\mathbf{u}_h,\mathbf{\phi}_h)+J(t;\mathbf{u}_h,\mathbf{\phi}_h)-d(\mathbf{\phi}_h,p_h)+d(\mathbf{u}_h,q_h)+b(\mathbf{u}_h,\mathbf{u}_h,\mathbf{\phi}_h)\nonumber\\
&=(\mathbf{f},\mathbf{\phi}_h),~~\forall (\mathbf{\phi}_h,q_h)\in( X_h, M_h),~~t\in(0,\infty).\label{eq4.8}
\end{align}

\subsection{Auxiliary estimates}
We define an auxiliary function $\mathbf{v}_h\in V_h$ by
\begin{eqnarray}
(\mathbf{v}_{ht},\mathbf{\phi}_h)+\mu a(\mathbf{v}_h,\mathbf{\phi}_h)+J(t;\mathbf{v}_h,\mathbf{\phi}_h)=(\mathbf{f},\mathbf{\phi}_h)-b(\mathbf{u},\mathbf{u},\mathbf{\phi}_h),\ \ \mathbf{v}_h(0)=P_h\mathbf{u}(0),\label{eqb3.5}
\end{eqnarray}
for all $\mathbf{\phi}_h\in V_h$. The solution ($\mathbf{u}\in V\cap H^2(\Omega)^2$, $p(t)\in H^1(\Omega)/R$) of (\ref{eq3.1}) satisfies
\begin{eqnarray}
(\mathbf{u}_t,\mathbf{\phi})+\mu a(\mathbf{u},\mathbf{\phi})+b(\mathbf{u},\mathbf{u},\mathbf{\phi})-d(\mathbf{\phi},p)+J(t;\mathbf{u},\mathbf{\phi})=(\mathbf{f},\mathbf{\phi}),~\forall\mathbf{\phi}\in X.\label{eq3.7fp}
\end{eqnarray}
We subtract (\ref{eqb3.5}) from (\ref{eq3.7fp}) and set $\mathbf{\phi}=\mathbf{\phi}_h$ and define $\mathbf{\xi}=\mathbf{u}-\mathbf{v}_h$ to obtain
\begin{eqnarray}
(\mathbf{\xi}_{t},\mathbf{\phi}_h)+\mu a(\xi,\mathbf{\phi}_h)+J(t;\mathbf{\xi},\mathbf{\phi}_h)=d(\mathbf{\phi}_h,p),~\forall\mathbf{\phi}_h\in V_h.\label{eqb3.6}
\end{eqnarray}
\begin{lemma} \label{l3.8} The solution $\mathbf{v}_h$ in (\ref{eqb3.5}) satisfies
\begin{align*}
\|\mathbf{v}_h\|_{L^\infty}+\|\nabla\mathbf{v}_h\|_{L^3}\leq c|\mathbf{u}|_1^{\frac{1}{2}}\|A\mathbf{u}\|_0^{\frac{1}{2}}+ch^{-\frac{3}{2}}\|\mathbf{\xi}\|_0+ch^{\frac{1}{2}}\|A\mathbf{u}\|_0,\ \ \ \forall \ t\geq 0.
\end{align*}
\end{lemma}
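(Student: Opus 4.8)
The plan is to split $\mathbf{v}_h$ into an interpolant of the exact velocity plus a discrete remainder, estimating the interpolant part by the two--dimensional Sobolev/Agmon/Gagliardo--Nirenberg inequalities and the remainder by inverse inequalities. Write $\mathbf{v}_h=j_h\mathbf{u}+\mathbf{w}_h$ with $\mathbf{w}_h:=\mathbf{v}_h-j_h\mathbf{u}\in X_h$. Since $\mathbf{w}_h=(\mathbf{u}-j_h\mathbf{u})-(\mathbf{u}-\mathbf{v}_h)=(\mathbf{u}-j_h\mathbf{u})-\mathbf{\xi}$, the approximation property (\ref{eq4.0}) with $i=2$ together with (\ref{eq2.4}) gives $\|\mathbf{w}_h\|_0\leq ch^2\|\mathbf{u}\|_2+\|\mathbf{\xi}\|_0\leq ch^2\|A\mathbf{u}\|_0+\|\mathbf{\xi}\|_0$. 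Only the $L^2$-norm of $\mathbf{\xi}$ enters, which is what is available at this stage, so the whole argument is arranged around that; all constants will be independent of $h$ and $t$, hence the bound holds pointwise in $t$.

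For the interpolant part I would first bound $\|\mathbf{u}\|_{L^\infty}$ and $\|\nabla\mathbf{u}\|_{L^3}$ by Agmon's inequality $\|\mathbf{u}\|_{L^\infty}\leq c\|\mathbf{u}\|_0^{1/2}\|\mathbf{u}\|_2^{1/2}$ and the Gagliardo--Nirenberg inequality $\|\nabla\mathbf{u}\|_{L^3}\leq c|\mathbf{u}|_1^{2/3}\|\mathbf{u}\|_2^{1/3}$ (both valid in two space dimensions); invoking $\gamma_0\|\mathbf{u}\|_0^2\leq|\mathbf{u}|_1^2$, $\|\mathbf{u}\|_2^2\leq c_1\|A\mathbf{u}\|_0^2$ and $\gamma_0|\mathbf{u}|_1^2\leq\|A\mathbf{u}\|_0^2$ from (\ref{eq2.4}), each reduces to $c|\mathbf{u}|_1^{1/2}\|A\mathbf{u}\|_0^{1/2}$. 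For the interpolation error I would use $\|\mathbf{u}-j_h\mathbf{u}\|_{W^{1,p}}\leq ch^{2/p}\|\mathbf{u}\|_2$ for $\mathbf{u}\in D(A)$ and $p\geq2$, which follows from (\ref{eq4.0}) and the inverse inequality (\ref{eq4.10}) on the quasi-uniform mesh (an elementwise Bramble--Hilbert bound summed with $\ell^2\hookrightarrow\ell^p$), combined with $W^{1,p}(\Omega)\hookrightarrow L^\infty(\Omega)$ for $p>2$: choosing $p=4$ yields $\|\mathbf{u}-j_h\mathbf{u}\|_{L^\infty}\leq ch^{1/2}\|\mathbf{u}\|_2$ and choosing $p=3$ yields $\|\nabla(\mathbf{u}-j_h\mathbf{u})\|_{L^3}\leq ch^{2/3}\|\mathbf{u}\|_2\leq ch^{1/2}\|\mathbf{u}\|_2$ for $h\leq1$. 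Combining, $\|j_h\mathbf{u}\|_{L^\infty}+\|\nabla j_h\mathbf{u}\|_{L^3}\leq c|\mathbf{u}|_1^{1/2}\|A\mathbf{u}\|_0^{1/2}+ch^{1/2}\|A\mathbf{u}\|_0$; the second term cannot be absorbed into the first and supplies exactly the $ch^{1/2}\|A\mathbf{u}\|_0$ term in the claim.

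For the discrete remainder I would apply the two-dimensional inverse inequalities on $X_h$: $\|\mathbf{w}_h\|_{L^\infty}\leq ch^{-1}\|\mathbf{w}_h\|_0$, and $\|\nabla\mathbf{w}_h\|_{L^3}\leq ch^{-1}\|\mathbf{w}_h\|_{L^3}\leq ch^{-1}\cdot ch^{-1/2}\|\mathbf{w}_h\|_0=ch^{-3/2}\|\mathbf{w}_h\|_0$, where the $L^3$--$L^2$ step uses a convenient (non-optimal) exponent that dominates the sharp one; hence $\|\mathbf{w}_h\|_{L^\infty}+\|\nabla\mathbf{w}_h\|_{L^3}\leq ch^{-3/2}\|\mathbf{w}_h\|_0$. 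Inserting the bound on $\|\mathbf{w}_h\|_0$ gives $ch^{1/2}\|A\mathbf{u}\|_0+ch^{-3/2}\|\mathbf{\xi}\|_0$, and adding the two parts completes the proof. The main technical point, and the origin of the fractional powers of $h$, is the need for $L^\infty$ and $W^{1,3}$ control: of the interpolation error for merely $H^2$ data in two dimensions, handled through the $W^{1,p}$ interpolation estimate and Sobolev embedding since $W^{2,\infty}$- and $W^{2,3}$-regularity are unavailable; and of the discrete remainder, whose $W^{1,3}$-norm costs $h^{-3/2}$ via the chain of 2D inverse estimates applied to a quantity controlled only in $L^2$. Everything else is bookkeeping of norms via (\ref{eq2.4}), (\ref{eq4.0}) and (\ref{eq4.10}).
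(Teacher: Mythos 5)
Your proof is correct and lands on the right bound, but it takes a genuinely different route from the paper. The paper decomposes $\mathbf{v}_h=\mathbf{u}+(P_h\mathbf{u}-\mathbf{u})+(\mathbf{v}_h-P_h\mathbf{u})$ using the $L^2$-projection $P_h$ onto $V_h$, and its one key tool for the discrete pieces is the discrete Agmon-type inequality $\|\mathbf{\phi}_h\|_{L^\infty}+\|\nabla\mathbf{\phi}_h\|_{L^3}\leq c|\mathbf{\phi}_h|_1^{\frac{1}{2}}\|\Delta_h\mathbf{\phi}_h\|_0^{\frac{1}{2}}$ of Heywood--Rannacher: applied to $P_h\mathbf{u}$ it gives $c|\mathbf{u}|_1^{\frac{1}{2}}\|A\mathbf{u}\|_0^{\frac{1}{2}}$ directly via (\ref{eq4P1})--(\ref{eq4P2}), and applied to $\mathbf{v}_h-P_h\mathbf{u}$ together with two uses of the single stated inverse inequality (\ref{eq4.10}) it yields the factor $h^{-\frac{3}{2}}$; the term $ch^{\frac{1}{2}}\|A\mathbf{u}\|_0$ then arises from $h^{-\frac{3}{2}}\|\mathbf{u}-P_h\mathbf{u}\|_0\leq ch^{\frac{1}{2}}\|A\mathbf{u}\|_0$, exactly as in your remainder bound. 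You instead split off the interpolant $j_h\mathbf{u}$, rebuild the continuous estimate $\|\mathbf{u}\|_{L^\infty}+\|\nabla\mathbf{u}\|_{L^3}\leq c|\mathbf{u}|_1^{\frac{1}{2}}\|A\mathbf{u}\|_0^{\frac{1}{2}}$ from Agmon and Gagliardo--Nirenberg (the paper simply quotes it from Temam), and replace the discrete Agmon inequality by $W^{1,p}$ interpolation error bounds plus Sobolev embedding and by $L^p$--$L^2$ inverse estimates. The paper's route is shorter and stays strictly within its stated toolbox; yours is more self-contained on the continuous side but imports two facts the paper never states: the $L^\infty$--$L^2$ and $L^p$--$L^2$ inverse estimates on $X_h$ (standard on quasi-uniform meshes, harmless), and the estimate $\|\mathbf{u}-j_h\mathbf{u}\|_{W^{1,p}}\leq ch^{2/p}\|\mathbf{u}\|_2$. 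The latter is the one soft spot: (\ref{eq4.0}) characterizes $j_h$ only in $L^2$ and $H^1$, so deriving the $W^{1,p}$ bound requires comparing $j_h\mathbf{u}$ to a local interpolant with elementwise Bramble--Hilbert bounds and then applying inverse estimates to the finite element difference --- your parenthetical gestures at this, but it is an extra structural assumption on $j_h$ rather than a consequence of (\ref{eq4.0}) and (\ref{eq4.10}) alone. The paper avoids this entirely by never measuring the approximation error in $L^\infty$ or $W^{1,3}$: all supercritical norms are taken only of functions in $V_h$ or in $D(A)$, where the two Agmon-type inequalities apply.
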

\begin{proof} We first split $\|\mathbf{v}_h\|_{L^\infty}$ as
$
\|\mathbf{v}_h\|_{L^\infty} \leq\|\mathbf{u}\|_{L^\infty}+\|\mathbf{u}-P_h\mathbf{u}\|_{L^\infty}+\|\mathbf{v}_h-P_h\mathbf{u}\|_{L^\infty} \leq 2\|\mathbf{u}\|_{L^\infty}+\|P_h\mathbf{u}\|_{L^\infty}+\|\mathbf{v}_h-P_h\mathbf{u}\|_{L^\infty},
$
and we could similarly split $\|\nabla\mathbf{v}_h\|_{L^3}$. We use the estimates $\|\mathbf{\phi}\|_{L^\infty}+\|\nabla\mathbf{\phi}\|_{L^3}\leq c|\mathbf{\phi}|_1^{\frac{1}{2}}\|A\mathbf{\phi}\|_0^{\frac{1}{2}}$ for $ \phi\in D(A)$ \cite{te1} and $\|\mathbf{\phi}_h\|_{L^\infty}+\|\nabla\mathbf{\phi}_h\|_{L^3}\leq c|\mathbf{\phi}_h|_1^{\frac{1}{2}}\|\Delta_h\mathbf{\phi}_h\|_0^{\frac{1}{2}}$ for $ \phi_h\in V_h$ \cite{heywood1}, (\ref{eq4.10}) and (\ref{eq4P2}) to yield
\begin{align*}
&\|\mathbf{v}_h\|_{L^\infty}+\|\nabla\mathbf{v}_h\|_{L^3}\leq 2c|\mathbf{u}|_1^{\frac{1}{2}}\|A\mathbf{u}\|_0^{\frac{1}{2}}+c|P_h\mathbf{u}|_1^{\frac{1}{2}}\|\Delta_hP_h\mathbf{u}\|_0^{\frac{1}{2}}\\
&+c|\mathbf{v}_h-P_h\mathbf{u}|_1^{\frac{1}{2}}\|\Delta_h(\mathbf{v}_h-P_h\mathbf{u})\|_0^{\frac{1}{2}}\leq 3c|\mathbf{u}|_1^{\frac{1}{2}}\|A\mathbf{u}\|_0^{\frac{1}{2}}+ch^{-\frac{3}{2}}\|\mathbf{v}_h-P_h\mathbf{u}\|_0\\
&\leq 3c|\mathbf{u}|_1^{\frac{1}{2}}\|A\mathbf{u}\|_0^{\frac{1}{2}}+ch^{-\frac{3}{2}}\|\mathbf{u}-P_h\mathbf{u}\|_0+ch^{-\frac{3}{2}}\|\mathbf{u}-\mathbf{v}_h\|_0,
\end{align*}
which leads to the desired estimate by using $\mathbf{\xi}=\mathbf{u}-\mathbf{v}_h$ and (\ref{eq4P2}).
\end{proof}

\subsection{Parabolic duality argument}
We use a parabolic duality argument to facilitate the long-time $L^2$ error estimate of the velocity and its exponent decay. For fixed $t\geq0$, let $(\mathbf{w}(\varsigma), \chi(\varsigma))\in(V,M)$ be the solution of the ``backward''  problem
\begin{eqnarray}
&\ds\mathbf{w}_\varsigma+\mu\Delta \mathbf{w}+
\rho\int_\varsigma^t (s-\varsigma)^{-\beta}e^{-\delta(s-\varsigma)}\Delta \mathbf{w}(s)ds-\nabla
\chi=e^{2\alpha\varsigma}\mathbf{\xi}(\varsigma), \nonumber \\
&  \varsigma\in(t,0], \  \mathbf{w}(t)=\mathbf{0}.\label{eqb3.1}
\end{eqnarray}
For any $\mathbf{\phi}\in X$, we also define $J_b(\varsigma;\mathbf{w},\mathbf{\phi})=-\rho(\mathbf{\phi},\int_\varsigma^t (s-\varsigma)^{-\beta}e^{-\delta(s-\varsigma)}\Delta \mathbf{w}(s)ds)$ which satisfies for any $\mathbf{\phi}, \mathbf{\psi}\in X$
\begin{align}
&\int_0^tJ_b(\varsigma;\mathbf{\psi},\mathbf{\phi}(\varsigma))d\varsigma=\rho\int_0^t \int_s^t(\varsigma-s)^{-\beta}e^{-\delta(\varsigma-s)}a\big( \mathbf{\phi}(s),\mathbf{\psi}(\varsigma)\big)d\varsigma ds\nonumber\\
&=\int_0^t\rho\int_0^\varsigma(\varsigma-s)^{-\beta}e^{-\delta(\varsigma-s)}a\big(\mathbf{\phi}(s),\mathbf{\psi}(\varsigma)\big)dsd\varsigma=\int_0^tJ(\varsigma;\mathbf{\phi},\mathbf{\psi}(\varsigma))d\varsigma \label{ll3.2}.
\end{align}

\begin{lemma}\label{l3.2} The solution $(\mathbf{w},\chi)$ of the ``backward'' problem (\ref{eqb3.1}) satisfies
\begin{align*}
&\int_0^te^{-2\alpha \varsigma}\big(\mu^2 \|A\mathbf{w}\|_0^2+\|\mathbf{w}_\varsigma\|_0^2+\|\chi\|_1^2\big)d\varsigma\leq \theta\int_0^te^{2\alpha \varsigma}\|\mathbf{\xi}\|_0^2d\varsigma,
\end{align*}
where $\theta=41+\frac{30\rho^2}{\mu^2}\frac{\Gamma^2(1-\beta)}{[\delta(\delta-2\alpha)]^{1-\beta}}$.
\end{lemma}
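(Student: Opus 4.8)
The estimate is a standard energy bound for the backward Volterra–Stokes system \eqref{eqb3.1}, so the plan is to test \eqref{eqb3.1} against suitable multipliers on the time interval $[0,t]$ and use the positivity of the memory form together with the spectral inequalities \eqref{eq2.4}. First I would rewrite the backward equation by the change of variable $\varsigma\mapsto t-\varsigma$, turning it into a forward parabolic Volterra equation with a tempered power-law kernel of the same type $\mathcal Q_{\beta,\delta}$, so that Lemma \ref{L2.1} (positivity of $\int_0^\tau(\mathcal Q_{\beta,\delta}*\phi)\phi$) and the convolution bound \eqref{gambnd} apply verbatim. Then the source term becomes $e^{2\alpha\varsigma}\boldsymbol\xi$, and I would carry the weight $e^{-2\alpha\varsigma}$ (equivalently $e^{2\alpha(t-\varsigma)}$ after the flip) through all the integrations so that the right-hand side produces exactly $\int_0^t e^{2\alpha\varsigma}\|\boldsymbol\xi\|_0^2\,d\varsigma$.

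\smallskip

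\textbf{Step 1 (bound on $\|A\mathbf w\|_0$).} Test \eqref{eqb3.1} with $-e^{-2\alpha\varsigma}P(\mathbf w_\varsigma+\mu\Delta\mathbf w)$, or more simply integrate the identity obtained by pairing with $e^{-2\alpha\varsigma}A\mathbf w$ and with $e^{-2\alpha\varsigma}\mathbf w_\varsigma$ in combination. Concretely, pairing \eqref{eqb3.1} with $-e^{-2\alpha\varsigma}A\mathbf w$ gives, using $(\nabla\chi,A\mathbf w)=0$ on $V$ and $(\mathbf w_\varsigma,A\mathbf w)=\tfrac12\tfrac{d}{d\varsigma}|\mathbf w|_1^2$,
\begin{align*}
-\tfrac12 e^{-2\alpha\varsigma}\tfrac{d}{d\varsigma}|\mathbf w|_1^2+\mu e^{-2\alpha\varsigma}\|A\mathbf w\|_0^2 - e^{-2\alpha\varsigma}\big(\mathcal Q_{\beta,\delta}^{\,b}*A\mathbf w,A\mathbf w\big) = -e^{2\alpha\varsigma}(\boldsymbol\xi,A\mathbf w),
\end{align*}
where $\mathcal Q_{\beta,\delta}^{\,b}$ denotes the backward kernel. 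After the time flip the memory term is $+\int_0^{\tau}\mathcal Q_{\beta,\delta}(\tau-r)(A\mathbf w(r),A\mathbf w(\tau))\,dr$ with the correct sign, whose time-integral is $\geq 0$ by Lemma \ref{L2.1} once the weight is absorbed into the kernel as $\mathcal Q_{\beta,\delta-2\alpha}$ (using $0\le 2\alpha<\delta$). Integrating in $\varsigma$ from $0$ to $t$, using $\mathbf w(t)=\mathbf 0$, and applying Young's inequality $|e^{2\alpha\varsigma}(\boldsymbol\xi,A\mathbf w)|\le \tfrac{\mu}{2}e^{-2\alpha\varsigma}\|A\mathbf w\|_0^2+\tfrac{1}{2\mu}e^{6\alpha\varsigma}\|\boldsymbol\xi\|_0^2$—or, keeping the source scaled as $e^{2\alpha(t-\varsigma)}\boldsymbol\xi$ after the flip, directly $\tfrac{\mu}{2}e^{-2\alpha\varsigma}\|A\mathbf w\|_0^2+Ce^{2\alpha\varsigma}\|\boldsymbol\xi\|_0^2$—absorbs the $\|A\mathbf w\|_0^2$ term and yields $\int_0^t e^{-2\alpha\varsigma}\|A\mathbf w\|_0^2\,d\varsigma\le C\int_0^t e^{2\alpha\varsigma}\|\boldsymbol\xi\|_0^2\,d\varsigma$. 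The explicit constant $\theta$ is tracked by combining the memory-term estimate of Lemma \ref{l3.1} (second inequality with $r=1$, $\hat\beta=\beta$) with the Young constants.

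\smallskip

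\textbf{Step 2 ($\mathbf w_\varsigma$ and $\chi$).} Once $\|A\mathbf w\|_{L^2_{e^{-2\alpha\varsigma}}}$ is controlled, read off $\mathbf w_\varsigma$ from the equation: $\mathbf w_\varsigma = -\mu\Delta\mathbf w - \rho(\text{memory term}) + \nabla\chi + e^{2\alpha\varsigma}\boldsymbol\xi$; projecting by $P$ kills $\nabla\chi$, so $\|P\mathbf w_\varsigma\|_0\le \mu\|A\mathbf w\|_0+\rho\|\mathcal Q_{\beta,\delta}^{\,b}*\Delta\mathbf w\|_0+e^{2\alpha\varsigma}\|\boldsymbol\xi\|_0$; but $\mathbf w_\varsigma\in V$ (differentiating the divergence constraint) so $P\mathbf w_\varsigma=\mathbf w_\varsigma$. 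Squaring, integrating against $e^{-2\alpha\varsigma}$, and bounding the memory term again via Lemma \ref{l3.1} in terms of $\int_0^t e^{-2\alpha\varsigma}\|A\mathbf w\|_0^2$ gives the $\|\mathbf w_\varsigma\|_0^2$ contribution to $\theta$. For the pressure, use Assumption \ref{L2.0}: $\nabla\chi = \mathbf w_\varsigma+\mu\Delta\mathbf w+\rho(\text{memory})-e^{2\alpha\varsigma}\boldsymbol\xi$ together with $\mathrm{div}\,\mathbf w=0$ shows $(\mathbf w,\chi)$ solves a steady Stokes problem with right-hand side $-(\mathbf w_\varsigma+\rho(\text{memory})-e^{2\alpha\varsigma}\boldsymbol\xi)\in Y$, whence $\|\chi\|_1\le c^\ast(\|\mathbf w_\varsigma\|_0+\rho\|\text{memory}\|_0+e^{2\alpha\varsigma}\|\boldsymbol\xi\|_0)$; squaring and integrating against $e^{-2\alpha\varsigma}$ and reusing Steps 1–2 closes the estimate with the final $\theta=41+\tfrac{30\rho^2}{\mu^2}\tfrac{\Gamma^2(1-\beta)}{[\delta(\delta-2\alpha)]^{1-\beta}}$.

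\smallskip

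\textbf{Main obstacle.} The delicate point is handling the memory term with the right sign \emph{and} the right weight simultaneously: the positivity lemma (Lemma \ref{L2.1}) applies to unweighted convolutions, so one must either flip time and absorb $e^{-2\alpha\varsigma}$ into the kernel to get $\mathcal Q_{\beta,\delta-2\alpha}$ before invoking positivity, or else split the weighted memory term into a positive part plus a commutator that is estimated (not signed) via the $L^2$-in-time bound of Lemma \ref{l3.1}. Getting the numerical constant $\theta$ exactly as stated—rather than merely "some constant"—requires being careful with every Young's-inequality split, which is why the kernel-dependent term appears with the specific factor $30/\mu^2$; this bookkeeping, not any conceptual difficulty, is the bulk of the work.
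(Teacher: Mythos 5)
Your proposal follows essentially the same route as the paper: flip time to obtain a forward Volterra--Stokes system, test with the exponentially weighted $A\mathbf{w}'$, invoke positivity of the memory form (after absorbing the weight into the kernel) plus Young's inequality for the $\|A\mathbf{w}\|_0$ bound, then read $\mathbf{w}_\varsigma$ off the equation via the Leray projection and use Assumption \ref{L2.0} together with the convolution estimate of Lemma \ref{l3.1} for $\chi$. Apart from a harmless inconsistency of exponential weights in your displayed identity (which your own parenthetical already corrects), this matches the paper's argument, including how the constant $\theta$ is assembled from the three separate bounds.
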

\begin{proof}
We substitute $\varsigma$ by $\varsigma^\ast=t-\varsigma$ in (\ref{eqb3.1}) to get
\begin{align}
&\frac{\partial\mathbf{w}(t-\varsigma^\ast)}{\partial\varsigma^\ast}\frac{\partial\varsigma^\ast}{\partial\varsigma}+\mu\Delta \mathbf{w}(t-\varsigma^\ast)+
\rho\int_{t-\varsigma^\ast}^t (s-t+\varsigma^\ast)^{-\beta}e^{-\delta(s-t+\varsigma^\ast)}\Delta \mathbf{w}(s)ds  \nonumber\\
&-\nabla
\chi(t-\varsigma^\ast) =e^{2\alpha(t-\varsigma^\ast)}\mathbf{\xi}(t-\varsigma^\ast), ~\varsigma^\ast\in(0,t],~~\mathbf{w}(t)=\mathbf{0}. \label{eqb3.1o}
\end{align}
Then we define  $\mathbf{w}'(\varsigma^\ast)=\mathbf{w}(t-\varsigma^\ast)$ and $ \chi'(\varsigma^\ast)=\chi(t-\varsigma^\ast)$ to get  from (\ref{eqb3.1o})
\begin{align}
&-\mathbf{w}'_{\varsigma^\ast}+\mu\Delta \mathbf{w}'(\varsigma^\ast)+
\rho\int_{t-\varsigma^\ast}^t (s-t+\varsigma^\ast)^{-\beta}e^{-\delta(s-t+\varsigma^\ast)}\Delta \mathbf{w}(s)ds-\nabla
\chi'(\varsigma^\ast) \nonumber\\
&=e^{2\alpha(t-\varsigma^\ast)}\mathbf{\xi}(t-\varsigma^\ast),~\varsigma^\ast\in(0,t],~~\mathbf{w}'(0)=\mathbf{0}.  \label{eqb3.1*@o}
\end{align}
We further introduce $s'=t-s\in[0,\varsigma^\ast]$ such that for $\varsigma^\ast\in (0,t]$
\begin{align*}
\rho\int_{t-\varsigma^\ast}^t & (s-t+\varsigma^\ast)^{-\beta}e^{-\delta(s-t+\varsigma^\ast)}\Delta \mathbf{w}(s)ds \\
& = -\rho\int_0^{\varsigma^\ast} ({\varsigma^\ast}-s')^{-\beta}e^{-\delta({\varsigma^\ast}-s')}\Delta \mathbf{w}(t-s')(-ds').
\end{align*}
We combine the above equation with (\ref{eqb3.1*@o}) and the definition of $\mathbf{w}'$ to obtain
\begin{align}
&\mathbf{w}'_{\varsigma^\ast}-\mu\Delta \mathbf{w}'-
\rho\int_0^{\varsigma^\ast} ({\varsigma^\ast}-s)^{-\beta}e^{-\delta({\varsigma^\ast}-s)}\Delta \mathbf{w}'(s)ds+\nabla
\chi'=-e^{2\alpha(t-\varsigma^\ast)}\mathbf{\xi} \label{eqb3.1*@}
\end{align}
for all $\varsigma^\ast\in(0,t]$ with $\mathbf{w}'(0)=\mathbf{0}$, which gives
\begin{eqnarray}
(\mathbf{w}'_{\varsigma^\ast},\phi)+\mu a(\mathbf{w}',\phi)+
J(\varsigma^\ast;\mathbf{w}',\phi)-d(\phi,\chi')=-e^{2\alpha(t-\varsigma^\ast)}(\mathbf{\xi},\phi).   \label{eqb3.1*}
\end{eqnarray}

We let $\phi=2e^{2\alpha\varsigma^\ast} A\mathbf{w}'\in V$ in (\ref{eqb3.1*}), use $\gamma_0|\mathbf{v}|_{1}^2\leq \|A\mathbf{v}\|_{0}^2$ and note $0\leq\alpha< \frac{1}{2}\min\{\delta,\frac{\mu\gamma_0}{2}\}$ and
$|-2e^{2\alpha t}(\mathbf{\xi},A\mathbf{w}')|\leq\frac{\mu}{2}e^{2\alpha \varsigma^\ast}\|A\mathbf{w}'\|_0^2+\frac{2}{\mu}e^{2\alpha(2t-\varsigma^\ast)}\|\mathbf{\xi}\|_0^2$
to obtain
\begin{align*}
&\frac{d}{d\varsigma^\ast}(e^{2\alpha \varsigma^\ast}|\mathbf{w}'|_1^2)+\mu e^{2\alpha \varsigma^\ast}\|A\mathbf{w}'\|_0^2+
2e^{2\alpha \varsigma^\ast}J(\varsigma^\ast;\mathbf{w}',A\mathbf{w}')\leq\frac{2}{\mu}e^{2\alpha(2t-\varsigma^\ast)}\|\mathbf{\xi}\|_0^2.
\end{align*}
Integrate above equation from $0$ to $t$ and utilize $\int_0^tJ(\varsigma^\ast;\mathbf{w}',A\mathbf{w}')d\varsigma^\ast\geq0$ from Lemma \ref{L2.1} and $\mathbf{w}'(0)=\mathbf{0}$ to get
\begin{align}
e^{2\alpha t}|\mathbf{w}'|_1^2+\mu\int_0^te^{2\alpha \varsigma^\ast}\|A\mathbf{w}'\|_0^2d\varsigma^\ast\leq \frac{2}{\mu}\int_0^te^{2\alpha(2t-\varsigma^\ast)}\|\mathbf{\xi}\|_0^2d\varsigma^\ast.\label{eq4.5}
\end{align}
We substitute $\varsigma^\ast$ for $\varsigma$ with $\varsigma=t-\varsigma^\ast$ to obtain
\begin{align}
\mu^2\int_0^te^{-2\alpha \varsigma}\|A\mathbf{w}\|_0^2d\varsigma\leq 2\int_0^te^{2\alpha \varsigma}\|\mathbf{\xi}\|_0^2d\varsigma.\label{eq4.2}
\end{align}

Next, We take $\phi=e^{2\alpha\varsigma^\ast} \mathbf{w}'_{\varsigma^\ast}\in V$ in (\ref{eqb3.1*}) to get
\begin{align*}
e^{2\alpha\varsigma^\ast}(\mathbf{w}'_{\varsigma^\ast},\mathbf{w}'_{\varsigma^\ast})=-e^{2\alpha t}(\mathbf{\xi},\mathbf{w}'_{\varsigma^\ast})-e^{2\alpha\varsigma^\ast}\mu a(\mathbf{w}',\mathbf{w}'_{\varsigma^\ast})-e^{2\alpha\varsigma^\ast}J(\varsigma^\ast;\mathbf{w}',\mathbf{w}'_{\varsigma^\ast}),
\end{align*}
that is,
\begin{align}
e^{\alpha\varsigma^\ast}\|\mathbf{w}'_{\varsigma^\ast}\|_0\leq e^{\alpha(2 t-\varsigma^\ast)}\|\mathbf{\xi}\|_0
+e^{\alpha\varsigma^\ast} \Big[ \mu\|A\mathbf{w}'\|_0+ \rho \int_0^{\varsigma^\ast}(\varsigma^\ast-s)^{-\beta}e^{-\delta(\varsigma^\ast-s)}\|A\mathbf{w}'\|_0ds \Big]. \nonumber
\end{align}
We square both sides and  integrate the resulting equation from $0$ to $t$ to get
\begin{align}
\int_0^te^{2\alpha \varsigma^\ast}\|\mathbf{w}'_{\varsigma^\ast}\|_0^2d\varsigma^\ast&\leq 3\int_0^te^{2\alpha(2t-\varsigma^\ast)}\|\mathbf{\xi}\|_0^2d\varsigma^\ast+3\mu^2\int_0^te^{2\alpha\varsigma^\ast}\|A\mathbf{w}'\|_0^2d\varsigma^\ast\nonumber\\
&+3\int_0^te^{2\alpha\varsigma^\ast} (\rho\int_0^{\varsigma^\ast}(\varsigma^\ast-s)^{-\beta}e^{-\delta(\varsigma^\ast-s)}\|A\mathbf{w}'\|_0ds)^2d\varsigma^\ast.  \label{eqbb3.1}
\end{align}
As the last right-hand side term in (\ref{eqbb3.1}) is indeed $3I(A\mathbf{w}')$ (cf. (\ref{eqJ.2})), we thus apply the estimates (\ref{eqJ.2}) and (\ref{eq4.5}) and substitute $\varsigma^\ast$ by $\varsigma=t-\varsigma^\ast$ to get
\begin{align}
\int_0^te^{-2\alpha \varsigma}\|\mathbf{w}_\varsigma\|_0^2d\varsigma\leq (9+\frac{6\rho^2\Gamma^2(1-\beta)}{\mu^2[\delta(\delta-2\alpha)]^{1-\beta}})\int_0^te^{2\alpha \varsigma}\|\mathbf{\xi}\|_0^2d\varsigma.\label{eq4.3}
\end{align}

By (\ref{eqb3.1*@}), Assumption \ref{L2.0} and the Minkowski inequality, we get after integration from $0$ to $t$
\begin{align}
&\int_0^te^{2\alpha \varsigma^\ast}\|\chi'\|_1^2d\varsigma^\ast\leq3\int_0^te^{2\alpha\varsigma^\ast}\|\mathbf{w}'_{\varsigma^\ast}\|_{0}^2d\varsigma^\ast+3\int_0^te^{2\alpha(2t-\varsigma^\ast)}\|\mathbf{\xi}\|_{0}^2d\varsigma^\ast\nonumber\\
&+3\int_0^te^{2\alpha\varsigma^\ast} (\rho\int_0^{\varsigma^\ast}(\varsigma^\ast-s)^{-\beta}e^{-\delta(\varsigma^\ast-s)}\|A\mathbf{w}'\|_0ds)^2d\varsigma^\ast. \label{eqbb3.2}
\end{align}
As the last right-hand side term in (\ref{eqbb3.2}) is $3I(A\mathbf{w}')$ (cf. (\ref{eqJ.2})),  we use (\ref{eqbb3.1}), (\ref{eq4.5}) and the substitution $\varsigma=t-\varsigma^\ast$ to get
\begin{align}
&\int_0^te^{-2\alpha \varsigma}\|\chi\|_1^2d\varsigma\leq (30+\frac{24\rho^2\Gamma^2(1-\beta)}{\mu^2\delta^{1-\beta}(\delta-2\alpha)^{1-\beta}})\int_0^te^{2\alpha\varsigma}\|\mathbf{\xi}\|_{0}^2d\varsigma.\label{eq4.4}
\end{align}
We then combine inequalities (\ref{eq4.2}), (\ref{eq4.3}) and (\ref{eq4.4}) to complete the proof.
\end{proof}

\subsection{Volterra-Stokes projection}
Define  $S_h\mathbf{u}\in V_h$   such that the following relation holds for any $ \mathbf{\phi}_h\in V_h$
\begin{eqnarray}
\mu a(\mathbf{u}-S_h\mathbf{u},\mathbf{\phi}_h)+J(t;\mathbf{u}-S_h\mathbf{u},\mathbf{\phi}_h)-d(\mathbf{\phi}_h,p)=0,\ S_h\mathbf{u}(0)=P_h\mathbf{u}(0).\label{eqp13.1}
\end{eqnarray}
If the $J$ term disappears, then this is the standard Stokes projection. Thus we call (\ref{eqp13.1}) the Volterra-Stokes projection.

\begin{lemma} \label{l5.4} Under the conditions of Theorem \ref{TRR} and
\begin{align}
\max\{4,3[1+(c^\ast)^2]\}\frac{\rho^2}{\mu^2}\frac{\Gamma(1-\beta)}{\delta^{1-\beta}}<\frac{(\delta-2\alpha)^{1-\beta}}{\Gamma(1-\beta)}, \label{cond}
\end{align}
then the following error estimates hold for any $t\geq 0$ and
\begin{align}
&\|\mathbf{u}-S_h\mathbf{u}\|_0^2+h^2|\mathbf{u}-S_h\mathbf{u}|_{1}^2+e^{-2\alpha t}\int_0^te^{2\alpha s}\|\mathbf{u}-S_h\mathbf{u}\|_0^2ds\leq \kappa h^4e^{-2\alpha t},\label{eql4.0}\\
&\|(\mathbf{u}-S_h\mathbf{u})_t\|_0^2+e^{-2\alpha t}\int_0^te^{2\alpha s}\|(\mathbf{u}-S_h\mathbf{u})_s\|_0^2ds\leq \kappa h^4e^{-2\alpha t},\label{eql4.1}
\end{align}
where constant $c^\ast>0$ comes from Assumption \ref{L2.0}.
\end{lemma}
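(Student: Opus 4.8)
The plan is to establish \eqref{eql4.0} and \eqref{eql4.1} by an energy argument applied to the error of the Volterra-Stokes projection, combining a coercivity/testing step to control the $H^1$-seminorm with the parabolic duality of Lemma \ref{l3.2} to upgrade to the $L^2$ estimate, with the long-time Gr\"onwall inequality (Lemma \ref{lemgenegron}) and the bounds on $J$ from Lemma \ref{l3.1} doing the work of absorbing the memory term. Throughout, the structural condition \eqref{cond} is exactly what makes the Gr\"onwall constant finite and guarantees the decay rate $e^{-2\alpha t}$ survives.

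First I would write $\eta := \mathbf{u} - S_h\mathbf{u}$ and split it through the $L^2$-projection as $\eta = (\mathbf{u} - P_h\mathbf{u}) + (P_h\mathbf{u} - S_h\mathbf{u}) =: \eta_1 + \eta_2$, where $\eta_1$ is controlled directly by the approximation property \eqref{eq4P2} and Theorem \ref{TRR} (giving $\|\eta_1\|_0 + h|\eta_1|_1 \le \kappa h^2\|A\mathbf{u}\|_0 \le \kappa h^2 e^{-\alpha t}$), so it remains to estimate the finite element component $\eta_2 \in V_h$. Testing \eqref{eqp13.1} against $\mathbf{\phi}_h = e^{2\alpha t}A_h\eta_2$ and using that $d(\mathbf{\phi}_h, p) = d(\mathbf{\phi}_h, p - \rho_h p)$ on $V_h$, I obtain an identity of the form $\mu e^{2\alpha t}|\eta_2|_1^2 + e^{2\alpha t}J(t;\eta_2,A_h\eta_2) = -\mu e^{2\alpha t}a(\eta_1, A_h\eta_2) - e^{2\alpha t}J(t;\eta_1,A_h\eta_2) + e^{2\alpha t}d(A_h\eta_2, p-\rho_h p)$; bounding the right-hand side with Cauchy--Schwarz, \eqref{eq4.1}, Theorem \ref{TRR}, and the first inequality of Lemma \ref{l3.1} (with $r=1$, absorbing $\|A_h^{1/2}\eta_2\|_0^2 = |\eta_2|_1^2$ into the left) leaves, after integrating in time against $e^{2\alpha s}$ and applying Lemma \ref{L2.1} to drop the convolution term, a bound $e^{-2\alpha t}\int_0^t e^{2\alpha s}|\eta_2|_1^2\,ds \le \kappa h^2 e^{-2\alpha t}$ plus a residual memory term of the form $\mathcal{Q}_{\beta,\delta}*(\cdots)$ that \eqref{zmh} of Lemma \ref{lemgenegron} converts into an $e^{-2\alpha t}$ bound precisely when \eqref{cond} holds. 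This gives the $h^2|\mathbf{u}-S_h\mathbf{u}|_1^2$ part of \eqref{eql4.0}.

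For the $L^2$ estimate in \eqref{eql4.0} I would run the duality argument: take $(\mathbf{w},\chi)$ to be the backward solution of \eqref{eqb3.1} with right-hand side $e^{2\alpha\varsigma}\eta(\varsigma)$ (so $\xi$ there is replaced by $\eta$), pair the backward equation against $\eta$, integrate by parts in time (the boundary terms vanish since $\mathbf{w}(t) = \mathbf{0}$ and $\eta(0)=0$ up to the projection data), and use the symmetry identity \eqref{ll3.2} to swap the memory term onto the projection equation \eqref{eqp13.1}. Choosing $\mathbf{\phi}_h = P_h\mathbf{w}$ (or $S_h\mathbf{w}$) as test function in the defining relation and exploiting that the projection error is orthogonal (in the relevant bilinear form) to $V_h$, the leading terms cancel and one is left with $\int_0^t e^{2\alpha s}\|\eta\|_0^2\,ds$ bounded by $\|\mathbf{u} - S_h\mathbf{u}\|_0$-type quantities times $\int_0^t e^{-2\alpha\varsigma}(\mu^2\|A\mathbf{w}\|_0^2 + \|\mathbf{w}_\varsigma\|_0^2 + \|\chi\|_1^2)\,d\varsigma$, which Lemma \ref{l3.2} bounds by $\theta\int_0^t e^{2\alpha\varsigma}\|\eta\|_0^2\,d\varsigma$; the $H^1$ bound just proved, together with \eqref{eq4P2} and \eqref{eq4.1}, then closes the loop to yield $\int_0^t e^{2\alpha s}\|\eta\|_0^2\,ds \le \kappa h^4 e^{2\alpha t}$ and hence the pointwise $\|\eta(t)\|_0^2 \le \kappa h^4 e^{-2\alpha t}$ after a further testing/Gr\"onwall step (testing \eqref{eqp13.1} differentiated appropriately, or using the standard trick of combining the integrated bound with the differential inequality for $\|\eta_2\|_0^2$).

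Finally, \eqref{eql4.1} follows by differentiating the defining relation \eqref{eqp13.1} in time, which produces $\mu a(\eta_t, \mathbf{\phi}_h) + J(t;\eta_t,\mathbf{\phi}_h) = d(\mathbf{\phi}_h, p_t) - \rho\mathcal{Q}_{\beta,\delta}(t)a(\eta(0),\mathbf{\phi}_h)$, where the last term vanishes since $\eta(0) = \mathbf{u}(0) - P_h\mathbf{u}(0) = \mathbf{0}$ (recall $\mathbf{u}(0)=\mathbf{0}$); then repeating the energy argument above with $(\eta_t, p_t)$ in place of $(\eta, p)$ and invoking the regularity of $p_t$ and $\mathbf{u}_t$ from \eqref{thme1}--\eqref{eq3.10} in Theorem \ref{TRR}, together with another application of Lemma \ref{l3.1} and Lemma \ref{lemgenegron} under \eqref{cond}, gives the stated bounds on $\|(\mathbf{u}-S_h\mathbf{u})_t\|_0^2$ and its time-integral. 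The main obstacle I anticipate is the bookkeeping in the duality step: because the memory kernel $\mathcal{Q}_{\beta,\delta}$ has no semigroup (variable-splitting) property, the swap \eqref{ll3.2} between the forward and backward memory terms must be handled by the Fubini-type exchange of \eqref{ll3.2} exactly as written, and every application of Lemma \ref{l3.1}'s second estimate introduces a factor $\rho^2\Gamma^2(1-\beta)/[\delta(\delta-2\alpha)]^{1-\beta}$ whose accumulated size must stay below the threshold in \eqref{cond} — tracking the constants so that the final Gr\"onwall constant in \eqref{zmh} is finite is the delicate part, and is precisely why the coefficient $\max\{4, 3[1+(c^\ast)^2]\}$ appears in the hypothesis.
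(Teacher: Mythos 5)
Your overall architecture (energy estimate for the $H^1$ part, duality for the $L^2$ part, convolution Gr\"onwall via Lemma \ref{lemgenegron} under \eqref{cond}, and differentiation of \eqref{eqp13.1} for \eqref{eql4.1}) matches the paper in outline, but the $L^2$ step rests on the wrong duality problem, and this is a genuine gap. You propose to use the \emph{parabolic} backward problem \eqref{eqb3.1} of Lemma \ref{l3.2} with data $e^{2\alpha\varsigma}\eta$ (writing $\eta=\mathbf{u}-S_h\mathbf{u}$), pairing it against $\eta$ and integrating by parts in time so that ``the leading terms cancel'' against \eqref{eqp13.1} tested with $P_h\mathbf{w}$. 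But \eqref{eqp13.1} is a stationary (Volterra--elliptic) relation with no $\eta_t$ term, so the contribution $\int_0^t(\eta,\mathbf{w}_\varsigma)\,d\varsigma=-\int_0^t(\eta_\varsigma,\mathbf{w})\,d\varsigma$ produced by the integration by parts has nothing to cancel against and can only be bounded if one already knows $\|\eta_t\|_0\le\kappa h^2e^{-\alpha t}$, i.e.\ \eqref{eql4.1}, which you derive \emph{after} \eqref{eql4.0}. Likewise, your passage from the integrated bound on $\int_0^te^{2\alpha s}\|\eta\|_0^2\,ds$ to the pointwise bound $\|\eta(t)\|_0^2\le\kappa h^4e^{-2\alpha t}$ via ``the differential inequality for $\|\eta_2\|_0^2$'' has no basis: there is no evolution equation for $\eta$, hence no such differential inequality. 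The paper instead runs the classical Aubin--Nitsche argument with the \emph{stationary} Stokes dual problem \eqref{eqS2.0} with data $\mathbf{g}=\mathbf{u}-S_h\mathbf{u}$; this is where the constant $c^\ast$ of Assumption \ref{L2.0} enters (through $\|\mathbf{u'}\|_{2}+\|p'\|_{1}\le c^\ast\|\mathbf{u}-S_h\mathbf{u}\|_{0}$), and a key trick is to rewrite $-J(t;\mathbf{u}-S_h\mathbf{u},\mathbf{u'})$ via the dual equation as $-\frac{\rho}{\mu}\big(\mathcal Q_{\beta,\delta}*(\mathbf{u}-S_h\mathbf{u}),(\mathbf{u}-S_h\mathbf{u})-\nabla p'\big)$, converting a convolution of $|\mathbf{u}-S_h\mathbf{u}|_1$ (which is only $O(h)$) into one of $\|\mathbf{u}-S_h\mathbf{u}\|_0$ so that the Gr\"onwall step closes at order $h^4$. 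That term is precisely the source of the factor $3[1+(c^\ast)^2]$ in \eqref{cond}, which your proposal invokes but never actually locates.

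A smaller but real inconsistency: in the $H^1$ step you test \eqref{eqp13.1} with $\phi_h=e^{2\alpha t}A_h\eta_2$ yet claim the resulting identity carries $\mu e^{2\alpha t}|\eta_2|_1^2$ on the left; with that test function the $a$-term gives $\mu\|A_h\eta_2\|_0^2$, and the pressure term $d(A_h\eta_2,p-\rho_hp)$ is one derivative too strong to be controlled at the required order. You presumably mean to test with $\eta_2$ itself (the paper uses $2(P_h\mathbf{u}-S_h\mathbf{u})$), after which the $H^1$ argument does go through essentially as you describe, with the factor $4$ in \eqref{cond} arising from absorbing the two $J$-terms. The treatment of \eqref{eql4.1} by differentiating \eqref{eqp13.1} and noting that the initial-data term vanishes is correct and is what the paper does.
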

\begin{proof}Let $\mathbf{\phi}_h=2(P_h\mathbf{u}-S_h\mathbf{u})\in V_h$ in (\ref{eqp13.1}) and use $P_h\mathbf{u}-S_h\mathbf{u}=(\mathbf{u}-S_h\mathbf{u})-(\mathbf{u}-P_h\mathbf{u})$ to get
\begin{eqnarray}
&2\mu |\mathbf{u}-S_h\mathbf{u}|_1^2=2\mu a(\mathbf{u}-S_h\mathbf{u},\mathbf{u}-P_h\mathbf{u})-2J(t;\mathbf{u}-S_h\mathbf{u},\mathbf{u}-S_h\mathbf{u})\nonumber\\
&+2J(t;\mathbf{u}-S_h\mathbf{u},\mathbf{u}-P_h\mathbf{u})+2d(P_h\mathbf{u}-S_h\mathbf{u},p).\label{eql4.3}
\end{eqnarray}
We apply $d(P_h\mathbf{u}-S_h\mathbf{u},\rho_hp)=0$ and Lemma \ref{l3.1} to obtain
\begin{align*}
&|2\mu a(\mathbf{u}-S_h\mathbf{u},\mathbf{u}-P_h\mathbf{u})|\leq\frac{\mu}{3} |\mathbf{u}-S_h\mathbf{u}|_1^2+3\mu |\mathbf{u}-P_h\mathbf{u}|_1^2,\\
&|2d(P_h\mathbf{u}-S_h\mathbf{u},p)|=2|d(\mathbf{u}-S_h\mathbf{u},p-\rho_hp)|+2|d(\mathbf{u}-P_h\mathbf{u},p-\rho_hp)|\nonumber\\
&\leq\frac{\mu}{3}|\mathbf{u}-S_h\mathbf{u}|_1^2+\mu |\mathbf{u}-P_h\mathbf{u}|_1^2+4\mu^{-1}\|p-\rho_hp\|_0^2,\\
&|2J(t;\mathbf{u}-S_h\mathbf{u},\mathbf{u}-P_h\mathbf{u})|\leq \mu|\mathbf{u}-P_h\mathbf{u}|_1^2+\frac{\rho^2}{\mu}(\mathcal Q_{\beta,\delta}*|\mathbf{u}-S_h\mathbf{u}|_1)^2,\\
&|2J(t;\mathbf{u}-S_h\mathbf{u},\mathbf{u}-S_h\mathbf{u})|\leq \frac{\mu}{3}|\mathbf{u}-S_h\mathbf{u}|_1^2+\frac{3\rho^2}{\mu}(\mathcal Q_{\beta,\delta}*|\mathbf{u}-S_h\mathbf{u}|_1)^2.
\end{align*}
We combine these estimates and (\ref{eq4P2}), (\ref{eq4.1}) and (\ref{eql4.3}) to get
\begin{align}
\mu|\mathbf{u}-S_h\mathbf{u}|_1^2&\leq 5c\mu h^{2}\|A\mathbf{u}\|_{0}^2+ \frac{4ch^{2}}{\mu}\|p\|_{1}^2+\frac{4\rho^2}{\mu^2}\big(\mathcal Q_{\beta,\delta}*(\mu^{\frac{1}{2}}|\mathbf{u}-S_h\mathbf{u}|_1)\big)^2\label{eql4.4}\\
&\leq \kappa h^{2}\|A\mathbf{u}\|_{0}^2+ \kappa h^{2}\|p\|_{1}^2+\frac{4\rho^2}{\mu^2}\frac{\Gamma(1-\beta)}{\delta^{1-\beta}}\mathcal Q_{\beta,\delta}*(\mu|\mathbf{u}-S_h\mathbf{u}|_1^2).\label{eql4.2}
\end{align}
We use Lemma \ref{lemgenegron} with the condition \eqref{cond} and $\kappa h^{2}(\|A\mathbf{u}\|_{0}^2+\|p\|_{1}^2)\leq \kappa h^{2}e^{-2\alpha t}$ which comes from Lemma \ref{TRR1} and Theorem \ref{TRR} in (\ref{eql4.2}) to get
\begin{align}
\mu|\mathbf{u}-S_h\mathbf{u}|_1^2\leq \kappa h^{2}e^{-2\alpha t}(1+\frac{C\Gamma(1-\beta)}{(\delta-2\alpha)^{1-\beta}-C\Gamma(1-\beta)})\leq \kappa h^{2}e^{-2\alpha t}.\label{eql4.5}
\end{align}
Further, we multiply (\ref{eql4.4}) by $e^{2\alpha s}$ and integrate it from $0$ to $t$ and use (\ref{eqJ.2}) (cf. $I(A^{\frac{1}{2}}(\mu(\mathbf{u}-S_h\mathbf{u}))$) to get
\begin{align*}
&\int_0^te^{2\alpha s}\mu|\mathbf{u}-S_h\mathbf{u}|_1^2ds\leq \kappa h^{2}\int_0^te^{2\alpha s}(\|A\mathbf{u}\|_{0}^2+\|p\|_{1}^2)ds+c^\dagger\int_0^te^{2\alpha s}\mu|\mathbf{u}-S_h\mathbf{u}|_1^2ds,
\end{align*}
where $c^\dagger=\frac{4\rho^2\Gamma^2(1-\beta)}{\mu^2[\delta(\delta-2\alpha)]^{1-\beta}}$. We apply the condition (\ref{cond}) then can merge the last term into the left-hand side of above equation, then multiply the results equations by $e^{-2\alpha t}$ and use Lemma \ref{TRR1} and Theorem \ref{TRR} to have
\begin{align}
e^{-2\alpha t}\int_0^te^{2\alpha s}|\mathbf{u}-S_h\mathbf{u}|_1^2ds\leq \kappa h^{2}e^{-2\alpha t}.\label{eq4.17}
\end{align}

Let $(\mathbf{u'},p')\in(V,M)$ be the solutions of (\ref{eqS2.0}) and set $\mathbf{g}=\mathbf{u}-S_h\mathbf{u}$ in (\ref{eqS2.0}) to get
\begin{align}
\mu a(\mathbf{u'},\mathbf{\phi})-d(\mathbf{\phi},p')=(\mathbf{u}-S_h\mathbf{u},\mathbf{\phi}),\ \ \forall \mathbf{\phi}\in V.\label{eqll4.1}
\end{align}
We let $\mathbf{\phi}=\mathbf{u}-S_h\mathbf{u}$ in (\ref{eqll4.1}) and $\mathbf{\phi}_h=P_h\mathbf{u'}$ in (\ref{eqp13.1}), and combine $d(\mathbf{u'},p)=0$, $d(\mathbf{u}-S_h\mathbf{u},\rho_hp')=0$ and $-J(t;\mathbf{u}-S_h\mathbf{u},\mathbf{u'})=-\frac{\rho}{\mu}\big(\mathcal Q_{\beta,\delta}*(\mathbf{u}-S_h\mathbf{u}),(\mathbf{u}-S_h\mathbf{u})-\nabla p'\big)$ coming from (\ref{eqll4.1}) to get
\begin{align}
&\|\mathbf{u}-S_h\mathbf{u}\|_0^2=\mu a(\mathbf{u}-S_h\mathbf{u},\mathbf{u'}-P_h\mathbf{u'})-d(\mathbf{u'}-P_h\mathbf{u'},p)-d(\mathbf{u}-S_h\mathbf{u},p'-\rho_hp')\nonumber\\
&+J(t;\mathbf{u}-S_h\mathbf{u},\mathbf{u'}-P_h\mathbf{u'})-\frac{\rho}{\mu}\big(\mathcal Q_{\beta,\delta}*(\mathbf{u}-S_h\mathbf{u}),(\mathbf{u}-S_h\mathbf{u})-\nabla p'\big).  \label{eqll4.2}
\end{align}
We use $I(i)$ ($i=1,\cdots,5$) to denote right-hand side terms of (\ref{eqll4.2}). We apply the result $\|\mathbf{u'}\|_{2}+\|p'\|_{1}\leq c^\ast\|\mathbf{u}-S_h\mathbf{u}\|_{0}$ from Assumption \ref{L2.0} to get
\begin{align*}
&|I(1)|\!\leq \! ch\mu|\mathbf{u}-S_h\mathbf{u}|_1\|A\mathbf{u'}\|_0\!\leq\!\frac{\|\mathbf{u}-S_h\mathbf{u}\|_0^2}{12}+c(c^\ast)^2 h^2\mu^2|\mathbf{u}-S_h\mathbf{u}|_1^2,\\
&|I(2)|\leq ch^{2}\|p\|_{1}\|A\mathbf{u'}\|_0\leq\frac{1}{12}\|\mathbf{u}-S_h\mathbf{u}\|_0^2+c(c^\ast)^2 h^{4}\|p\|_{1}^2,\\
&|I(3)|\leq ch|\mathbf{u}-S_h\mathbf{u}|_1\|p'\|_1\leq\frac{1}{12}\|\mathbf{u}-S_h\mathbf{u}\|_0^2+c(c^\ast)^2h^2|\mathbf{u}-S_h\mathbf{u}|_1^2,\\
&|I(4)|\leq c\rho h\|A\mathbf{u'}\|_0 (\mathcal Q_{\beta,\delta}*|\mathbf{u}-S_h\mathbf{u}|_1)\leq cc^\ast\rho h\|\mathbf{u}-S_h\mathbf{u}\|_0(\mathcal Q_{\beta,\delta}*|\mathbf{u}-S_h\mathbf{u}|_1)\\
&\leq \frac{1}{12}\|\mathbf{u}-S_h\mathbf{u}\|_0^2+(cc^\ast)^2\rho^2 h^2(Q_{\beta,\delta}*|\mathbf{u}-S_h\mathbf{u}|_1)^2,\\
&|I(5)|\leq\frac{\rho}{\mu}(\|\mathbf{u}-S_h\mathbf{u}\|_0+\|p'\|_1)\|\mathcal Q_{\beta,\delta}*(\mathbf{u}-S_h\mathbf{u})\|_0\\
&\leq\frac{\rho}{\mu}\|\mathbf{u}-S_h\mathbf{u}\|_0\|\mathcal Q_{\beta,\delta}*(\mathbf{u}-S_h\mathbf{u})\|_0+\frac{\rho c^\ast}{\mu}\|\mathbf{u}-S_h\mathbf{u}\|_0\|\mathcal Q_{\beta,\delta}*(\mathbf{u}-S_h\mathbf{u})\|_0\\
&\leq\frac{1}{6}\|\mathbf{u}-S_h\mathbf{u}\|_0^2+\frac{3\rho^2[1+(c^\ast)^2]}{\mu^2}(\mathcal Q_{\beta,\delta}*\|\mathbf{u}-S_h\mathbf{u}\|_0)^2.
\end{align*}
We then combine these inequalities in (\ref{eqll4.2}) to obtain
\begin{align}
&\|\mathbf{u}-S_h\mathbf{u}\|_0^2\leq (1+\mu^2)c(c^\ast)^2h^2|\mathbf{u}-S_h\mathbf{u}|_1^2+(cc^\ast)^2\rho^2 h^2(Q_{\beta,\delta}*|\mathbf{u}-S_h\mathbf{u}|_1)^2\nonumber\\
&+c(c^\ast)^2h^{4}\|p\|_{1}^2+\frac{3\rho^2[1+(c^\ast)^2]}{\mu^2}(\mathcal Q_{\beta,\delta}*\|\mathbf{u}-S_h\mathbf{u}\|_0)^2\label{eq4.13}\\
&\leq \kappa h^2|\mathbf{u}-S_h\mathbf{u}|_1^2+\kappa h^2\frac{\Gamma(1-\beta)}{\delta^{1-\beta}}Q_{\beta,\delta}*|\mathbf{u}-S_h\mathbf{u}|_1^2+\kappa h^{4}\|p\|_{1}^2\nonumber\\
&+\frac{3\rho^2[1+(c^\ast)^2]}{\mu^2}\frac{\Gamma(1-\beta)}{\delta^{1-\beta}}\mathcal Q_{\beta,\delta}*\|\mathbf{u}-S_h\mathbf{u}\|_0^2.\label{eq4.7}
\end{align}
We note
$
Q_{\beta,\delta}*\|\mathbf{u}-S_h\mathbf{u}\|_1^2\leq \kappa h^2e^{-2\alpha t} Q_{\beta,\delta-2\alpha}*1\leq \frac{\kappa h^2\Gamma(1-\beta)}{(\delta-2\alpha)^{1-\beta}}e^{-2\alpha t}\leq \kappa h^2e^{-2\alpha t}
$
by (\ref{eql4.5}) and use Theorem \ref{TRR}, (\ref{eql4.5}) in (\ref{eq4.7}) to get
\begin{align*}
\|\mathbf{u}-S_h\mathbf{u}\|_0^2\leq \kappa h^4e^{-2\alpha t}+\frac{3\rho^2[1+(c^\ast)^2]}{\mu^2}\frac{\Gamma(1-\beta)}{\delta^{1-\beta}}\mathcal Q_{\beta,\delta}*\|\mathbf{u}-S_h\mathbf{u}\|_0^2.
\end{align*}
Then we apply Lemma \ref{lemgenegron} with condition \eqref{cond} to get
\begin{align}
\|\mathbf{u}-S_h\mathbf{u}\|_0^2\leq \kappa h^{4}e^{-2\alpha t}(1+\frac{C\Gamma(1-\beta)}{(\delta-2\alpha)^{1-\beta}-C\Gamma(1-\beta)})\leq\kappa h^4e^{-2\alpha t}.\label{eq4.14}
\end{align}
Furthermore, we multiply (\ref{eq4.13}) by $e^{2\alpha t}$ and integrate it from $0$ to $t$ and use (\ref{eqJ.2}) (cf. $I(A^{\frac{1}{2}}(\mathbf{u}-S_h\mathbf{u}))$ and $I(\mathbf{u}-S_h\mathbf{u})$ in the result equation to get
\begin{align}
&\int_0^te^{2\alpha s}\|\mathbf{u}-S_h\mathbf{u}\|_0^2ds\leq\kappa h^{4}\int_0^te^{2\alpha s}\|p\|_{1}^2ds+\kappa h^2\int_0^te^{2\alpha s}|\mathbf{u}-S_h\mathbf{u}|_1^2ds\nonumber\\
&+\frac{(cc^\ast)^2\rho^2\Gamma^2(1-\beta)}{[\delta(\delta-2\alpha)]^{1-\beta}}h^2\int_0^te^{2\alpha s}|\mathbf{u}-S_h\mathbf{u}|_1^2ds+c^\dagger\int_0^te^{2\alpha s}\|\mathbf{u}-S_h\mathbf{u}\|_0^2ds,\label{eq4.11}
\end{align}
where $c^\dagger=\frac{3\rho^2[1+(c^\ast)^2]}{\mu^2}\frac{\Gamma^2(1-\beta)}{[\delta(\delta-2\alpha)]^{1-\beta}}$. We apply the condition (\ref{cond}) then can merge the last right-hand side term of (\ref{eq4.11}) into its left-hand side, multiply the resulting equation by $e^{-2\alpha t}$ and use (\ref{eq4.17}), Lemma \ref{TRR1} and Theorem \ref{TRR} to obtain
\begin{align}
e^{-2\alpha t}\int_0^te^{2\alpha s}\|\mathbf{u}-S_h\mathbf{u}\|_0^2ds\leq \kappa h^{4}e^{-2\alpha t}.\label{eq4.12}
\end{align}
We combine (\ref{eql4.5}), (\ref{eq4.14}) and (\ref{eq4.12}) to have the results (\ref{eql4.0}).

Differentiate (\ref{eqp13.1}) with respect to $t$ and note $((\mathbf{u}-S_h\mathbf{u})(0),\mathbf{\phi}_h)=0$ to get
$
\mu a\big((\mathbf{u}-S_h\mathbf{u})_t,\mathbf{\phi}_h\big)+J(t;(\mathbf{u}-S_h\mathbf{u})_t,\mathbf{\phi}_h)-d(\mathbf{\phi}_h,p_t)=0$ for $ \mathbf{\phi}_h\in V_h$.
Then (\ref{eql4.1}) could be proved following exactly the same procedure as the proof of (\ref{eql4.0}).
\end{proof}

\section{Error estimates of velocity}\label{sec5}
We intend to prove error estimates for the velocity under the $H^1$-norm and $L^2$-norm. Let $\mathbf{u}-\mathbf{u}_h=(\mathbf{u}-\mathbf{v}_h)+(\mathbf{v}_h-\mathbf{u}_h)=\mathbf{\xi}+(\mathbf{v}_h-\mathbf{u}_h)$. First, we prove the following useful lemma.

\begin{lemma} \label{l3.5} Under the conditions of Lemma \ref{l5.4}, we have
\begin{align*}
e^{-2\alpha t}\int_0^te^{2\alpha \varsigma}\|\mathbf{\xi}\|_0^2d\varsigma+h^2e^{-2\alpha t}\int_0^te^{2\alpha \varsigma}|\mathbf{\xi}|_1^2d\varsigma\leq\kappa h^4e^{-2\alpha t},\ \forall \ t\geq 0.
\end{align*}
\end{lemma}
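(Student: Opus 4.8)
The plan is to compare $\mathbf{v}_h$ with the Volterra--Stokes projection $S_h\mathbf{u}$ so that the pressure term in \eqref{eqb3.6} is eliminated and only a purely discrete, conservative error equation remains. Write $\mathbf{\xi}=\mathbf{u}-\mathbf{v}_h=(\mathbf{u}-S_h\mathbf{u})+\mathbf{\eta}_h$ with $\mathbf{\eta}_h:=S_h\mathbf{u}-\mathbf{v}_h\in V_h$, so that $\mathbf{\eta}_h(0)=P_h\mathbf{u}(0)-P_h\mathbf{u}(0)=\mathbf{0}$. Rewriting \eqref{eqp13.1} as $\mu a(\mathbf{u}-S_h\mathbf{u},\mathbf{\phi}_h)+J(t;\mathbf{u}-S_h\mathbf{u},\mathbf{\phi}_h)=d(\mathbf{\phi}_h,p)$ and subtracting it from \eqref{eqb3.6}, the pressure terms cancel, and using $\mathbf{\xi}_t=(\mathbf{u}-S_h\mathbf{u})_t+\mathbf{\eta}_{h,t}$ one is left with
\[
(\mathbf{\eta}_{h,t},\mathbf{\phi}_h)+\mu a(\mathbf{\eta}_h,\mathbf{\phi}_h)+J(t;\mathbf{\eta}_h,\mathbf{\phi}_h)=-\big((\mathbf{u}-S_h\mathbf{u})_t,\mathbf{\phi}_h\big),\qquad\forall\,\mathbf{\phi}_h\in V_h,
\]
a linear parabolic equation for $\mathbf{\eta}_h$ whose forcing is controlled at order $h^2$ in the weighted $L^2$-in-time norm by \eqref{eql4.1}.

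The second step is the energy estimate for this equation. Taking $\mathbf{\phi}_h=2e^{2\alpha t}\mathbf{\eta}_h$ and using $2e^{2\alpha t}(\mathbf{\eta}_{h,t},\mathbf{\eta}_h)=\tfrac{d}{dt}\big(e^{2\alpha t}\|\mathbf{\eta}_h\|_0^2\big)-2\alpha e^{2\alpha t}\|\mathbf{\eta}_h\|_0^2$, I would bound $2\alpha e^{2\alpha t}\|\mathbf{\eta}_h\|_0^2\le\tfrac{\mu}{2}e^{2\alpha t}|\mathbf{\eta}_h|_1^2$ via $\gamma_0\|\mathbf{\eta}_h\|_0^2\le|\mathbf{\eta}_h|_1^2$ from \eqref{eq2.4} together with the standing restriction $2\alpha<\tfrac{\mu\gamma_0}{2}$, and bound the right-hand side by $\tfrac\mu2 e^{2\alpha t}|\mathbf{\eta}_h|_1^2+\kappa e^{2\alpha t}\|(\mathbf{u}-S_h\mathbf{u})_t\|_0^2$. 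Integrating over $(0,t)$, discarding the nonnegative history term, and invoking \eqref{eql4.1} then gives
\[
e^{2\alpha t}\|\mathbf{\eta}_h\|_0^2+\mu\int_0^te^{2\alpha s}|\mathbf{\eta}_h|_1^2\,ds\le\kappa\int_0^te^{2\alpha s}\|(\mathbf{u}-S_h\mathbf{u})_s\|_0^2\,ds\le\kappa h^4,
\]
hence also $\int_0^te^{2\alpha s}\|\mathbf{\eta}_h\|_0^2\,ds\le\gamma_0^{-1}\kappa h^4$. The one non-routine point is the inequality $\int_0^te^{2\alpha s}J(s;\mathbf{\eta}_h,\mathbf{\eta}_h)\,ds\ge0$; I would obtain it from the exponential-shift identity $e^{\alpha s}(\mathcal Q_{\beta,\delta}*g)(s)=(\mathcal Q_{\beta,\delta-\alpha}*(e^{\alpha\cdot}g))(s)$ applied componentwise to $g=\nabla\mathbf{\eta}_h$, which recasts that integral as $\rho\sum_{i,j}\int_\Omega\int_0^t\big(\mathcal Q_{\beta,\delta-\alpha}*(e^{\alpha\cdot}\partial_j\eta_{h,i})\big)(s)\,e^{\alpha s}\partial_j\eta_{h,i}(s)\,ds\,dx$, which is nonnegative by Lemma \ref{L2.1} since $0\le\beta<1$, $\rho\ge0$ and $\delta-\alpha>0$.

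Finally I would combine the pieces by the triangle inequality. For the $L^2$-in-time term, $\int_0^te^{2\alpha s}\|\mathbf{\xi}\|_0^2\,ds\le 2\int_0^te^{2\alpha s}\|\mathbf{u}-S_h\mathbf{u}\|_0^2\,ds+2\int_0^te^{2\alpha s}\|\mathbf{\eta}_h\|_0^2\,ds\le\kappa h^4$ by \eqref{eql4.0} and the energy estimate above; for the $H^1$-in-time term, $h^2\int_0^te^{2\alpha s}|\mathbf{\xi}|_1^2\,ds\le 2h^2\int_0^te^{2\alpha s}|\mathbf{u}-S_h\mathbf{u}|_1^2\,ds+2h^2\int_0^te^{2\alpha s}|\mathbf{\eta}_h|_1^2\,ds\le\kappa h^4$, where the first summand uses the bound $e^{-2\alpha t}\int_0^te^{2\alpha s}|\mathbf{u}-S_h\mathbf{u}|_1^2\,ds\le\kappa h^2e^{-2\alpha t}$ established inside the proof of Lemma \ref{l5.4} (cf. \eqref{eq4.17}). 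Multiplying through by $e^{-2\alpha t}$ yields the assertion. If one prefers not to reuse a bound from that proof, the weighted $H^1$-in-time control of $\mathbf{\xi}$ can instead be produced directly by testing \eqref{eqb3.6} with $2e^{2\alpha s}P_h\mathbf{\xi}$, splitting $P_h\mathbf{\xi}=\mathbf{\xi}-(\mathbf{u}-P_h\mathbf{u})$, treating the memory cross-term $\int_0^te^{2\alpha s}J(s;\mathbf{\xi},\mathbf{u}-P_h\mathbf{u})\,ds$ with Lemma \ref{l3.1}, and using \eqref{eq4P2}, Lemma \ref{TRR1} and Theorem \ref{TRR}; the sharp $L^2$-in-time bound would still come from $\mathbf{\eta}_h$ (or, alternatively, from the parabolic duality of Lemma \ref{l3.2}). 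I expect the bookkeeping of the tempered history term under the exponential weight to be the only real obstacle; once the reduced equation for $\mathbf{\eta}_h$ is isolated the rest is standard.
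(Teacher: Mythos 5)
Your argument is correct, but it is not the route the paper takes. The paper proves this lemma in two independent pieces: the weighted $H^1$-in-time bound comes from testing \eqref{eqb3.6} with $2e^{2\alpha t}P_h\mathbf{\xi}$ (yielding \eqref{eq3.3}), and the sharp $O(h^4)$ weighted $L^2$-in-time bound comes from the parabolic duality argument, i.e.\ the backward problem \eqref{eqb3.1} together with Lemma \ref{l3.2} and the identity \eqref{ll3.2}; the Volterra--Stokes projection is not used at all in the paper's proof of this lemma. You instead decompose $\mathbf{\xi}=(\mathbf{u}-S_h\mathbf{u})+\mathbf{\eta}_h$ and run a single energy estimate on the error equation for $\mathbf{\eta}_h$ (which is exactly the paper's equation \eqref{kj}, later used for Lemma \ref{l6.2}), with the key refinement that you absorb the test-function side of $\big((\mathbf{u}-S_h\mathbf{u})_t,\mathbf{\eta}_h\big)$ into the dissipation via Poincar\'e rather than leaving $\|\mathbf{\eta}_h\|_0^2$ on the right -- this is what breaks the circularity that forces the paper to prove Lemma \ref{l3.5} before Lemma \ref{l6.2}. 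Your treatment of the weighted memory term via the shift $e^{\alpha s}(\mathcal Q_{\beta,\delta}*g)(s)=(\mathcal Q_{\beta,\delta-\alpha}*(e^{\alpha\cdot}g))(s)$ and Lemma \ref{L2.1} is exactly the mechanism the paper uses implicitly whenever it invokes ``positivity of $J$'' under an exponential weight, so that step is sound. The trade-off: your route is shorter, renders the backward problem and Lemma \ref{l3.2} unnecessary for this lemma, and gets the $O(h^4)$ rate from the $L^2$ superconvergence of the projection in \eqref{eql4.0}--\eqref{eql4.1}; the price is that you lean on the full strength of Lemma \ref{l5.4}, including the intermediate estimate \eqref{eq4.17} which is established inside that proof but not recorded in its statement (you flag this and offer the paper's own \eqref{eq3.3}-type derivation as a substitute, which is adequate). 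The paper's duality argument, by contrast, needs only the first-order bounds on $\mathbf{\xi}$ and the regularity of the dual solution, so it would survive even if the projection analysis were weakened.
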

\begin{proof}
We set $\mathbf{\phi}_h=2e^{2\alpha t}P_h\mathbf{\xi}$ and note $P_h\mathbf{\xi}=\mathbf{\xi}-(\mathbf{u}-P_h\mathbf{u})$ in (\ref{eqb3.6}) to get
\begin{align}
&e^{2\alpha t}\frac{d}{dt}\|\mathbf{\xi}\|_0^2+2\mu e^{2\alpha t}|\mathbf{\xi}|_1^2+2e^{2\alpha t}J(t;\mathbf{\xi},\mathbf{\xi})=2e^{2\alpha t}(\mathbf{\xi}_t,\mathbf{u}-P_h\mathbf{u})\nonumber\\
&+2\mu e^{2\alpha t} a(\mathbf{\xi},\mathbf{u}-P_h\mathbf{u})+2e^{2\alpha t}(p,\nabla\cdot P_h\mathbf{\xi})+2e^{2\alpha t}J(t;\mathbf{\xi},\mathbf{u}-P_h\mathbf{u}).\label{eq5.3}
\end{align}
As $P_h$ commutes with differentiation in time, we use $\mathbf{\xi}_t=(\mathbf{u}-P_h\mathbf{u})_t-(\mathbf{v}_h-P_h\mathbf{u})_t$ and  $(\phi_h,\mathbf{u}-P_h\mathbf{u})=0$ for any $\phi_h\in V_h$ to get
\begin{align*}
2e^{2\alpha t}(\mathbf{\xi}_t,\mathbf{u}-P_h\mathbf{u})\!=\!\frac{d}{dt}(e^{2\alpha t}\|\mathbf{u}-P_h\mathbf{u}\|_0^2)-2\alpha e^{2\alpha t}\|\mathbf{u}-P_h\mathbf{u}\|_0^2 \!\leq\!\frac{d}{dt}(e^{2\alpha t}\|\mathbf{u}-P_h\mathbf{u}\|_0^2).
\end{align*}
We note $p=(p-\rho_hp)+\rho_hp$ and $(q_h,\nabla\cdot P_h\mathbf{\xi})=0$ for any $ q_h\in M_h$, and then use (\ref{eq4P1}) and (\ref{eq4.1}) to get
\begin{align*}
&2e^{2\alpha t}|(p,\nabla\cdot P_h\mathbf{\xi})|\leq 2e^{2\alpha t}\|p-\rho_hp\|_0|P_h\mathbf{\xi}|_1\leq \frac{\mu}{6}e^{2\alpha t}|\mathbf{\xi}|_1^2+6c^2\mu^{-1}h^2e^{2\alpha t}\|p\|_1^2.
\end{align*}
We combine above inequations and
\begin{align*}
2\mu e^{2\alpha t}|a(\mathbf{\xi},\mathbf{u}-P_h\mathbf{u})|\leq \frac{\mu}{6}e^{2\alpha t}|\mathbf{\xi}|_1^2+6\mu e^{2\alpha t}|\mathbf{u}-P_h\mathbf{u}|_1^2\leq \frac{\mu}{6}e^{2\alpha t}|\mathbf{\xi}|_1^2+6\mu h^2e^{2\alpha t}\|A\mathbf{u}\|_0^2
\end{align*}
with (\ref{eq5.3}), further use $\frac{\mu}{2} e^{2\alpha t}|\mathbf{\xi}|_1^2\geq \frac{\mu\gamma_0}{2}e^{2\alpha t}\|\mathbf{\xi}\|_0^2\geq2\alpha e^{2\alpha t}\|\mathbf{\xi}\|_0^2$ to get
\begin{align}
&\frac{d}{dt}(e^{2\alpha t}\|\mathbf{\xi}\|_0^2)+\frac{7\mu}{6}e^{2\alpha t}|\mathbf{\xi}|_1^2+2e^{2\alpha t}J(t;\mathbf{\xi},\mathbf{\xi})\leq \frac{d}{dt}(e^{2\alpha t}\|\mathbf{u}-P_h\mathbf{u}\|_0^2)\nonumber\\
&+ 6\mu h^2 e^{2\alpha t}\|A\mathbf{u}\|_0^2+6c^2\mu^{-1}h^2e^{2\alpha t}\|p\|_1^2+2e^{2\alpha t}J(t;\mathbf{\xi},\mathbf{u}-P_h\mathbf{u}).\label{eq3.2}
\end{align}
Then integrate (\ref{eq3.2}) with respect to time from $0$ to $t$. And we note
\begin{align*}
&2\bigg|\int_0^te^{2\alpha s}J(s;\mathbf{\xi},\mathbf{u}-P_h\mathbf{u})ds\bigg|\leq\frac{\mu}{6}\int_0^{t}e^{2\alpha s}| \mathbf{\xi}|_1^2ds +\frac{6\rho^2h^2\Gamma^2(1-\beta)}{\mu[\delta(\delta-2\alpha)]^{1-\beta}}\int_0^{t}e^{2\alpha s}\|A\mathbf{u}\|_0^2ds
\end{align*}
from Lemma \ref{l3.1} and
\begin{align*}
\|\mathbf{u}-P_h\mathbf{u}\|_0^2=(\mathbf{u}-P_h\mathbf{u},\mathbf{v}_h-P_h\mathbf{u})+(\mathbf{u}-P_h\mathbf{u},\mathbf{\xi})=(\mathbf{u}-P_h\mathbf{u},\mathbf{\xi})
\end{align*}
i.e. $e^{2\alpha t}\|\mathbf{u}-P_h\mathbf{u}\|_0\leq e^{2\alpha t}\|\mathbf{\xi}\|_0$. Further we use $\mathbf{\xi}(0)=\mathbf{u}(0)-P_h\mathbf{u}(0)=\mathbf{0}$ and Lemma \ref{L2.1} to obtain
\begin{eqnarray}
\int_0^te^{2\alpha s}|\mathbf{\xi}|_1^2ds\leq \kappa h^2\int_0^te^{2\alpha s}\|A\mathbf{u}\|_0^2ds+\kappa h^2\int_0^te^{2\alpha s}\|p\|_1^2ds.\label{eq3.3}
\end{eqnarray}
 We then apply (\ref{eqb3.1}) to get
$e^{2\alpha \varsigma}\|\mathbf{\xi}\|_0^2=(\mathbf{\xi},\mathbf{w}_\varsigma)-\mu a(\mathbf{\xi},\mathbf{w})-J_b(\varsigma;\mathbf{w},\mathbf{\xi})+d(\mathbf{\xi},\chi),$
 add this equation and (\ref{eqb3.6}) with  $\mathbf{\phi}_h=P_h\mathbf{w}$,  split $\mathbf{\xi}=\mathbf{u}-P_h\mathbf{u}+(P_h\mathbf{u}-\mathbf{v}_h)$, and note $
(\mathbf{\xi}_\varsigma,\mathbf{w}-P_h\mathbf{w})=\frac{d}{d\varsigma}(\mathbf{\xi},\mathbf{w}-P_h\mathbf{w})-(\mathbf{u}-P_h\mathbf{u},\mathbf{w}_\varsigma)\nonumber$
which is derived from $(\mathbf{u}-P_h\mathbf{u},P_h\mathbf{w}_\varsigma)=0$ and $(P_h\mathbf{u}-\mathbf{v}_h,\mathbf{w}_\varsigma-P_h\mathbf{w}_\varsigma)=0$, to get
\begin{align}
&e^{2\alpha \varsigma}\|\mathbf{\xi}\|_0^2=\frac{d}{d\varsigma}(\mathbf{\xi},P_h\mathbf{w})+(\mathbf{u}-P_h\mathbf{u},\mathbf{w}_\varsigma)-\mu a(\mathbf{\xi},\mathbf{w}-P_h\mathbf{w})\nonumber\\
&+J(\varsigma;\mathbf{\xi},P_h\mathbf{w})-J_b(\varsigma;P_h\mathbf{w},\mathbf{\xi})-J_b(\varsigma;\mathbf{w}-P_h\mathbf{w},\mathbf{\xi})+d(\mathbf{\xi},\chi)-d(P_h\mathbf{w},p).\label{eqb3.7}
\end{align}
We utilize $\theta$ coming from Lemma \ref{l3.2} (remark. $41\leq\theta<41+\min\{\frac{15}{2},\frac{10}{1+(c^\ast)^2}\}$ from condition (\ref{cond}), where constant $c^\ast>0$ comes from Assumption \ref{L2.0}), and use $\nabla\cdot\mathbf{\xi}=0$ and $\nabla\cdot\mathbf{w}=0$, (i.e. $d(\mathbf{\xi},\rho_hq)=0$ and $d(\mathbf{w},p)=0$) to obtain
\begin{align}
&|(\mathbf{u}-P_h\mathbf{u},\mathbf{w}_\varsigma)|\leq ch^2\|\mathbf{w}_\varsigma\|_0\|A\mathbf{u}\|_0\leq \frac{1}{2\theta}e^{-2\alpha \varsigma}\|\mathbf{w}_\varsigma\|_0^2+\frac{c^2\theta h^4e^{2\alpha \varsigma}}{2}\|A\mathbf{u}\|_0^2\nonumber,\\
&\mu|a(\mathbf{\xi},\mathbf{w}-P_h\mathbf{w})|\leq c\mu h|\mathbf{\xi}|_1\|A\mathbf{w}\|_0\leq \frac{\mu^2e^{-2\alpha \varsigma}}{6\theta}\|A\mathbf{w}\|_0^2+\frac{3c^2\theta h^2e^{2\alpha \varsigma}}{2}|\mathbf{\xi}|_1^2\nonumber,\\
&|d(\mathbf{\xi},\chi)|=|d(\mathbf{\xi},\chi-\rho_h\chi)|\leq ch\|\chi\|_1|\mathbf{\xi}|_1\leq \frac{1}{2\theta}e^{-2\alpha \varsigma}\|\chi\|_1^2+\frac{c^2\theta h^2e^{2\alpha \varsigma}}{2}|\mathbf{\xi}|_1^2\nonumber,\\
&|-d(P_h\mathbf{w},p)|=|d(\mathbf{w}-P_h\mathbf{w},p)|\leq \frac{\mu^2e^{-2\alpha \varsigma}}{6\theta}\|A\mathbf{w}\|_0^2+\frac{3c^2\theta h^4\mu^{-2}e^{2\alpha \varsigma}}{2}|p|_1^2.\nonumber
\end{align}
We combine these inequalities with (\ref{eqb3.7}), integrate the resulting equation in time, and utilize (\ref{ll3.2}) and Lemma \ref{l3.1} to get $\int_0^t(J(\varsigma;\mathbf{\xi},P_h\mathbf{w})-J_b(\varsigma;P_h\mathbf{w},\mathbf{\xi}))d\varsigma=0$ and
\begin{align*}
&\bigg|-\int_0^tJ_b(\varsigma;\mathbf{w}-P_h\mathbf{w},\mathbf{\xi})d\varsigma\bigg|=\bigg|\int_0^tJ(\varsigma;\mathbf{\xi},\mathbf{w}-P_h\mathbf{w})d\varsigma\bigg|\\
&\qquad\leq\frac{\mu^2}{6\theta}\int_0^{t}e^{-2\alpha \varsigma}\| A\mathbf{w}\|_0^2d\varsigma+\frac{3c^2\rho^2\theta h^2}{2\mu^2}\frac{\Gamma^2(1-\beta)}{[\delta(\delta-2\alpha)]^{1-\beta}}\int_0^{t}e^{2\alpha \varsigma}|\mathbf{\xi}|_1^2d\varsigma.
\end{align*}
By definitions of $\mathbf{v}_h(0)$ and $\mathbf{w}(t)$, $(\mathbf{u}-P_h\mathbf{u},P_h\mathbf{w})=0$, and $\mathbf{\xi}=(\mathbf{u}-P_h\mathbf{u})+(P_h\mathbf{u}-\mathbf{v}_h)$,
\begin{align*}
\int_0^t\frac{d}{d\varsigma}(\mathbf{\xi},P_h\mathbf{w})d\varsigma=(P_h\mathbf{u}-\mathbf{v}_h,P_h\mathbf{w})=(P_h\mathbf{u}-\mathbf{v}_h,\mathbf{w})=0.
\end{align*}
Thus we get
\begin{align*}
\int_0^te^{2\alpha \varsigma} & \|\mathbf{\xi}\|_0^2d\varsigma \leq\frac{1}{2\theta}\int_0^te^{-2\alpha \varsigma}\big(\mu^2 \|A\mathbf{w}\|_0^2+\|\mathbf{w}_\varsigma\|_0^2+\|\chi\|_1^2\big)d\varsigma \nonumber\\
&+\frac{c^2\theta h^4}{2}\int_0^te^{2\alpha \varsigma}\|A\mathbf{u}\|_0^2d\varsigma +2c^2\theta h^2\int_0^te^{2\alpha \varsigma}|\mathbf{\xi}|_1^2d\varsigma+\frac{3c^2\theta \mu^{-2}h^4}{2}\int_0^te^{2\alpha \varsigma}\|p\|_1^2d\varsigma \\
& +\frac{3c^2\rho^2\theta h^2}{2\mu^2}\frac{\Gamma^2(1-\beta)}{[\delta(\delta-2\alpha)]^{1-\beta}}\int_0^{t}e^{2\alpha \varsigma}|\mathbf{\xi}|_1^2d\varsigma.
\end{align*}
Then, Lemma \ref{l3.2} and (\ref{eq3.3}) give
\begin{align*}
\int_0^te^{2\alpha \varsigma}\|\mathbf{\xi}\|_0^2d\varsigma\leq\kappa h^4\int_0^te^{2\alpha s}\|A\mathbf{u}\|_0^2ds+\kappa h^4\int_0^te^{2\alpha s}\|p\|_1^2ds.
\end{align*}
We combine this with Lemma \ref{TRR1} and Theorem \ref{TRR} to get the desired results.
\end{proof}

Based on this lemma, one could prove the error estimate of $\xi$ under the $H^1$-norm and $L^2$-norm.

\begin{lemma} \label{l6.2}Under the conditions of  Lemma \ref{l5.4}, we have
\begin{align*}
\|\mathbf{\xi}\|_0^2+h^2|\mathbf{\xi}|_1^2\leq \kappa h^4e^{-2\alpha t},\ \ \ \forall \ t\geq 0.
\end{align*}
\end{lemma}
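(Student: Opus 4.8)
The plan is to bound $\mathbf{\xi}=\mathbf{u}-\mathbf{v}_h$ in the $H^1$-norm by testing (\ref{eqb3.6}) with a suitable multiple of $P_h\mathbf{\xi}$, converting the time-derivative term into a total derivative and a nonnegative contribution (via the $L^2$-projection identity $(\mathbf{\xi}_t,\mathbf{u}-P_h\mathbf{u})$ handled as in Lemma \ref{l3.5}), and controlling the memory term $J(t;\mathbf{\xi},\mathbf{\xi})$ by its positivity from Lemma \ref{L2.1} together with the splitting estimates of Lemma \ref{l3.1}. First I would set $\mathbf{\phi}_h=2e^{2\alpha t}P_h\mathbf{\xi}$ in (\ref{eqb3.6}), use $P_h\mathbf{\xi}=\mathbf{\xi}-(\mathbf{u}-P_h\mathbf{u})$, and as in the derivation of (\ref{eq3.2}) arrive at an identity of the form
\begin{align*}
\frac{d}{dt}\big(e^{2\alpha t}\|\mathbf{\xi}\|_0^2\big)+c\mu e^{2\alpha t}|\mathbf{\xi}|_1^2+2e^{2\alpha t}J(t;\mathbf{\xi},\mathbf{\xi})\leq \frac{d}{dt}\big(e^{2\alpha t}\|\mathbf{u}-P_h\mathbf{u}\|_0^2\big)+\kappa h^2 e^{2\alpha t}\big(\|A\mathbf{u}\|_0^2+\|p\|_1^2\big)+2e^{2\alpha t}J(t;\mathbf{\xi},\mathbf{u}-P_h\mathbf{u}).
\end{align*}
The new difficulty relative to Lemma \ref{l3.5} is that here I want a pointwise-in-$t$ bound on $\|\mathbf{\xi}\|_0^2$ and $|\mathbf{\xi}|_1^2$, not just an integrated one, so I cannot simply throw away the $J(t;\mathbf{\xi},\mathbf{\xi})$ term after integration.

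The key step is to keep the memory term and apply the convolution-type Grönwall inequality of Lemma \ref{lemgenegron}. After integrating from $0$ to $t$, using $\mathbf{\xi}(0)=\mathbf{0}$, the positivity $\int_0^t e^{2\alpha s}J(s;\mathbf{\xi},\mathbf{\xi})ds\geq 0$ (rewriting $e^{2\alpha s}J$ via the tempered kernel with parameter $\delta-2\alpha$), and bounding the $J(s;\mathbf{\xi},\mathbf{u}-P_h\mathbf{u})$ term by Lemma \ref{l3.1} plus (\ref{eq4P2}), I would obtain
\begin{align*}
e^{2\alpha t}\|\mathbf{\xi}\|_0^2+\mu\int_0^t e^{2\alpha s}|\mathbf{\xi}|_1^2 ds\leq \kappa h^2 e^{2\alpha t}\big(\sup_{s\le t}\|A\mathbf{u}\|_0^2+\cdots\big)+\kappa\int_0^t e^{2\alpha s}|\mathbf{\xi}|_1^2 ds + (\text{curvature-free remainder}),
\end{align*}
but in fact the cleaner route—and the one suggested by the placement right after Lemma \ref{l3.5}—is to first establish the $L^2$ bound directly from the duality estimate. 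Namely, I would recall the duality identity (\ref{eqb3.7}) at the specific time $\varsigma=t$ is not what we need; instead the pointwise $L^2$-bound on $\mathbf{\xi}(t)$ should come from testing (\ref{eqb3.6}) with $2e^{2\alpha t}P_h\mathbf{\xi}$ and using $\|\mathbf{u}-P_h\mathbf{u}\|_0\le \|\mathbf{\xi}\|_0$ (which was shown in Lemma \ref{l3.5}), giving a Grönwall-type differential inequality for $y(t):=e^{2\alpha t}\|\mathbf{\xi}\|_0^2$ whose forcing involves $h^2 e^{2\alpha t}(\|A\mathbf{u}\|_0^2+\|p\|_1^2)$ and the convolution term $e^{2\alpha t}(\mathcal Q_{\beta,\delta}*|\mathbf{\xi}|_1)^2$. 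Feeding in the already-proven integral bound $e^{-2\alpha t}\int_0^t e^{2\alpha s}|\mathbf{\xi}|_1^2\,ds\le\kappa h^4 e^{-2\alpha t}$ from Lemma \ref{l3.5}, together with Lemma \ref{TRR1} and Theorem \ref{TRR} for $\|A\mathbf{u}\|_0^2$ and $\|p\|_1^2$, I can integrate and conclude $\|\mathbf{\xi}\|_0^2\le\kappa h^4 e^{-2\alpha t}$.

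For the $H^1$ part, once $\|\mathbf{\xi}\|_0^2\le\kappa h^4 e^{-2\alpha t}$ is in hand, I would go back to the differential inequality, drop the nonnegative $J(t;\mathbf{\xi},\mathbf{\xi})$ and the negative memory contribution appropriately, and integrate by parts once more—or more simply, since we only need $h^2|\mathbf{\xi}|_1^2\le\kappa h^4e^{-2\alpha t}$, i.e. $|\mathbf{\xi}|_1^2\le\kappa h^2 e^{-2\alpha t}$, this follows from the already-established $\mu|\mathbf{u}-S_h\mathbf{u}|_1^2\le\kappa h^2e^{-2\alpha t}$ in Lemma \ref{l5.4} (estimate (\ref{eql4.5})) combined with a triangle-inequality comparison of $\mathbf{v}_h$ against $S_h\mathbf{u}$. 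Concretely, subtracting (\ref{eqp13.1}) from (\ref{eqb3.5}) shows that $\mathbf{v}_h-S_h\mathbf{u}$ solves an evolution equation whose right-hand side is controlled by $\|(\mathbf{u}-S_h\mathbf{u})_t\|_0$, which is $\mathcal O(h^2 e^{-\alpha t})$ by (\ref{eql4.1}); testing with $A_h(\mathbf{v}_h-S_h\mathbf{u})$ and using the inverse inequality where needed, together with Lemma \ref{lemgenegron} under condition (\ref{cond}), yields $|\mathbf{v}_h-S_h\mathbf{u}|_1^2\le\kappa h^2 e^{-2\alpha t}$, hence $|\mathbf{\xi}|_1^2\le 2|\mathbf{u}-S_h\mathbf{u}|_1^2+2|\mathbf{v}_h-S_h\mathbf{u}|_1^2\le\kappa h^2 e^{-2\alpha t}$.

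The main obstacle I anticipate is the pointwise-in-time $L^2$ bound: unlike the integrated estimate of Lemma \ref{l3.5}, here one must carefully retain and sign-control the memory term $J(t;\mathbf{\xi},\mathbf{\xi})$ in a \emph{differential} (not integrated) inequality, and then close the argument with the convolution Grönwall inequality of Lemma \ref{lemgenegron}—which is exactly why condition (\ref{cond}) is imposed. The subtlety is ensuring the constant $C$ appearing in the convolution term, after all the Young/Cauchy–Schwarz splittings, still satisfies $C\Gamma(1-\beta)<(\delta-2\alpha)^{1-\beta}$ so that (\ref{zmh}) applies; tracking the $\frac{4\rho^2}{\mu^2}$-type factor through the estimates and checking it against (\ref{cond}) is the delicate bookkeeping, but no genuinely new idea beyond Lemmas \ref{lemgenegron}, \ref{l3.5}, and \ref{l5.4} is required.
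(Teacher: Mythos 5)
Your plan for the $H^1$ part is essentially the paper's argument, but your route to the pointwise $L^2$ bound has a genuine gap. Testing (\ref{eqb3.6}) with $2e^{2\alpha t}P_h\mathbf{\xi}$ produces the consistency terms $2\mu e^{2\alpha t}a(\mathbf{\xi},\mathbf{u}-P_h\mathbf{u})$ and $2e^{2\alpha t}(p,\nabla\cdot P_h\mathbf{\xi})$; after Young's inequality each leaves a forcing of size $\kappa h^2 e^{2\alpha t}(\|A\mathbf{u}\|_0^2+\|p\|_1^2)$ --- exactly as you write. Integrating that forcing over $[0,t]$ and multiplying by $e^{-2\alpha t}$ gives $\kappa h^{2}e^{-2\alpha t}$, not $\kappa h^{4}e^{-2\alpha t}$, and feeding in the integrated bounds of Lemma \ref{l3.5} does not repair this: $e^{-2\alpha t}\int_0^te^{2\alpha s}|\mathbf{\xi}|_1^2ds$ is itself only $O(h^2)$, so the convolution term contributes at the same order. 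No Gr\"onwall manipulation can recover the missing factor of $h^2$; the direct energy estimate on $\mathbf{\xi}$ is structurally suboptimal in $L^2$, and the duality argument of Lemma \ref{l3.5} gives $h^4$ only for the \emph{time-integrated} $L^2$ norm, as you correctly observe.

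The missing idea --- which the paper uses, and which you actually invoke for the $H^1$ part but do not deploy where it is indispensable --- is to run the pointwise energy estimate on $S_h\mathbf{u}-\mathbf{v}_h$ rather than on $\mathbf{\xi}$. Subtracting (\ref{eqb3.6}) from (\ref{eqp13.1}) shows that
\begin{align*}
\big((S_h\mathbf{u}-\mathbf{v}_h)_t,\mathbf{\phi}_h\big)+\mu a(S_h\mathbf{u}-\mathbf{v}_h,\mathbf{\phi}_h)+J(t;S_h\mathbf{u}-\mathbf{v}_h,\mathbf{\phi}_h)=\big((S_h\mathbf{u}-\mathbf{u})_t,\mathbf{\phi}_h\big),
\end{align*}
i.e.\ all elliptic and pressure consistency terms cancel by the definition of the Volterra--Stokes projection, and the only forcing is $((S_h\mathbf{u}-\mathbf{u})_t,\mathbf{\phi}_h)$, which is $O(h^2)$ in $L^2$ by (\ref{eql4.1}) and hence $O(h^4)$ after squaring. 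Taking $\mathbf{\phi}_h=2e^{2\alpha t}(S_h\mathbf{u}-\mathbf{v}_h)$, integrating in time, using the positivity of $J$, and controlling the remaining $\int_0^te^{2\alpha s}\|S_h\mathbf{u}-\mathbf{v}_h\|_0^2ds$ through the splitting $S_h\mathbf{u}-\mathbf{v}_h=(S_h\mathbf{u}-\mathbf{u})+\mathbf{\xi}$ together with the integrated bounds of Lemma \ref{l5.4} and Lemma \ref{l3.5} yields $\|S_h\mathbf{u}-\mathbf{v}_h\|_0^2\le\kappa h^4e^{-2\alpha t}$ with no Gr\"onwall step at all; the inverse inequality (\ref{eq4.10}) then gives $|S_h\mathbf{u}-\mathbf{v}_h|_1^2\le\kappa h^2e^{-2\alpha t}$, and the triangle inequality with (\ref{eql4.0}) finishes both estimates. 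Your $A_h$-testing variant for the $H^1$ bound would also work, but it is unnecessary once the $L^2$ bound on $S_h\mathbf{u}-\mathbf{v}_h$ is in hand.
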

\begin{proof} First, we use (\ref{eqp13.1}), (\ref{eqb3.6}) and $\mathbf{\xi}-(\mathbf{u}-S_h\mathbf{u})=S_h\mathbf{u}-\mathbf{v}_h$ to obtain
\begin{align}
\hspace{-0.125in}\big((S_h\mathbf{u}-\mathbf{v}_h)_t,\mathbf{\phi}_h\big)+\mu a(S_h\mathbf{u}-\mathbf{v}_h,\mathbf{\phi}_h)+J(t;S_h\mathbf{u}-\mathbf{v}_h,\mathbf{\phi}_h)=\big((S_h\mathbf{u}-\mathbf{u})_t,\mathbf{\phi}_h\big). \label{kj}
\end{align}
We set $\mathbf{\phi}_h=2e^{2\alpha t}(S_h\mathbf{u}-\mathbf{v}_h)$ in (\ref{kj}). We invoke
\begin{align*}
2e^{2\alpha t}\big((S_h\mathbf{u}-\mathbf{u})_t,S_h\mathbf{u}-\mathbf{v}_h\big)\leq e^{2\alpha t}\|S_h\mathbf{u}-\mathbf{v}_h\|_0^2
+e^{2\alpha t}\|(S_h\mathbf{u}-\mathbf{u})_t\|_0^2,
\end{align*}
then we to have
\begin{align}
&\frac{d}{dt}(e^{2\alpha t}\|S_h\mathbf{u}-\mathbf{v}_h\|_0^2)+2\mu e^{2\alpha t}|S_h\mathbf{u}-\mathbf{v}_h|_1^2+2e^{2\alpha t}J(t;S_h\mathbf{u}-\mathbf{v}_h,S_h\mathbf{u}-\mathbf{v}_h)\nonumber\\
&\leq e^{2\alpha t}\|(S_h\mathbf{u}-\mathbf{u})_t\|_0^2+(2\alpha+1)e^{2\alpha t}\|S_h\mathbf{u}-\mathbf{v}_h\|_0^2.\label{eq5.5}
\end{align}
We integrate (\ref{eq5.5}) in time from $0$ to $t$, split term $S_h\mathbf{u}-\mathbf{v}_h$ and use Lemma \ref{L2.1} to get
\begin{align}
&e^{2\alpha t}\|S_h\mathbf{u}-\mathbf{v}_h\|_0^2+2\mu\int_0^te^{2\alpha s}|S_h\mathbf{u}-\mathbf{v}_h|_1^2ds\nonumber\\
&\leq \int_0^te^{2\alpha s}\|(S_h\mathbf{u}-\mathbf{u})_s\|_0^2ds+(2\alpha+1)\int_0^te^{2\alpha s}(\|\mathbf{u}-S_h\mathbf{u}\|_0^2+\|\mathbf{\xi}\|_0^2)ds.\label{eq5.1}
\end{align}
We multiply $e^{-2\alpha t}$ to (\ref{eq5.1}) and use Lemma \ref{l5.4} and Lemma \ref{l3.5} to obtain, for all $t\geq0$
\begin{align}
\|S_h\mathbf{u}-\mathbf{v}_h\|_0^2\leq\kappa h^4e^{-2\alpha t}.\label{eq5.4}
\end{align}
We use (\ref{eq4.10}) and (\ref{eq5.4}) to bound $\|S_h\mathbf{u}-\mathbf{v}_h\|_1^2$. Finally, we obtain the desired results by combining (\ref{eql4.0}) and the triangle inequality.
\end{proof}

\begin{theorem}\label{thmf} Under the  conditions of  Lemma \ref{l5.4}, we have
\begin{align*}
\|\mathbf{u}-\mathbf{u}_h\|_0^2+h^2|\mathbf{u}-\mathbf{u}_h|_1^2\leq \kappa h^4e^{-2\alpha t}, \quad  t\geq0.
\end{align*}
\end{theorem}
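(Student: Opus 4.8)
The plan is to use the decomposition $\mathbf{u}-\mathbf{u}_h=\mathbf{\xi}+\mathbf{e}_h$ with $\mathbf{\xi}=\mathbf{u}-\mathbf{v}_h$ and $\mathbf{e}_h:=\mathbf{v}_h-\mathbf{u}_h\in V_h$. Lemma~\ref{l6.2} already bounds $\mathbf{\xi}$, and the inverse inequality (\ref{eq4.10}) gives $h^2|\mathbf{e}_h|_1^2\leq c\|\mathbf{e}_h\|_0^2$; hence by the triangle inequality it suffices to prove $\|\mathbf{e}_h\|_0^2\leq\kappa h^4e^{-2\alpha t}$ for all $t\geq0$.

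I would subtract (\ref{eq4.8}), tested only against $(\mathbf{\phi}_h,q_h)\in(V_h,M_h)$ so that the pressure terms drop, from (\ref{eqb3.5}); since $\mathbf{e}_h(0)=\mathbf{v}_h(0)-\mathbf{u}_h(0)=\mathbf{0}$ this yields the error equation $(\mathbf{e}_{ht},\mathbf{\phi}_h)+\mu a(\mathbf{e}_h,\mathbf{\phi}_h)+J(t;\mathbf{e}_h,\mathbf{\phi}_h)=b(\mathbf{u}_h,\mathbf{u}_h,\mathbf{\phi}_h)-b(\mathbf{u},\mathbf{u},\mathbf{\phi}_h)$ for all $\mathbf{\phi}_h\in V_h$. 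Writing $\mathbf{u}_h=\mathbf{v}_h-\mathbf{e}_h$, expanding the nonlinear difference, and testing with $\mathbf{\phi}_h=2e^{2\alpha t}\mathbf{e}_h$, the skew-symmetry (\ref{eq2.01}) (which kills $b(\mathbf{v}_h,\mathbf{e}_h,\mathbf{e}_h)$ and $b(\mathbf{e}_h,\mathbf{e}_h,\mathbf{e}_h)$) reduces the right-hand side to $-2e^{2\alpha t}\big[b(\mathbf{v}_h,\mathbf{\xi},\mathbf{e}_h)+b(\mathbf{\xi},\mathbf{u},\mathbf{e}_h)+b(\mathbf{e}_h,\mathbf{v}_h,\mathbf{e}_h)\big]$. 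Integrating over $[0,t]$, dropping the memory term by $\int_0^te^{2\alpha s}J(s;\mathbf{e}_h,\mathbf{e}_h)\,ds\geq0$ (Lemma~\ref{L2.1}, applied after the weight identity $e^{2\alpha s}\mathcal Q_{\beta,\delta}(s-r)=e^{\alpha(r+s)}\mathcal Q_{\beta,\delta-\alpha}(s-r)$ so that $\delta-\alpha>0$), and absorbing the contribution of $-2\alpha e^{2\alpha t}\|\mathbf{e}_h\|_0^2$ into the dissipation through $2\alpha\|\mathbf{e}_h\|_0^2\leq\frac{\mu}{2}|\mathbf{e}_h|_1^2$ (valid since $\alpha<\mu\gamma_0/4$, cf. (\ref{eq2.4})), I arrive at
\begin{align*}
e^{2\alpha t}\|\mathbf{e}_h\|_0^2+\frac{3\mu}{2}\int_0^te^{2\alpha s}|\mathbf{e}_h|_1^2\,ds\leq 2\int_0^te^{2\alpha s}\big(|b(\mathbf{v}_h,\mathbf{\xi},\mathbf{e}_h)|+|b(\mathbf{\xi},\mathbf{u},\mathbf{e}_h)|+|b(\mathbf{e}_h,\mathbf{v}_h,\mathbf{e}_h)|\big)\,ds.
\end{align*}

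The heart of the proof is to estimate the three trilinear terms. I would first record that Lemma~\ref{TRR1} and the 2D embeddings $D(A)\hookrightarrow L^\infty$, $H^1\hookrightarrow L^4$ give $\|\mathbf{u}\|_{L^\infty}+\|\nabla\mathbf{u}\|_{L^4}\leq\kappa e^{-\alpha t}$; that Lemma~\ref{l6.2} gives $\|\mathbf{\xi}\|_0\leq\kappa h^2e^{-\alpha t}$, $|\mathbf{\xi}|_1\leq\kappa he^{-\alpha t}$, whence $\|\mathrm{div}\,\mathbf{v}_h\|_0=\|\mathrm{div}(\mathbf{v}_h-\mathbf{u})\|_0\leq c|\mathbf{\xi}|_1\leq\kappa he^{-\alpha t}$; and that Lemma~\ref{l3.8} combined with $h^{-3/2}\|\mathbf{\xi}\|_0\leq\kappa h^{1/2}e^{-\alpha t}$ gives $\|\mathbf{v}_h\|_{L^\infty}+\|\nabla\mathbf{v}_h\|_{L^3}\leq\kappa e^{-\alpha t}$. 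The crucial manoeuvre is that in $b(\mathbf{v}_h,\mathbf{\xi},\mathbf{e}_h)$ and $b(\mathbf{\xi},\mathbf{u},\mathbf{e}_h)$ one shifts every spatial derivative off the projection error $\mathbf{\xi}$ onto $\mathbf{e}_h$, using (\ref{eq2.01}) and integration by parts, so that $\mathbf{\xi}$ enters only through $\|\mathbf{\xi}\|_0=O(h^2)$ and never through $\|\mathbf{\xi}\|_{L^4}\sim\|\mathbf{\xi}\|_0^{1/2}|\mathbf{\xi}|_1^{1/2}=O(h^{3/2})$; with Ladyzhenskaya's inequality $\|\mathbf{e}_h\|_{L^4}^2\leq c\|\mathbf{e}_h\|_0|\mathbf{e}_h|_1$ and Young's inequality this yields $|b(\mathbf{v}_h,\mathbf{\xi},\mathbf{e}_h)|+|b(\mathbf{\xi},\mathbf{u},\mathbf{e}_h)|\leq\varepsilon|\mathbf{e}_h|_1^2+\kappa h^4e^{-4\alpha t}$ with $\varepsilon$ as small as desired, the residual divergence piece being controlled by $\|\mathrm{div}\,\mathbf{v}_h\|_0$ and $\|\mathbf{\xi}\|_0$. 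The genuinely quadratic term is bounded by $|b(\mathbf{e}_h,\mathbf{v}_h,\mathbf{e}_h)|\leq c\|\mathbf{e}_h\|_0|\mathbf{e}_h|_1(|\mathbf{v}_h|_1+\|\nabla\mathbf{v}_h\|_{L^3})\leq\varepsilon|\mathbf{e}_h|_1^2+\kappa e^{-2\alpha t}\|\mathbf{e}_h\|_0^2$, whose last term carries an integrable, $\sim e^{-2\alpha t}$, coefficient. Choosing $\varepsilon$ small, absorbing the $|\mathbf{e}_h|_1^2$ pieces into the left-hand side, and applying Lemma~\ref{L4.2G} with $y(s)=e^{2\alpha s}\|\mathbf{e}_h\|_0^2$, $g(s)=\kappa e^{-2\alpha s}\in L^1(0,\infty)$, $h(s)\equiv0$ and $C=\kappa h^4$, one obtains $e^{2\alpha t}\|\mathbf{e}_h\|_0^2\leq\kappa h^4$, i.e. $\|\mathbf{e}_h\|_0^2\leq\kappa h^4e^{-2\alpha t}$; combining with Lemma~\ref{l6.2} and (\ref{eq4.10}) finishes the theorem.

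I expect the main obstacle to be exactly this bookkeeping in the trilinear terms: $\mathbf{\xi}$ must always be measured in $L^2$ (order $h^2$) and never in $L^4$ (which, interpolated against the only first-order $H^1$ bound, would leave the suboptimal $O(h^{3/2})$), which is what forces the use of the $L^\infty$/$W^{1,4}$ bounds on $\mathbf{u}$ and, crucially, the $L^\infty$/$W^{1,3}$ bounds on $\mathbf{v}_h$ supplied by Lemma~\ref{l3.8}, together with the treatment of the spurious divergence of $\mathbf{v}_h$ via $\|\mathrm{div}\,\mathbf{v}_h\|_0\leq c|\mathbf{\xi}|_1$. A second, lighter point is that every coefficient multiplying $\|\mathbf{e}_h\|_0^2$ must decay exponentially so the Gr\"onwall exponent stays finite and one gets decay rather than mere boundedness, and the memory form $J$ must be kept nonnegative after the exponential weighting through Lemma~\ref{L2.1}.
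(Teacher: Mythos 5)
Your proposal follows essentially the same route as the paper: the same splitting $\mathbf{u}-\mathbf{u}_h=\mathbf{\xi}+(\mathbf{v}_h-\mathbf{u}_h)$ built on the auxiliary problem (\ref{eqb3.5}), the same weighted energy identity tested with $2e^{2\alpha t}(\mathbf{v}_h-\mathbf{u}_h)$, the same reliance on Lemma \ref{l3.8} and on measuring $\mathbf{\xi}$ only in $L^2$ inside the trilinear terms, and the same classical Gr\"onwall Lemma \ref{L4.2G} followed by the inverse inequality (\ref{eq4.10}); it is correct. The one minor caveat is that when $\alpha=0$ your pointwise forcing bound $\kappa h^4e^{-4\alpha s}$ integrates to $\kappa h^4 t$, so there one should instead invoke the integrated estimate of Lemma \ref{l3.5}, $\int_0^te^{2\alpha s}\|\mathbf{\xi}\|_0^2ds\leq\kappa h^4$, which is exactly what the paper does.
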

\begin{proof}
We subtract (\ref{eq4.8}) from (\ref{eqb3.5}), set $\mathbf{\phi}_h=2e^{2\alpha t}(\mathbf{v}_{h}-\mathbf{u}_{h})$, $q_h=-2e^{2\alpha t}p_{h}$, and note $d(\mathbf{v}_h,p_h)=0$, $b(\cdot,\mathbf{v}_h-\mathbf{u}_{h},\mathbf{v}_h-\mathbf{u}_{h})=0$ to get
\begin{align*}
&e^{2\alpha t}\frac{d}{dt}\|\mathbf{v}_{h}-\mathbf{u}_{h}\|_0^2+2\mu e^{2\alpha t}|\mathbf{v}_{h}-\mathbf{u}_{h}|_1^2+2e^{2\alpha t}J(t;\mathbf{v}_{h}-\mathbf{u}_{h},\mathbf{v}_{h}-\mathbf{u}_{h})\\
&=-2e^{2\alpha t}b(\mathbf{\xi},\mathbf{v}_{h},\mathbf{v}_h-\mathbf{u}_{h})-2e^{2\alpha t}b(\mathbf{v}_h-\mathbf{u}_h,\mathbf{v}_{h},\mathbf{v}_h-\mathbf{u}_{h})-2e^{2\alpha t}b(\mathbf{u},\mathbf{\xi},\mathbf{v}_h-\mathbf{u}_{h}).
\end{align*}
Hence, combining (\ref{eq2.01}) and lemma \ref{l3.8} leads to
\begin{align*}
&2e^{2\alpha t}|-b(\mathbf{\xi},\mathbf{v}_{h},\mathbf{v}_h-\mathbf{u}_{h})|\leq2e^{2\alpha t}\|\mathbf{\xi}\|_0|\mathbf{v}_h-\mathbf{u}_{h}|_1(\|\mathbf{v}_{h}\|_{L^\infty}+\|\nabla\mathbf{v}_{h}\|_{L^3})\\
&\leq \frac{\mu}{6}e^{2\alpha t}|\mathbf{v}_{h}-\mathbf{u}_{h}|_1^2+6\mu^{-1}e^{2\alpha t}\|\mathbf{\xi}\|_0^2(c|\mathbf{u}|_1^{\frac{1}{2}}\|A\mathbf{u}\|_0^{\frac{1}{2}}+h^{-\frac{3}{2}}\|\mathbf{\xi}\|_0+ch^{\frac{1}{2}}\|A\mathbf{u}\|_0)^2,\\
&2e^{2\alpha t}|-b(\mathbf{v}_h-\mathbf{u}_h,\mathbf{v}_{h},\mathbf{v}_h-\mathbf{u}_{h})|\leq2e^{2\alpha t}\|\mathbf{v}_h-\mathbf{u}_{h}\|_0|\mathbf{v}_h-\mathbf{u}_{h}|_1(\|\mathbf{v}_{h}\|_{L^\infty}+\|\nabla\mathbf{v}_{h}\|_{L^3})\\
&\leq \frac{\mu}{6}e^{2\alpha t}|\mathbf{v}_{h}-\mathbf{u}_{h}|_1^2+6\mu^{-1}e^{2\alpha t}\|\mathbf{v}_h-\mathbf{u}_{h}\|_0^2(c|\mathbf{u}|_1^{\frac{1}{2}}\|A\mathbf{u}\|_0^{\frac{1}{2}}+h^{-\frac{3}{2}}\|\mathbf{\xi}\|_0+ch^{\frac{1}{2}}\|A\mathbf{u}\|_0)^2,\\
&2e^{2\alpha t}|-b(\mathbf{u},\mathbf{\xi},\mathbf{v}_h-\mathbf{u}_{h})|\leq2e^{2\alpha t}|\mathbf{u}|_1^{\frac{1}{2}}\|A\mathbf{u}\|_0^{\frac{1}{2}}|\mathbf{v}_h-\mathbf{u}_{h}|_1\|\mathbf{\xi}\|_0\\
&\leq \frac{\mu}{6}e^{2\alpha t}|\mathbf{v}_{h}-\mathbf{u}_{h}|_1^2+6\mu^{-1}e^{2\alpha t}|\mathbf{u}|_1\|A\mathbf{u}\|_0\|\mathbf{\xi}\|_0^2.
\end{align*}
We employ these inequalities and use $\frac{\mu}{2} e^{2\alpha t}|\mathbf{v}_{h}-\mathbf{u}_{h}|_1^2\geq2\alpha e^{2\alpha t}\|\mathbf{v}_{h}-\mathbf{u}_{h}\|_0^2$ to get
\begin{align}
&\frac{d}{dt}(e^{2\alpha t}\|\mathbf{v}_{h}-\mathbf{u}_{h}\|_0^2)+\mu e^{2\alpha t}|\mathbf{v}_{h}-\mathbf{u}_{h}|_1^2+2e^{2\alpha t}J(t;\mathbf{v}_{h}-\mathbf{u}_{h},\mathbf{v}_{h}-\mathbf{u}_{h})\nonumber\\
&\leq c\mu^{-1}e^{2\alpha t}(\|\mathbf{\xi}\|_0^2+\|\mathbf{v}_h-\mathbf{u}_{h}\|_0^2)(|\mathbf{u}|_1\|A\mathbf{u}\|_0+h^{-3}\|\mathbf{\xi}\|_0^2+h\|A\mathbf{u}\|_0^2).\label{eqae4.3}
\end{align}
We integrate  (\ref{eqae4.3})  from $0$ to $t$ and use $\mathbf{v}_{h}(0)-\mathbf{u}_{h}(0)=\mathbf{0}$, Lemma \ref{TRR1} and Lemma \ref{l3.5} to get
\begin{align*}
&e^{2\alpha t}\|\mathbf{v}_{h}-\mathbf{u}_{h}\|_0^2+\mu\int_0^te^{2\alpha s}|\mathbf{v}_{h}-\mathbf{u}_{h}|_1^2ds+2\int_0^te^{2\alpha s}J(s;\mathbf{v}_{h}-\mathbf{u}_{h},\mathbf{v}_{h}-\mathbf{u}_{h})ds\nonumber\\
&\leq \kappa h^4e^{-2\alpha t}+\kappa\int_0^t(|\mathbf{u}|_1\|A\mathbf{u}\|_0+h^{-3}\|\mathbf{\xi}\|_0^2+h\|A\mathbf{u}\|_0^2)e^{2\alpha s}\|\mathbf{v}_h-\mathbf{u}_{h}\|_0^2ds.\nonumber
\end{align*}
By Lemma \ref{TRR1} and Lemma \ref{l6.2}, we have
$\kappa\int_0^t(|\mathbf{u}|_1\|A\mathbf{u}\|_0+h^{-3}\|\mathbf{\xi}\|_0^2+h\|A\mathbf{u}\|_0^2)ds\leq \kappa(1+h)$, and we multiply this by $e^{-2\alpha t}$ and use Lemmas \ref{L2.1}--\ref{L4.2G} to obtain
\begin{eqnarray}
&&\|\mathbf{v}_{h}-\mathbf{u}_{h}\|_0^2+\mu e^{-2\alpha t}\int_0^te^{2\alpha s}|\mathbf{v}_{h}-\mathbf{u}_{h}|_1^2ds\leq \kappa h^4e^{-2\alpha t}.\label{eqae4.5}
\end{eqnarray}
We use the triangle inequality, (\ref{eqae4.5}) and Lemma \ref{l6.2} to obtain $\|\mathbf{u}-\mathbf{u}_h\|_0^2\leq \kappa h^4e^{-2\alpha t}$. Further we note $|\mathbf{u}-\mathbf{u}_h|_1^2\leq|\mathbf{\xi}|_1^2+|\mathbf{v}_h-\mathbf{u}_h|_1^2\leq|\mathbf{\xi}|_1^2+h^{-2}\|\mathbf{v}_h-\mathbf{u}_h\|_0^2$, apply Lemma \ref{l6.2}, (\ref{eqae4.5}) to obtain the estimate of $|\mathbf{u}-\mathbf{u}_h|_1^2$.
\end{proof}

\section{Error estimate of pressure} \label{sec6}
We prove the long-time $L^2$ error of pressure.

\begin{theorem} \label{t6.1} Under the  conditions of Lemma \ref{l5.4} and $\mathbf{f}(\mathbf{x},0)=\mathbf{0}$, we have
$
\|p-p_h\|_0^2\leq \kappa h^2e^{-2\alpha t}$ for $ t\geq 0$.
\end{theorem}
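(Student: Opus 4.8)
The plan is the classical inf--sup (mixed parabolic) argument: recover $p-p_h$ from the residual of the discrete momentum equation, feeding in the velocity errors of Theorem \ref{thmf} and the auxiliary bounds of Lemma \ref{l5.4}. Subtracting (\ref{eq4.8}) (with $q_h=0$) from (\ref{eq3.7fp}) and isolating the pressure term gives, for every $\mathbf{\phi}_h\in X_h$ and $t\ge0$,
\begin{align*}
d(\mathbf{\phi}_h,p-p_h)=\big((\mathbf{u}-\mathbf{u}_h)_t,\mathbf{\phi}_h\big)+\mu a(\mathbf{u}-\mathbf{u}_h,\mathbf{\phi}_h)+\big[b(\mathbf{u},\mathbf{u},\mathbf{\phi}_h)-b(\mathbf{u}_h,\mathbf{u}_h,\mathbf{\phi}_h)\big]+J(t;\mathbf{u}-\mathbf{u}_h,\mathbf{\phi}_h).
\end{align*}
Writing $p-p_h=(p-\rho_h p)+(\rho_h p-p_h)$, estimating $\|p-\rho_h p\|_0\le ch\|p\|_1\le\kappa he^{-\alpha t}$ by (\ref{eq4.1}) and Theorem \ref{TRR}, and applying the discrete inf--sup inequality to $\rho_h p-p_h\in M_h$, the proof reduces to bounding each of the four right-hand terms by $\kappa he^{-\alpha t}|\mathbf{\phi}_h|_1$; then $\|\rho_h p-p_h\|_0\le\kappa he^{-\alpha t}$ and the triangle inequality give $\|p-p_h\|_0^2\le\kappa h^2e^{-2\alpha t}$.

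Three of the four terms yield immediately. The viscous term is $\le\mu|\mathbf{u}-\mathbf{u}_h|_1|\mathbf{\phi}_h|_1\le\kappa he^{-\alpha t}|\mathbf{\phi}_h|_1$ by Theorem \ref{thmf}. The memory term satisfies $|J(t;\mathbf{u}-\mathbf{u}_h,\mathbf{\phi}_h)|\le\rho(\mathcal Q_{\beta,\delta}*|\mathbf{u}-\mathbf{u}_h|_1)|\mathbf{\phi}_h|_1$, and since $|\mathbf{u}-\mathbf{u}_h|_1\le\kappa he^{-\alpha s}$ one has $\mathcal Q_{\beta,\delta}*|\mathbf{u}-\mathbf{u}_h|_1\le\kappa he^{-\alpha t}\,\mathcal Q_{\beta,\delta-\alpha}*1\le\kappa he^{-\alpha t}$ by (\ref{gambnd}). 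For the convective term I would write $b(\mathbf{u},\mathbf{u},\mathbf{\phi}_h)-b(\mathbf{u}_h,\mathbf{u}_h,\mathbf{\phi}_h)=b(\mathbf{u}-\mathbf{u}_h,\mathbf{u},\mathbf{\phi}_h)+b(\mathbf{u}_h,\mathbf{u}-\mathbf{u}_h,\mathbf{\phi}_h)$; the first is controlled by (\ref{eq2.03}) and the uniform bounds $|\mathbf{u}|_1,\|A\mathbf{u}\|_0\le\kappa e^{-\alpha t}$ (Lemma \ref{TRR1}) together with Theorem \ref{thmf}, and the second, after using antisymmetry (\ref{eq2.01}) to move $\mathbf{u}-\mathbf{u}_h$ into the last slot, by Hölder's inequality and the two-dimensional embedding $H^1(\Omega)\hookrightarrow L^4(\Omega)$, the bounds $\|\mathbf{u}_h\|_{L^4}\le\kappa e^{-\alpha t}$ (triangle inequality and Theorem \ref{thmf}) and $\|\mathbf{u}-\mathbf{u}_h\|_{L^4}\le\kappa h^{3/2}e^{-\alpha t}$; both pieces are $\le\kappa he^{-\alpha t}|\mathbf{\phi}_h|_1$.

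The remaining term $((\mathbf{u}-\mathbf{u}_h)_t,\mathbf{\phi}_h)$ is the delicate one. Rather than estimating $\|(\mathbf{u}-\mathbf{u}_h)_t\|_0$ pointwise, I would split $\mathbf{u}-\mathbf{u}_h=(\mathbf{u}-S_h\mathbf{u})+(S_h\mathbf{u}-\mathbf{v}_h)+(\mathbf{v}_h-\mathbf{u}_h)$: the first gives $|((\mathbf{u}-S_h\mathbf{u})_t,\mathbf{\phi}_h)|\le\|(\mathbf{u}-S_h\mathbf{u})_t\|_0\|\mathbf{\phi}_h\|_0\le\kappa h^2e^{-\alpha t}|\mathbf{\phi}_h|_1$ by (\ref{eql4.1}); for the last two, $(S_h\mathbf{u}-\mathbf{v}_h)_t$ and $(\mathbf{v}_h-\mathbf{u}_h)_t$ belong to the time-independent space $V_h$, so $L^2$-orthogonality of $P_h$ gives $((S_h\mathbf{u}-\mathbf{v}_h)_t,\mathbf{\phi}_h)=((S_h\mathbf{u}-\mathbf{v}_h)_t,P_h\mathbf{\phi}_h)$ and likewise for $\mathbf{v}_h-\mathbf{u}_h$, and inserting $P_h\mathbf{\phi}_h\in V_h$ into the evolution equation (\ref{kj}) for $S_h\mathbf{u}-\mathbf{v}_h$ and into the equation for $\mathbf{v}_h-\mathbf{u}_h$ from the proof of Theorem \ref{thmf} re-expresses these inner products as viscous, memory, and convective terms of exactly the types already treated, now involving $|S_h\mathbf{u}-\mathbf{v}_h|_1,|\mathbf{v}_h-\mathbf{u}_h|_1\le\kappa he^{-\alpha t}$ (from (\ref{eq5.4}), (\ref{eqae4.5}) and the inverse inequality (\ref{eq4.10})), $\|(S_h\mathbf{u}-\mathbf{u})_t\|_0\le\kappa h^2e^{-\alpha t}$ (again (\ref{eql4.1})), and $|P_h\mathbf{\phi}_h|_1\le c|\mathbf{\phi}_h|_1$; all are $\le\kappa he^{-\alpha t}|\mathbf{\phi}_h|_1$. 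The hypothesis $\mathbf{f}(\mathbf{x},0)=\mathbf{0}$, with $\mathbf{u}(0)=\mathbf{0}$, forces $\mathbf{u}_t(0)=P\mathbf{f}(0)=\mathbf{0}$ and $\nabla p(0)=\mathbf{0}$, which is what makes the initial data of the time-differentiated quantities used above vanish and stay compatible with the exponential decay. Collecting the four bounds and dividing by $c|\mathbf{\phi}_h|_1$ yields $\|\rho_h p-p_h\|_0\le\kappa he^{-\alpha t}$, hence $\|p-p_h\|_0^2\le\kappa h^2e^{-2\alpha t}$.

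I expect the main obstacle to be bookkeeping rather than conceptual: one must check that every constant in the convective and memory estimates is independent of $h$ and $t$ so that the factor $e^{-\alpha t}$ is genuinely propagated (invoking $\mathcal Q_{\beta,\delta}*e^{-\alpha s}\le\Gamma(1-\beta)(\delta-\alpha)^{-(1-\beta)}e^{-\alpha t}$ whenever a memory term appears), and confirm that the $L^4$ bound on $\mathbf{u}_h$ and the $V_h$-valued evolution equations for $S_h\mathbf{u}-\mathbf{v}_h$ and $\mathbf{v}_h-\mathbf{u}_h$ carry the asserted decay. An alternative, in the spirit of the classical Heywood--Rannacher analysis, would be to first establish the pointwise estimate $\|(\mathbf{u}-\mathbf{u}_h)_t\|_0^2\le\kappa h^2e^{-2\alpha t}$ by differentiating the discrete and continuous equations in time --- legitimate precisely because $\mathbf{f}(\mathbf{x},0)=\mathbf{0}$ annihilates the initial data of the differentiated error problem and the vanishing initial data deactivates the $s^{-\beta}$ singularity of $\mathcal Q_{\beta,\delta}$ at $s=0$ --- and then read the pressure error off the identity directly; that route is where I would anticipate the heaviest computation.
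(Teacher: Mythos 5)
Your proposal is correct in outline, but it handles the one genuinely delicate term by a different mechanism than the paper. Both arguments end identically: write $p-p_h=(p-\rho_hp)+(\rho_hp-p_h)$, bound $\|p-\rho_hp\|_0$ by (\ref{eq4.1}) and Theorem \ref{TRR}, and control $\rho_hp-p_h$ through the inf--sup inequality applied to the residual identity obtained by subtracting (\ref{eq4.8}) from the continuous equation; the viscous, convective and memory residuals are estimated in essentially the same way in both. The difference is the term $((\mathbf{u}-\mathbf{u}_h)_t,\mathbf{\phi}_h)$. The paper takes exactly the route you relegate to your final paragraph as an ``alternative'': it differentiates the full error equation (\ref{ep5.2}) in time, tests with $2\mathbf{e}_{ht}$ where $\mathbf{e}_h=P_h\mathbf{u}-\mathbf{u}_h$, runs a weighted energy argument with the classical Gr\"onwall inequality (Lemma \ref{L4.2G}) and the regularity of Theorem \ref{TRR} to obtain the pointwise bound $\|(\mathbf{u}-\mathbf{u}_h)_t\|_0^2\leq\kappa h^2e^{-2\alpha t}$, and only then reads off the pressure; the hypothesis $\mathbf{f}(\mathbf{x},0)=\mathbf{0}$ enters precisely to guarantee $\mathbf{e}_{ht}(0)=\mathbf{0}$ for that differentiated problem (the $t^{-\beta}$ term generated by differentiating the convolution is killed by $\mathbf{u}(0)=\mathbf{0}$ alone). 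Your main route instead decomposes $\mathbf{u}-\mathbf{u}_h$ through the Volterra--Stokes projection and the auxiliary function $\mathbf{v}_h$, uses (\ref{eql4.1}) for $(\mathbf{u}-S_h\mathbf{u})_t$, and converts $((S_h\mathbf{u}-\mathbf{v}_h)_t,P_h\mathbf{\phi}_h)$ and $((\mathbf{v}_h-\mathbf{u}_h)_t,P_h\mathbf{\phi}_h)$ back into spatial terms via (\ref{kj}) and the equation for $\mathbf{v}_h-\mathbf{u}_h$, together with the inverse inequality applied to (\ref{eq5.4}) and (\ref{eqae4.5}); this checks out, avoids any second time derivative and any Gr\"onwall step, and in fact does not appear to use $\mathbf{f}(\mathbf{x},0)=\mathbf{0}$ at all (your remark attributing the role of that hypothesis to your main route is misplaced --- it is only needed for the differentiated-equation route). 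What your version gives up is the pointwise velocity-derivative estimate $\|(\mathbf{u}-\mathbf{u}_h)_t\|_0^2\leq\kappa h^2e^{-2\alpha t}$ (the paper's (\ref{eq5.8})), which the paper obtains as a byproduct of independent interest.
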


\begin{proof}
Subtracting (\ref{eq4.8}) from (\ref{eq3.7f}), we have for $(\mathbf{\phi}_h,q_h)\in (X_h, M_h)$
\begin{align}
&\big((\mathbf{u}-\mathbf{u}_h)_t,\mathbf{\phi}_h\big)+\mu a(\mathbf{u}-\mathbf{u}_h,\mathbf{\phi}_h)+J(t;\mathbf{u}-\mathbf{u}_h,\mathbf{\phi}_h)-b(\mathbf{u}-\mathbf{u}_h,\mathbf{u}-\mathbf{u}_{h},\mathbf{\phi}_h)\nonumber\\
&+b(\mathbf{u},\mathbf{u}-\mathbf{u}_{h},\mathbf{\phi}_h)+b(\mathbf{u}-\mathbf{u}_h,\mathbf{u},\mathbf{\phi}_h)-d(\mathbf{\phi}_h,p-p_h)+d(\mathbf{u}-\mathbf{u}_h,q_h)=0.\label{ep5.2}
\end{align}
We differentiate (\ref{ep5.2}) in time and note $d(\mathbf{u}-P_h\mathbf{u},q_h)=0$ to get
\begin{eqnarray}
&\big((\mathbf{u}-\mathbf{u}_h)_{tt},\mathbf{\phi}_h\big)+\mu a((\mathbf{u}-\mathbf{u}_h)_t,\mathbf{\phi}_h)-d(\mathbf{\phi}_h,(p-p_h)_t)+d((P_h\mathbf{u}-\mathbf{u}_h)_t,q_h)\nonumber\\
&+\rho t^{-\beta}e^{-\delta t}a((\mathbf{u}-\mathbf{u}_h)(0),\mathbf{\phi}_h)+J(t;(\mathbf{u}-\mathbf{u}_h)_t,\mathbf{\phi}_h)-b\big((\mathbf{u}-\mathbf{u}_h)_t,\mathbf{u}-\mathbf{u}_{h},\mathbf{\phi}_h\big)\nonumber\\
&-b\big(\mathbf{u}-\mathbf{u}_h,(\mathbf{u}-\mathbf{u}_{h})_t,\mathbf{\phi}_h\big)+b\big(\mathbf{u}_t,\mathbf{u}-\mathbf{u}_{h},\mathbf{\phi}_h\big)\nonumber\\
&+b\big(\mathbf{u},(\mathbf{u}-\mathbf{u}_{h})_t,\mathbf{\phi}_h\big)+b\big((\mathbf{u}-\mathbf{u}_h)_t,\mathbf{u},\mathbf{\phi}_h\big)+b\big(\mathbf{u}-\mathbf{u}_h,\mathbf{u}_t,\mathbf{\phi}_h\big)=0.\label{ep5.2s}
\end{eqnarray}
We set $(\mathbf{\phi}_h,q_h)=(2\mathbf{e}_{ht},\rho_hp_t-p_{ht})$ in (\ref{ep5.2s}), split $\mathbf{u}-\mathbf{u}_h=(\mathbf{u}-P_h\mathbf{u})+\mathbf{e}_{h}$ with $\mathbf{e}_{h}=P_h\mathbf{u}-\mathbf{u}_h$, and note $a((\mathbf{u}-\mathbf{u}_h)(0),\mathbf{e}_{ht})=0$, , $b(\cdot,\mathbf{e}_{ht},\mathbf{e}_{ht})=0$ to get
\begin{align}
&\frac{d}{dt}\|\mathbf{e}_{ht}\|_0^2+2\mu |\mathbf{e}_{ht}|_1^2+2J(t;\mathbf{e}_{ht},\mathbf{e}_{ht})=-2\mu a(\mathbf{u}_{t}-P_h\mathbf{u}_{t},\mathbf{e}_{ht})+2d(\mathbf{e}_{ht},(p-\rho_hp)_t)\nonumber\\
&-2J(t;\mathbf{u}_{t}-P_h\mathbf{u}_{t},\mathbf{e}_{ht})+2b\big((\mathbf{u}-P_h\mathbf{u})_t,\mathbf{u}-\mathbf{u}_{h},\mathbf{e}_{ht}\big)+2b\big(\mathbf{e}_{ht},\mathbf{u}-\mathbf{u}_{h},\mathbf{e}_{ht}\big)\nonumber\\
&+2b\big(\mathbf{u}-\mathbf{u}_h,(\mathbf{u}-P_h\mathbf{u})_t,\mathbf{e}_{ht}\big)+2b(\mathbf{u}_t,\mathbf{u}-\mathbf{u}_{h},\mathbf{e}_{ht})-2b\big(\mathbf{u},(\mathbf{u}-P_h\mathbf{u})_t,\mathbf{e}_{ht}\big)\nonumber\\
&-2b\big((\mathbf{u}-P_h\mathbf{u})_t,\mathbf{u},\mathbf{e}_{ht}\big)-2b\big(\mathbf{e}_{ht},\mathbf{u},\mathbf{e}_{ht}\big)+2b(\mathbf{u}-\mathbf{u}_h,\mathbf{u}_t,\mathbf{e}_{ht})=\sum_{i=1}^{11}I(i).\label{ep5.2sss}\end{align}
We use (\ref{eq4P2}), (\ref{eq4.10}), (\ref{eq4.1}), (\ref{eq2.03}) and (\ref{eq2.04}) to get
\begin{align*}
&|I(1)|\leq 2\mu |\mathbf{e}_{ht}|_1|\mathbf{u}_{t}-P_h\mathbf{u}_{t}|_1\leq \frac{16}{\mu}h^2\|A\mathbf{u}_{t}\|_0^2+\frac{\mu}{16}|\mathbf{e}_{ht}|_1^2,\nonumber\\
&|I(2)|\leq 2|\mathbf{e}_{ht}|_1\|(p-\rho_hp)_t\|_0\leq \frac{16}{\mu}h^2\|p_t\|_1^2+\frac{\mu}{16}|\mathbf{e}_{ht}|_1^2,\nonumber\\
&|I(4)+I(6)|\leq 4c_0|(\mathbf{u}-P_h\mathbf{u})_t|_{1}|\mathbf{u}-\mathbf{u}_h|_{1}|\mathbf{e}_{ht}|_1\leq\frac{32c_0^2h^2}{\mu}\|A\mathbf{u}_t\|_0^2|\mathbf{u}-\mathbf{u}_h|_{1}^2+\frac{\mu}{16}|\mathbf{e}_{ht}|_1^2,\nonumber\\
&|I(5)|\leq c_0|\mathbf{u}-\mathbf{u}_h|_{1}|\mathbf{e}_{ht}|_{1}|\mathbf{e}_{ht}|_1\leq \frac{16c_0^2h^{-2}}{\mu} |\mathbf{u}-\mathbf{u}_h|_{1}^2\|\mathbf{e}_{ht}\|_{0}^2+\frac{\mu}{16}|\mathbf{e}_{ht}|_1^2,\nonumber\\
&|I(7)+I(11)|\leq 4c_0|\mathbf{u}-\mathbf{u}_{h}|_1|\mathbf{u}_t|_1|\mathbf{e}_{ht}|_1\leq \frac{32c_0^2}{\mu}|\mathbf{u}-\mathbf{u}_{h}|_1^2|\mathbf{u}_t|_1^2+\frac{\mu}{16}|\mathbf{e}_{ht}|_1^2,\nonumber\\
&|I(8)+I(9)|\leq 2c_0\|A\mathbf{u}\|_0(|\mathbf{e}_{ht}|_1\|(\mathbf{u}-P_h\mathbf{u})_t\|_0+\|(\mathbf{u}-P_h\mathbf{u})_t\|_0^{\frac{1}{2}}|(\mathbf{u}-P_h\mathbf{u})_t|_1^{\frac{1}{2}}\|\mathbf{e}_{ht}\|_0)\nonumber\\
&\leq \frac{32c_0^2}{\mu} h^3\|A\mathbf{u}_t\|_0^2\|A\mathbf{u}\|_0^2+\frac{\mu}{16}\|\mathbf{e}_{ht}\|_1^2,\nonumber\\
&|I(10)|\leq 2c_0\|\mathbf{e}_{ht}\|_0^{\frac{1}{2}}|\mathbf{e}_{ht}|_1^{\frac{1}{2}}|\mathbf{u}|_1^{\frac{1}{2}}\|A\mathbf{u}\|_0^{\frac{1}{2}}\|\mathbf{e}_{ht}\|_0\leq 2^{\frac{4}{3}}c_0^{\frac{4}{3}}\mu^{-\frac{1}{3}}|\mathbf{u}|_1^{\frac{2}{3}}\|A\mathbf{u}\|_0^{\frac{2}{3}}\|\mathbf{e}_{ht}\|_0^2+\frac{\mu}{16}|\mathbf{e}_{ht}|_1^2.\nonumber
\end{align*}
We combine the above estimates with (\ref{ep5.2sss}), multiply this equation by $e^{2\alpha t}$, integrate the resulting equation from $0$ to $t$, and apply
\begin{align*}
2\int_0^te^{2\alpha s}J(s;(P_h\mathbf{u}-\mathbf{u})_{s},\mathbf{e}_{hs})ds\leq\frac{\mu}{16}\int_0^te^{2\alpha s}|\mathbf{e}_{hs}|_1^2ds+c^\dagger h^2\int_0^{t}e^{2\alpha s}\|A\mathbf{u}_s\|_0^2ds
\end{align*}
with $c^\dagger=\frac{16\rho^2\Gamma^2(1-\beta)}{\mu[\delta(\delta-2\alpha)]^{1-\beta}}$ (cf. Lemma \ref{l3.1}),   $\mathbf{e}_{ht}(0)=\mathbf{0}$ from (\ref{eq3.1}), (\ref{eq3.4}) and (\ref{eq4.8}) by conditions of $\mathbf{u}(\mathbf{x},0)=\mathbf{0}$ and $\mathbf{f}(\mathbf{x},0)=\mathbf{0}$, Lemma \ref{L2.1}, Lemma \ref{TRR1}, Theorem \ref{TRR} and $\frac{\mu}{2} e^{2\alpha t}|\mathbf{e}_{ht}|_1^2\geq2\alpha e^{2\alpha t}\|\mathbf{e}_{ht}\|_0^2$ to get
\begin{align}
&e^{2\alpha t}\|\mathbf{e}_{ht}\|_0^2+\mu\int_0^te^{2\alpha s}|\mathbf{e}_{hs}|_1^2ds\leq \kappa h^2\int_0^{t}e^{2\alpha s}\|A\mathbf{u}_s\|_0^2ds+\frac{16}{\mu}h^2\int_0^{t}e^{2\alpha s}\|p_s\|_1^2ds\nonumber\\
&+\kappa h^2\int_0^{t}e^{2\alpha s}|\mathbf{u}_s|_1^2ds+\int_0^{t}(\kappa h^{-2}|\mathbf{u}-\mathbf{u}_h|_{1}^2+\kappa|\mathbf{u}|_1^{\frac{2}{3}}\|A\mathbf{u}\|_0^{\frac{2}{3}})e^{2\alpha s}\|\mathbf{e}_{hs}\|_{0}^2ds\nonumber\\
&\leq\kappa h^2+\int_0^{t}g(s)e^{2\alpha s}\|\mathbf{e}_{hs}\|_{0}^2ds,\label{eq5.207}
\end{align}
where we have used Lemma \ref{TRR1} and Theorem \ref{thmf} to bound
\begin{align}
g(t)=\kappa h^{-2}|\mathbf{u}-\mathbf{u}_h|_{1}^2+\kappa\|A\mathbf{u}\|_0^{\frac{4}{3}}\leq\kappa (e^{-2\alpha t}+e^{-\frac{4}{3}\alpha t}).\nonumber
\end{align}
 We then apply
\begin{align}
\int_0^{t}g(s)ds\leq\kappa \int_0^{t}(e^{-2\alpha s}+e^{-\frac{4}{3}\alpha s})ds\leq\frac{5}{4\alpha}\kappa,\nonumber
\end{align}
and Lemma \ref{L4.2G} in (\ref{eq5.207}) to get
\begin{align}
e^{2\alpha t}\|\mathbf{e}_{ht}\|_0^2\leq \kappa h^2\exp(\int_0^{t}g(s)ds) \leq \kappa h^2\exp(\frac{5}{4\alpha}\kappa) \leq \kappa h^2.\label{eq5.9}
\end{align}
We multiply (\ref{eq5.9}) by $e^{-2\alpha t}$ and combine this estimate with $\|\mathbf{u}_{t}-P_h\mathbf{u}_{t}\|_0^2\leq ch^2|\mathbf{u}_{t}|_1^2\leq ch^2e^{-2\alpha t}$ (cf. (\ref{eq4P1}) and Theorem \ref{TRR}) to get
\begin{align}
&\|(\mathbf{u}-\mathbf{u}_h)_t\|_0^2\leq c\|\mathbf{e}_{ht}\|_0^2+c\|\mathbf{u}_{t}-P_h\mathbf{u}_{t}\|_0^2\leq\kappa h^2e^{-2\alpha t}, \quad  t\geq 0.\label{eq5.8}
\end{align}

We set $q_h=0$ in (\ref{ep5.2}) and use (\ref{eq5.8}), Theorem \ref{thmf}, Lemma \ref{TRR1} and Theorem \ref{TRR}  to get for $\mathbf{\phi}_h\in X_h$
\begin{align}
&\frac{d(\mathbf{\phi}_h,\rho_hp-p_h)}{|\mathbf{\phi}_h|_1}\leq\|(\mathbf{u}-\mathbf{u}_{h})_t\|_{0}+\mu|\mathbf{u}-\mathbf{u}_h|_1+\rho\mathcal Q_{\beta,\delta}*|\mathbf{u}-\mathbf{u}_h|_1+c_0|\mathbf{u}-\mathbf{u}_h|_1^2\nonumber\\
&+c_0|\mathbf{u}|_1|\mathbf{u}-\mathbf{u}_{h}|_1+\|p-\rho_hp\|_0\leq\kappa he^{-\alpha t}+\kappa h e^{-\alpha t}\mathcal Q_{\beta,(\delta-\alpha)}*1\leq\kappa he^{-\alpha t}.\label{eq5.7}
\end{align}
We then apply the triangle inequality, (\ref{eq4.1}) and the inf-sup inequality to get
$$
\|p-p_h\|_0\leq c\|p-\rho_hp\|_0+c\|\rho_hp-p_h\|_0\leq c\|p-\rho_hp\|_0+c\displaystyle{\sup_{0\neq\mathbf{\phi}_h\in X_h}}\frac{d(\mathbf{\phi}_h,\rho_hp-p_h)}{|\mathbf{\phi}_h|_1}.$$
Combining this with (\ref{eq5.7}), we complete the proof.
\end{proof}

\section{Numerical tests and simulations}\label{sec7}

We perform numerical experiments to verify the error estimates and to investigate the behavior of the model by the benchmark problem of planar four-to-one contraction flow. In computations, we use the Euler scheme and the sum-of-exponential (SOE) method \cite{JiaZha} to approximate $\mathbf{u}_t$ and the convolution term in (\ref{eq3.1}), respectively, with the time step size $\tau$.
 The mini-element is used for spacial discretization.

\subsection{Error accuracy and decay}
Let $\Omega=[0,1]^2$  and we set exact solutions as
$\mathbf{u}(\mathbf{x},t)=(10x_1^2(x_1-1)^2x_2(x_2-1)(2x_2-1)e^{-\delta t},-10x_1(x_1-1)(2x_1-1)x_2^2(x_2-1)^2e^{-\delta t})$ and $p(\mathbf{x},t)=10(2x_1-1)(2x_2-1)e^{-\delta t}.$
The body force $\mathbf{f}(\mathbf{x},t)$ is accordingly evaluated.
To test the spatial accuracy of the scheme, we set $\tau=2\times10^{-5}$, $\beta=0.5$, $\rho=16$, $\delta=10$ and $\mu=1$, and the terminal time $T=1$. The results in Table \ref{t3} show that the spatial convergence rates are consistent with theoretical results in Theorem \ref{thmf} and Theorem \ref{t6.1}. To test the exponential decay of the errors in time, we set $\tau=1\times10^{-4}$, $\beta=0.5$, $\rho=16$, $\delta=10$ and $\mu=1$. The results are given in Fig.~\ref{p1}, which indicate the exponential decay of the errors of velocity and pressure.

\begin{table}[h]
\caption{Errors and convergent rates of velocity and pressure.}\label{t3}
\centering
\setlength{\tabcolsep}{2mm}{}
\begin{tabular}{ c c c c c c c}
\hline
$1/h$ & $||\mathbf{u}-\mathbf{u}_h||_0$ & Rate & $||\mathbf{u}-\mathbf{u}_h||_1$ & Rate & $||p-p_h||_0$ & Rate\\
\hline
  4 & 6.7877e-07 &  & 7.9733e-06 &  & 1.8832e-04 &  \\
  8 & 2.0182e-07 & 1.7499 & 4.3018e-06 & 0.8902 & 8.4846e-05 & 1.1502 \\
  16 & 5.0692e-08 & 1.9932 & 2.1508e-06 & 1.0001 & 2.8871e-05 & 1.5552 \\
  32 & 1.2439e-08 & 2.0269 & 1.0693e-06 & 1.0082 & 9.7841e-06 & 1.5611 \\
  64 & 2.9767e-09 & 2.0631 &5.3262e-07 & 1.0055 & 3.4015e-06 & 1.5243\\
\hline
\end{tabular}
\end{table}

\begin{figure}[h]
\centering
{\includegraphics[scale=0.35]{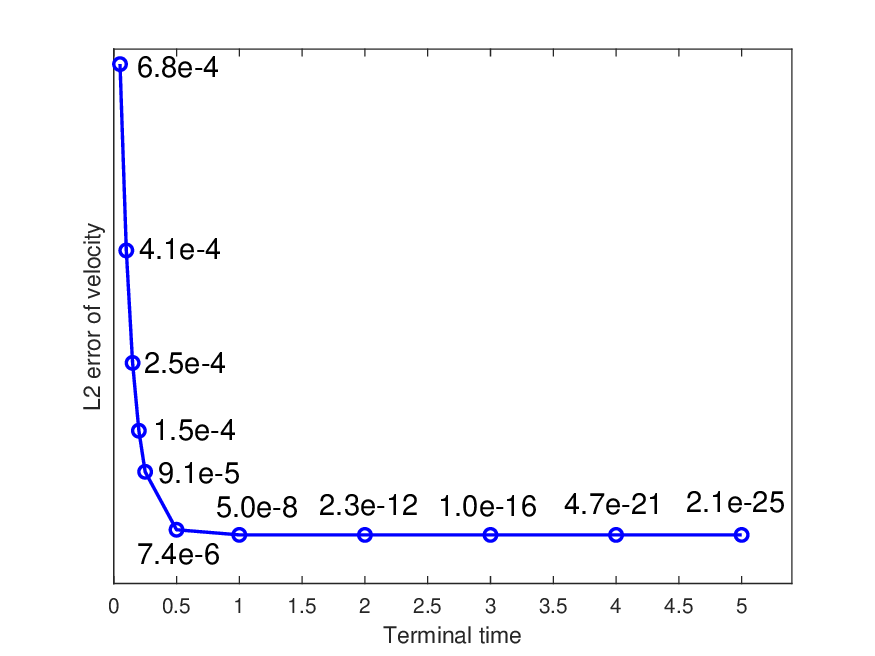}}
{\includegraphics[scale=0.35]{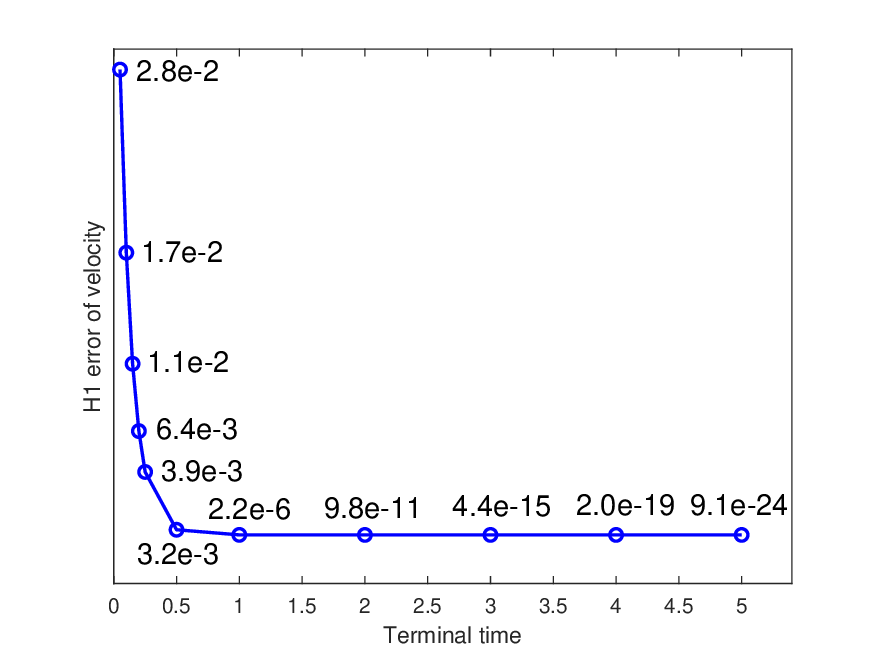}}
{\includegraphics[scale=0.35]{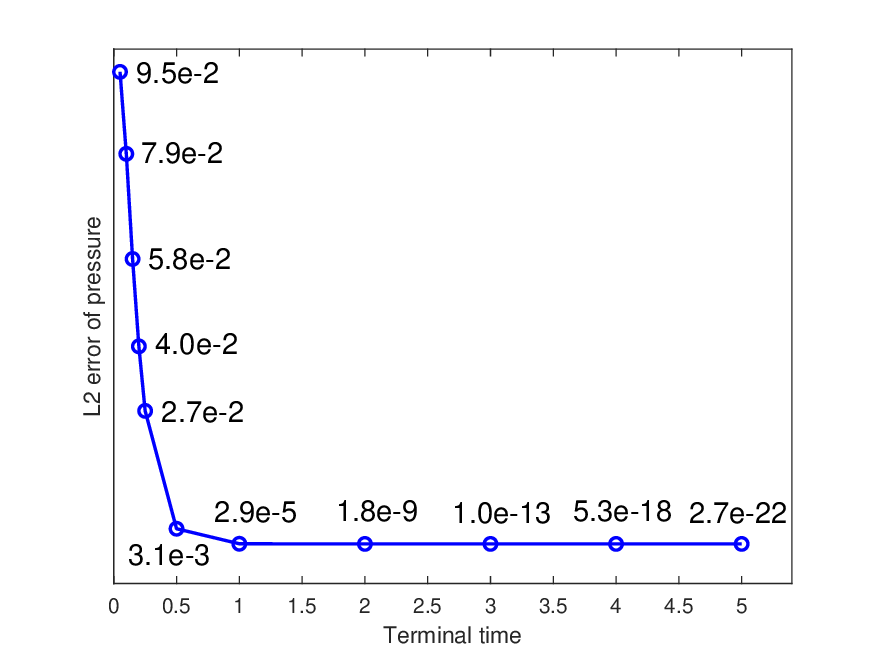}}
\caption{Exponential decay of $L^2$ (left) and $H^1$ (middle) errors of velocity and $L^2$ error of pressure (right).}\label{p1}
\end{figure}

\subsection{Simulation of planar four-to-one contraction flow}
The benchmark problem of planar four-to-one contraction flow \cite{Bagley,Tordella} is considered to reflect the influence of the new parameter $\beta$ on the flow behavior at the sudden contraction entrance. Due to the symmetry, we exhibit half of the original domain in Fig. \ref{p2}(a), where the widths of left (inflow) and right (outflow) stream channels are $4$ and $1$, respectively. The lengths of upstream and downstream are $20$ and $30$, respectively.

The flow boundary condition $\mathbf{u}=(\frac{3}{8}(1-(\frac{4-y}{4})^2), 0)$ is prescribed at inflow boundary (i.e. the left boundary in Fig. \ref{p2}(a)), and we impose $u_2=0$ at the outflow boundary (i.e. the right boundary in Fig. \ref{p2}(a)). The zero boundary conditions are imposed on other parts of the boundary of the original domain. We set $\mu=1$, $\rho=16$, $\delta=10$, and $\tau=0.001$. In order to accurately calculate the flow at salient corner, an refined grid near the corner is used, cf. Fig.~\ref{p2}(a). As $\beta$ is an important parameter to measure the viscoelasticity of fluid, we solve model (\ref{eq3.1}) to plot the streamlines and the contours of $u_1$ under different $\beta$.

\begin{figure}[h]
\centering
\subfloat[Mesh]
{\includegraphics[scale=0.35]{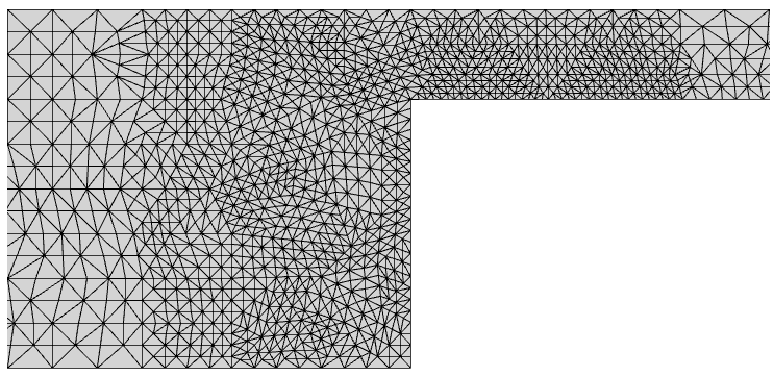}} \hspace{1mm}
\subfloat[Model (\ref{eq3.1}) with $\rho=0$ (Navier-Stokes equation)]
{\includegraphics[scale=0.35]{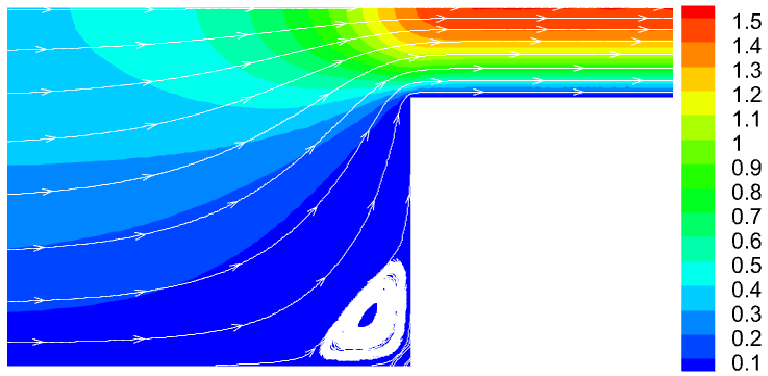}} \hspace{1mm}
\subfloat[Model (\ref{eq3.1}) with $\rho=16$ and $\beta=0$ (Oldroyd’s model)]
{\includegraphics[scale=0.35]{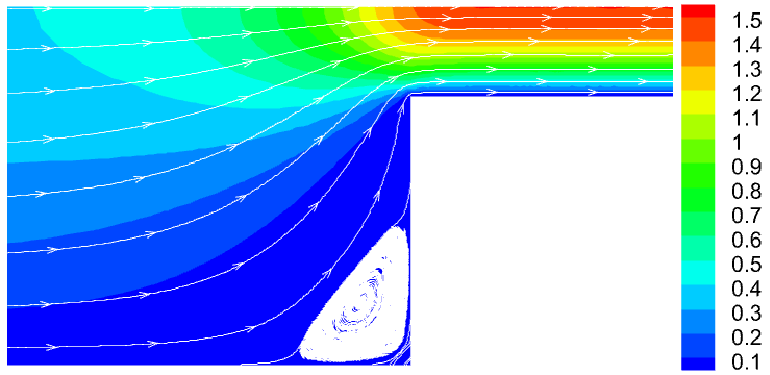}}\\
\subfloat[Model (\ref{eq3.1}) with $\rho=16$ and $\beta=0.25$]
{\includegraphics[scale=0.35]{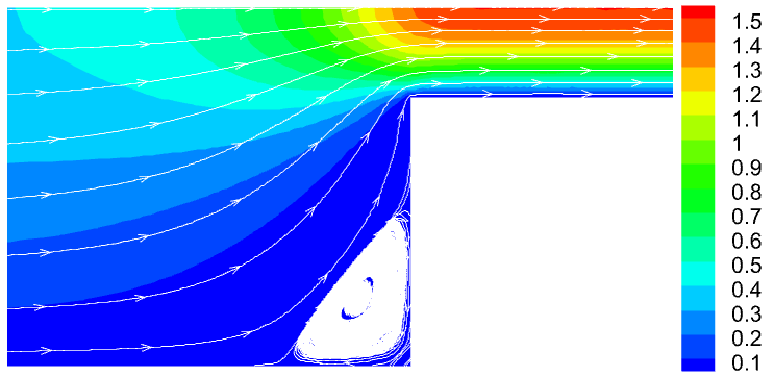}} \hspace{1mm}
\subfloat[Model (\ref{eq3.1}) with $\rho=16$ and $\beta=0.5$]
{\includegraphics[scale=0.35]{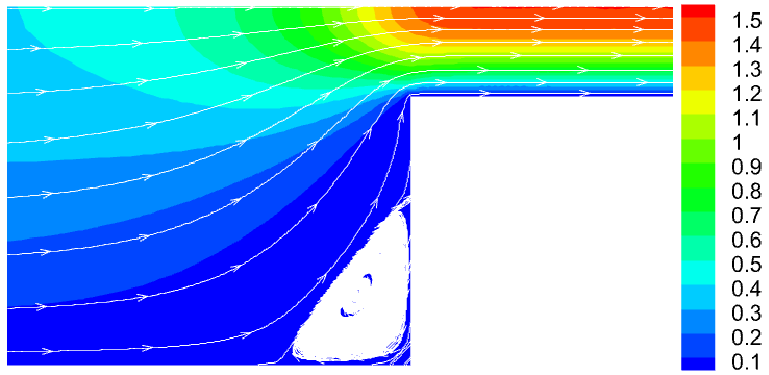}} \hspace{1mm}
\subfloat[Model (\ref{eq3.1}) with $\rho=16$ and $\beta=0.75$]
{\includegraphics[scale=0.35]{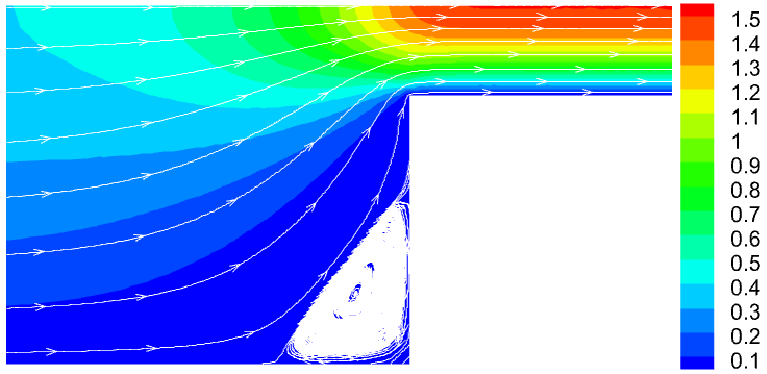}}
\caption{Streamlines (white lines) and contours for $u_1$.}\label{p2}
\end{figure}

Fig.~\ref{p2} show that under same Reynolds number, the polymer melt or polymer solution governed by model (\ref{eq3.1}) with $\rho>0$ and $\beta>0$ would form a larger gyratory flow phenomenon in the corner at front of the sudden contraction inlet compared with those governed by the Navier-Stokes equation (i.e. model (\ref{eq3.1}) with $\rho=0$) and the Oldroyd’s model of the viscoelastic fluid (i.e. model (\ref{eq3.1}) with $\rho>0$ and $\beta=0$), and the range of such gyratory flow increases with the increment of $\beta$. It has been pointed out in, e.g. \cite{Bagley,Tordella}, that the increment of elasticity of the fluid will promote the occurrence of the gyratory flow phenomenon. Thus, compared with the Navier-Stokes equation and the Oldroyd’s model of the viscoelastic fluid model, the numerical experiments indicate that the newly-introduced power function factor further increases the elasticity of the fluid. Consequently, (\ref{eq3.1}) provides a more general viscoelastic flow model that could accommodate more complex circumstances and thus deserves further investigations.



\section*{Funding}
This work was partially supported by the National Natural Science Foundation of China (Nos. 12401520 and 12301555), the Taishan Scholars Program of Shandong Province (No. tsqn202306083), the National Key R\&D Program of China (No. 2023YFA1008903), and the Postdoctoral Fellowship Program of CPSF (No. GZC20240938).


\end{document}